\tikzstyle{vertex}=[draw, fill=gray, circle, minimum size={0.16cm}, inner sep=0cm, font=\bf, align=center, scale=0.88]
\tikzstyle{vertex}=[circle, fill, scale=0.5]
\theoremstyle{definition}
\newtheorem{thm}{Theorem}
\newtheorem{lem}{Lemma}
\newtheorem{prop}{Proposition}
\begin{document}

\title{HIST-Critical Graphs and Malkevitch's Conjecture}

\author{
Jan Goedgebeur\footnote{Department of Computer Science, KU Leuven Kulak, 8500 Kortrijk, Belgium}\;\footnote{Department of Mathematics, Computer Science and Statistics, Ghent University, 9000 Ghent, Belgium}\;,
Kenta Noguchi\thanks{Department of Information Sciences,
Tokyo University of Science,
2641 Yamazaki, Noda, Chiba 278-8510, Japan}\,, Jarne Renders\footnotemark[1]\;, Carol T. Zamfirescu\footnotemark[2]\;\footnote{Department of Mathematics, Babe\c{s}-Bolyai University, Cluj-Napoca, Roumania}\;\footnote{E-mail addresses: jan.goedgebeur@kuleuven.be; noguchi@rs.tus.ac.jp; jarne.renders@kuleuven.be; czamfirescu@gmail.com}
}

\date{}
\maketitle

\noindent
\begin{abstract}
In a given graph, a \textit{HIST} is a spanning tree without 2-valent vertices. Motivated by developing a better understanding of \textit{HIST-free} graphs, i.e.\ graphs containing no HIST, in this article's first part we study \textit{HIST-critical} graphs, i.e.\ HIST-free graphs in which every vertex-deleted subgraph does contain a HIST (e.g.\ a triangle). We give an almost complete characterisation of the orders for which these graphs exist and present an infinite family of planar examples which are 3-connected and in which nearly all vertices are 4-valent. This leads naturally to the second part in which we investigate planar 4-regular graphs with and without HISTs, motivated by a conjecture of Malkevitch, which we computationally verify up to order~$22$. First we enumerate HISTs in antiprisms, whereafter we present planar 4-regular graphs with and without HISTs, obtained via line graphs. Finally, we confirm Malkevitch's conjecture for the family of line graphs of cyclically $4$-edge connected cubic graphs.
\end{abstract}

\noindent
\textbf{Keywords.}
Homeomorphically irreducible spanning tree, planar graph, computation

\medskip
\noindent
\textbf{MSC 2020.}
05C05, 05C10, 05C30

\section{Introduction}

In a given graph, a spanning tree without 2-valent vertices is called a \emph{HIST}, an abbreviation of \emph{homeomorphically irreducible spanning tree}. A graph not containing a HIST is \emph{HIST-free}. HIST-free graphs play an important role in the theory of these spanning trees, see for instance the work of Albertson, Berman, Hutchinson, and Thomassen~\cite{ABHT90}, and many fundamental questions remain unanswered. We will call a graph $G$ \textit{$K_1$-histonian} 
if every vertex-deleted subgraph of $G$ contains a HIST. In this article our aim is to investigate HIST-freeness from two perspectives: in the first part we focus on \textit{HIST-critical} graphs, i.e.\ HIST-free $K_1$-histonian graphs, e.g.\ $K_3$. In the second part we study Malkevitch's Conjecture stating that planar 4-connected graphs must contain a HIST. We point out that the question whether a graph contains a HIST or not has been intensely investigated, see for instance \cite{CRS12,CS13,HNO18,NT18}.

We recall that a cycle in a graph is \textit{hamiltonian} if it visits every vertex of the graph, and a graph is \textit{hamiltonian} if it contains a hamiltonian cycle. So in a given graph a hamiltonian cycle is a connected spanning subgraph in which \textit{every} vertex has degree~2, while a HIST is a connected spanning subgraph in which \textit{no} vertex has degree~2. Just like the conjecture of Malkevitch~\cite{Ma79} stating that every planar 4-connected graph contains a HIST aims at establishing a HIST-analogue of Tutte's celebrated theorem that planar 4-connected graphs are hamiltonian~\cite{Tu56}, much of the work here is motivated by the desire to better understand in which cases hamiltonian cycles and hamiltonicity-related concepts behave like their HIST counterparts, and in which cases they do not (and, of course, why this is so). We remark that the notion ``HIST-critical'' has a hamiltonian counterpart in ``\textit{hypohamiltonian}'': these are non-hamiltonian graphs in which every vertex-deleted subgraph is hamiltonian. 



Throughout the article we will use a combination of theoretical and computational arguments. Therefore, in Section~\ref{sec:algo} we first present the algorithm we used to test whether or not a graph is HIST-free and to count its number of HISTs if it is not. In Section~\ref{sec:hist-critical} we focus on HIST-critical graphs and give an almost complete characterisation of the orders for which these graphs exist and present an infinite family of planar examples which are 3-connected and in which nearly all vertices are 4-valent. In Section~\ref{sec:malkevitch} we prove a series of results motivated by Malkevitch's Conjecture. We show by computational means that it holds up to at least $22$ vertices; determine the minimum number of HISTs in planar 4-connected graphs on at most $18$ vertices; prove that antiprisms of order $2k$ with $k \geq 3$ have exactly $2k(2k-2)$ HISTs; and show that there exist 4-connected HIST-free graphs of genus 3. 

We shall use the notation $[n] := \{ 0, \ldots, n \}$. Unless specified otherwise, all graphs in this paper
are simple; while \emph{multigraphs} can have parallel edges (but no loops). For a graph $G$, the \emph{degree} of a vertex $v$ is the number of incident edges of $v$, denoted by $d_G(v)$. We call a vertex of degree $k$ a \textit{$k$-vertex}.


\section{Algorithm for counting HISTs}
\label{sec:algo}

We implemented an efficient backtracking algorithm to test whether or not a graph is HIST-free and to count its number of HISTs if any are present. The main idea of our algorithm is a straightforward way of searching for the spanning trees of the graph $G$ by recursively adding edges to a tree $T$ and forbidding edges from being added. These forbidden edges induce a subgraph of $G$, say $G'$. 

Given some subtree $T$ and subgraph $G'$ of our graph, we find a vertex $v$ for which $d_G(v) - d_{G'}(v) - d_T(v)$ is non-zero but minimal and which contains a neighbour $w$ such that $vw\not\in E(G')$ and $vw\not\in E(T)$. Let $w$ be this neighbour for which $d_G(w) - d_{G'}(w) - d_T(w)$ is minimal.
At this point we branch. We add $vw$ to $G'$, forbidding it from being added to the tree in this branch, and recurse. If $w$ does not already belong to $T$, we add $vw$ to $T$ and recurse.

A spanning tree is found if $|E(T)| = |V(G)| - 1$ and it is a HIST if there are no vertices of degree~$2$. We use certain additional elementary pruning criteria. For example, if for a vertex $v$ we have $d_G(v) = d_{G'}(v)$ or $d_G(v) - d_{G'}(v) = d_T(v) = 2$, we can prune the current branch.

Stopping the search once a HIST is found gives us an algorithm for checking whether a graph is HIST-free. This algorithm can then also be used to determine whether a graph is $K_1$-histonian or HIST-critical.


While the correctness of the algorithm can easily be proven, there is always the risk of errors in the implementation of the algorithm. To mitigate these we verified many of our results using an independent implementation of this algorithm as well as the implementation of a different backtracking algorithm, which starts from an initial spanning tree and alters it by adding an removing edges in such a way that we will have encountered all HISTs, similar to the algorithm of Kapoor and Ramesh~\cite{KR95} for spanning trees. For details on the verification, we refer the reader to Appendix~\ref{app:correctness}.

Our implementation of the algorithm is open source software and can be found on GitHub~\cite{GNRZ23} where it can be verified and used by other researchers.

\section{HIST-Critical Graphs}
\label{sec:hist-critical}

In the theory of hamiltonicity, so-called \textit{hypohamiltonian} graphs---non-hamiltonian graphs in which every vertex-deleted subgraph is hamiltonian---play a special role. The smallest such graph is the famous Petersen graph. The investigation of this class of graphs started in the sixties and results have appeared in a steady stream throughout the decades. In the beginning, it seemed that they were closely related to snarks---many snarks being hypohamiltonian graphs and vice-versa---but as more and more examples were described, it became clear that the families are quite different. In line with this early perceived similarity, Chv\'atal~\cite{Ch73} asked whether \textit{planar} hypohamiltonian graphs exist, and Gr\"unbaum~\cite{Gr74} conjectured their non-existence; recall that the non-planarity of snarks is equivalent to the Four Colour Theorem. Thomassen proved, using Grinberg's hamiltonicity criterion as an essential tool, that infinitely many planar hypohamiltonian graphs exist~\cite{Th76}. 

In this section, we look at the HIST-analogue of hypohamiltonian graphs, namely HIST-critical graphs. As mentioned in the introduction, these are HIST-free graphs in which every vertex-deleted subgraph does contain a HIST. We mirror the development in hypohamiltonicity theory in two ways: first we give a near-complete characterisation of orders for which HIST-critical graphs exist (this was completed for hypohamiltonian graphs by Aldred, McKay, and Wormald~\cite{AMW97}), and then show that infinitely many planar HIST-critical graphs exist, paralleling Thomassen's aforementioned result. 

The latter family consists of 3-connected graphs in which nearly all vertices are 4-valent. Our desire to find planar HIST-critical graphs with few cubic vertices is motivated as follows. Thomassen also proved the surprising structural result that every planar hypohamiltonian graph must contain a cubic vertex~\cite{Th78}. Note that this is equivalent to the statement that every planar graph with minimum degree at least 4 and in which every vertex-deleted subgraph is hamiltonian must itself be hamiltonian---this strengthens Tutte's celebrated theorem that planar 4-connected graphs are hamiltonian. It remains unknown whether every planar HIST-critical graph must contain a cubic vertex.

Clearly, $K_3$ is the smallest HIST-critical graph. The first question one can ask, in the spirit of establishing which parallels between HIST-critical and hypohamiltonian graphs hold and which do not, is whether there is a HIST-analogue of the Petersen graph. One key property of the Petersen graph is that it is 3-regular. We now give the easy proof of a fact that makes it impossible to find a suitable HIST-analogue of the Petersen graph.

\begin{prop}
\textit{In a graph of even order and maximum degree at most~$3$, there exists no vertex-deleted subgraph with a HIST. In particular, there are no $3$-regular HIST-critical graphs.}
\end{prop}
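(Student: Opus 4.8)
The plan is to derive a parity contradiction from a degree-sum count applied to a hypothetical HIST of a vertex-deleted subgraph. Let $G$ have even order $n$ and $\Delta(G) \le 3$, and suppose, for contradiction, that $G - v$ contains a HIST $T$ for some vertex $v$. Then $T$ is a spanning tree of $G-v$, hence has $n-1$ vertices and therefore $n-2$ edges, so its degree sum equals $2(n-2)$.

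The one substantive observation is a constraint on the degree sequence of $T$: since $T$ is a subgraph of $G$ we have $\Delta(T) \le 3$, and since $T$ is a HIST it has no $2$-vertex; hence every vertex of $T$ is a $1$-vertex or a $3$-vertex. Writing $a$ and $b$ for the numbers of $1$-vertices and $3$-vertices of $T$, counting vertices gives $a+b = n-1$ and counting edge-endpoints gives $a + 3b = 2(n-2)$. Subtracting yields $2b = n-3$, which is impossible because $n$ is even. (This tacitly assumes $n \ge 3$, so that $T$ is not a single vertex; $n=2$ is a degenerate case with no bearing on the second assertion.)

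For the ``in particular'' statement: a $3$-regular graph $G$ satisfies $\sum_{u \in V(G)} d(u) = 3|V(G)|$, which forces $|V(G)|$ to be even, and of course $|V(G)| \ge 4$. By the first part, no vertex-deleted subgraph of $G$ contains a HIST, so $G$ is not $K_1$-histonian, and in particular not HIST-critical.

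I do not expect a genuine obstacle here: once the degree restriction on $T$ is noted, the contradiction is a one-line handshake-lemma computation. The only point warranting a moment's care is verifying that no degenerate tiny tree slips past the count, i.e.\ that the identity really forces $b \ge 1$ in the relevant range of $n$ --- but $T = K_2$ would force $n = 3$, excluded by parity, and $T = K_1$ would force $n = 2$, which is irrelevant to the claim about $3$-regular graphs.
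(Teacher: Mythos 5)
Your proof is correct and follows essentially the same route as the paper's: both arguments observe that a HIST in a graph of maximum degree $3$ has only $1$- and $3$-vertices and then derive a parity contradiction from the tree degree-sum identity (the paper phrases it as ``the order of any such HIST must be even,'' you phrase it as $2b = n-3$). The handling of the $3$-regular case via the handshake lemma is likewise the intended argument.
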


\begin{proof}
Any HIST $T$ of a graph $H$ of maximum degree at most 3 contains only 1- and 3-vertices. The difference between the number of 1- and 3-vertices present in $T$ must be 2. So the order of $T$ and thus of $H$ must be even. But the graph $G$ from the statement is required to have even order, so its vertex-deleted subgraphs must have odd order.
\end{proof}

To obtain other examples---in particular, in light of the above observation, examples of \textit{even} order---, we used \texttt{geng}~\cite{MP14} to exhaustively generate general $2$-connected graphs and used our algorithm from Section~\ref{sec:algo} to test which of the generated graphs are HIST-critical. Note that HIST-critical graphs are $2$-connected. The results are summarised in Table~\ref{tab:2-conn_HIST-critical}.
The table shows the existence of several HIST-critical graphs other than $K_3$, but none of even order. See Figure~\ref{fig:5_smallest_HIST-critical} in the Appendix for illustrations of the five smallest HIST-critical graphs. In the hope of finding more examples, we also computed HIST-critical graphs under girth restrictions, as this allowed us to look at higher orders. See Figure~\ref{fig:HIST-critical_girth_restrictions} in the Appendix for a HIST-critical graph with girth $4, 5, 6$ and $7$.
The results can also be found in Table~\ref{tab:2-conn_HIST-critical}. All graphs from this table are available on the \textit{House of Graphs}~\cite{CDG23} at \url{https://houseofgraphs.org/meta-directory/hist-critical}.
Now we did find examples of even order. We shall now prove that there are in fact infinitely many such graphs. 

\begin{table}[H]
    \centering
    \begin{tabular}{c | r | r | r | r | r}
        Order & $h(n)$ & $h(4,n)$ & $h(5,n)$ & $h(6,n)$ & $h(7,n)$\\\hline
        3 & 1 & 0 & 0 & 0 & 0\\
        4, 5, 6 & 0 & 0 & 0 & 0 & 0\\
        7 & 2 & 0 & 0 & 0 & 0\\
        8 & 0 & 0 & 0 & 0 & 0\\
        9 & 2 & 0 & 0 & 0 & 0\\
        10 & 0 & 0 & 0 & 0 & 0\\
        11 & 35 & 3 & 1 & 0 & 0\\
        12 & 0 & 0 & 0 & 0 & 0\\
        13 & 153 & 6 & 2 & 0 & 0\\
        14 & ? & 1 & 1 & 0 & 0\\
        15 & ? & 149 & 25 & 0 & 0\\
        16 & ? & 3 & 0 & 0 & 0\\
        17 & ? & ? & 244 & 0 & 0\\
        18 & ? & ? & 1 & 0 & 0\\
        19 & ? & ? & 4\,129 & 4 & 0\\
        20 & ? & ? & 3 & 1 & 0\\
        21 & ? & ? & ? & 98 & 0\\
        22 & ? & ? & ? & 0 & 0\\
        23 & ? & ? & ? & 6\,036 & 0\\
        24 & ? & ? & ? & 52 & 0\\
        25, 26 & ? & ? & ? & ? & 0\\
        27 & ? & ? & ? & ? & 8\\
    \end{tabular}
    \caption{Exact counts of HIST-critical graphs with a given lower bound on the girth. Column $h(k,n)$ gives the number of HIST-critical graphs on $n$ vertices and with girth at least $k$. We put $h(n) := h(3,n)$.}
    \label{tab:2-conn_HIST-critical}

\end{table}

During our search, we noticed that, given a cubic graph (with girth restrictions), one sometimes obtains a HIST-critical graph by subdividing the proper edges. See for example the graph of Figure~\ref{fig:pappus} in the Appendix. It is a HIST-critical graph of girth $6$ obtained by subdividing the Pappus graph in three places. 

Using this observation and starting from cubic graphs of girth equal to $8$ or $9$, we were able to find HIST-critical graphs of girth $8$ of order $37, 39$ and $41$, and of girth $9$ of order $59$ in a non-exhaustive way. An example of such a girth $8$ graph of order $41$ is available at \url{https://houseofgraphs.org/graphs/50549} and an example of such a girth $9$ graph of order $59$ is available at \url{https://houseofgraphs.org/graphs/50547}. Their existence will be used in the proof of Theorem~\ref{orders}.

\subsection{HIST-Critical fragments}
\label{sec:hist-crit-fragments}



Ultimately, our goal is a HIST-analogue of the result of Aldred, McKay, and Wormald~\cite{AMW97} stating that there exists a hypohamiltonian graph of order $n$ if and only if $n \in \{ 10, 13, 15, 16 \}$ or $n \ge 18$. Although our characterisation given in Section~\ref{sec:charcterisation} is not complete, only few orders shall remain open.

Let $F$ be a graph with $V(F) = \{x, y, v_1, \ldots, v_\ell\}$ where $\ell \ge 1$. For a given connected subgraph $H$ of $F$, we call a spanning tree (spanning forest) $\Upsilon$ of $H$ an \emph{$\{ x,y \}$-excluded HIST} (\emph{$\{ x,y \}$-excluded HISF)} if $d_\Upsilon(v) \ne 2$ for all $v \in V(H) \setminus \{ x, y \}$. A graph $F$ will be called a \textit{HIST-critical $\{ x,y \}$-fragment} if it satisfies all of the following properties. 
\begin{enumerate}
	\item
	$F$ has an $\{ x,y \}$-excluded HIST. Moreover, every $\{ x,y \}$-excluded HIST $T$ of $F$ satisfies $d_T(x) = d_T(y) = 2$; 
	\item \label{prop:vertex_deleted_have_xy_excl_HIST}
	$F - x$ has an $\{ x,y \}$-excluded HIST with $d_F(y) \ne 1$ and $F - y$ has an $\{ x,y \}$-excluded HIST with $d_F(x) \ne 1$;
	\item
	for every $v \in V(F) \setminus \{ x, y \}$, the graph $F - v$ either has
	(a) an $\{ x,y \}$-excluded HIST with at least one of $x$ and $y$ of degree $\ne 2$, or 
	(b) an $\{ x,y \}$-excluded HISF consisting of exactly two components $T_x$ and $T_y$, each on at least two vertices,
    such that $x \in V(T_x)$ and $y \in V(T_y)$; and
	\item
	$F$ does not have an $\{ x,y \}$-excluded HISF with property (3b) above. 
\end{enumerate}



\begin{thm}\label{fragment-chain}
\textit{Let $k \ge 2$ be an integer. For all $i \in [k - 1]$, consider pairwise disjoint HIST-critical $\{ x_i,y_i \}$-fragments $H_i$, and identify $y_i$ with $x_{i+1}$, indices mod~$k$. The resulting graph $G$ is HIST-critical.}
\end{thm}

\begin{proof} 
In this proof we see $H_i$ as a subgraph of $G$ for every $i \in [k-1]$. We first show that $G$ is $K_1$-histonian. By (1), for every $i \in [k-1]$ the graph $H_i$ contains an $\{ x_i,y_i \}$-excluded HIST $T_i$ satisfying $d_{T_i}(x_i) = d_{T_i}(y_i) = 2$. By (2), $H_0 - x_0$ has an $\{ x_0,y_0 \}$-excluded HIST $T'_0$ with $d_{T_0}(y_0) \ne 1$ and $H_{k-1} - y_{k-1}$ has an $\{ x_{k-1},y_{k-1} \}$-excluded HIST $T'_{k-1}$ with $d_{T_{k-1}}(x_{k-1}) \ne 1$. Then, since $x_0 = y_{k-1}$, the tree $T'_0 \cup \bigcup_{i=1}^{k-2} T_i \cup T'_{k-1}$ is a HIST of $G - x_0$. Finding a HIST of $G - v$ is analogous for any other vertex $v\in \{ x_i, y_i \}_{i \in [k-1]}$.

Consider $v \in V(H_0) \setminus \{ x_0, y_0 \}$. By (3), $H_0 - v$ either has
(a) an $\{ x_0,y_0 \}$-excluded HIST $S$ with at least one of $x_0$ and $y_0$ of degree $\ne 2$, or (b) an $\{ x_0,y_0 \}$-excluded HISF consisting of exactly two components $S_{x_0}$ and $S_{y_0}$, each on at least two vertices, such that $x_0 \in V(S_{x_0})$ and $y_0 \in V(S_{y_0})$. We first treat case~(a). We may assume without loss of generality $d_{S}(x_0) \ne 2$. Then $S \cup \bigcup_{i=1}^{k-2} T_i \cup T'_{k-1}$ is a HIST of $G - v$, where $T_1, \ldots, T_{k-2}, T'_{k-1}$ are defined as in the preceding paragraph. For case~(b), the tree $S_{y_0} \cup \bigcup_{i=1}^{k-1} T_i \cup S_{x_0}$ is a HIST of $G - v$.

We now show that $G$ is HIST-free. Assume $G$ does contain a HIST $T$. Put $T_i := T \cap H_i$. A \textit{HISF} shall be a disjoint union of HISTs. By construction and in particular by (1) (we should rule out why $\bigcup_{i=0}^{k-2} T_i \cup T'_{k-1}$ is not a HIST) there exists a $j \in [k-1]$ such that $T_i$ is a HIST for all $i \in [k-1] \setminus \{ j \}$ and $T_j$ is a HISF consisting of exactly two components, one containing $x_j$, the other containing $y_j$. A priori, one of these components might be isomorphic to $K_1$, but this is in fact impossible: every HIST of $T_i$ is an $\{ x_i, y_i \}$-excluded HIST of $T_i$, so by (1) the $T_i$-degrees of $x_i$ and $y_i$ must be 2, so single-vertex components in the aforementioned HISF cannot occur because this would signify the presence of a 2-vertex in $T$. Every HISF of $T_j$ is also an $\{ x_j, y_j \}$-excluded HISF of $T_j$. But now the existence of $T_j$ contradicts (4), so $G$ is HIST-free.
\end{proof}

First, we remark that the degree requirements on $x$ and $y$ in property~(\ref{prop:vertex_deleted_have_xy_excl_HIST}) are only necessary when $k=2$. 

Let $F_1$ and $F_2$ be graphs defined as follows; see also Figures~\ref{fig:3-1} and~\ref{fig:3-2} in the Appendix. Note that $F_1$ is the Petersen graph from which two adjacent vertices were removed.	
\begin{eqnarray*}
V(F_1) &=& \{ x, y, v_1, \ldots, v_6 \} \\
E(F_1) &=& \{ xv_3, xv_6, yv_1, yv_4, v_1v_2, v_1v_6, v_2v_3, v_3v_4, v_4v_5, v_5v_6 \} \\
V(F_2) &=& \{ x, y, v_1, \ldots, v_{10} \} \\
E(F_2) &=& \{ xv_1, xv_8, yv_6, yv_9, v_1v_2, v_1v_6, v_1v_7, v_2v_3, v_3v_4, v_3v_8, v_4v_5, v_4v_9, v_5v_6, v_6v_{10}, v_7v_9, v_8v_{10} \}
\end{eqnarray*}

\begin{prop}\label{fragments}
\textit{The graphs $F_1$ 
and $F_2$ 
are HIST-critical $\{ x,y \}$-fragments.}
\end{prop}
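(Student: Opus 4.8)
The plan is to verify the four defining conditions of a HIST-critical $\{x,y\}$-fragment for each of $F_1$ and $F_2$, exploiting the following structural observation: both graphs are a cycle $C$ through $x$ and $y$ together with a few extra $2$-valent vertices, each joined to exactly two vertices of $C$. For $F_1$ one may take $C = x\,v_3\,v_4\,y\,v_1\,v_6$ with extra vertices $v_2$ (joined to $v_1,v_3$) and $v_5$ (joined to $v_4,v_6$); for $F_2$ one may take $C = x\,v_1\,v_6\,y\,v_9\,v_4\,v_3\,v_8$ with extra vertices $v_2$ (joined to $v_1,v_3$), $v_5$ (joined to $v_4,v_6$), $v_7$ (joined to $v_1,v_9$) and $v_{10}$ (joined to $v_6,v_8$). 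Crucially, $x$ and $y$ are adjacent only to vertices of $C$.

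Every extra vertex has degree $2$ and differs from $x$ and $y$, hence is a leaf in every $\{x,y\}$-excluded HIST or HISF of $F_i$ (and of every $F_i-v$). Consequently such an object, restricted to $C$, is an acyclic spanning subgraph of $C$ --- that is, $C$ with some edges deleted --- and the whole object is recovered by re-attaching each extra vertex as a pendant to one of its two neighbours on $C$. This turns each of conditions (1)--(4) into a finite combinatorial problem about which cycle edges are deleted and where the pendants go. For condition (1): deleting two or more edges of $C$ disconnects it, so if $F_i$ has an $\{x,y\}$-excluded HIST $T$ then $T$ meets $C$ in a Hamiltonian path $P$ of $C$, i.e.\ $C$ minus a single edge $e$; since $x$ and $y$ receive no pendant, $d_T(x)=d_P(x)$ and $d_T(y)=d_P(y)$, and these equal $2$ exactly when $e$ is incident to neither $x$ nor $y$. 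A short check --- where the specific adjacencies of $F_1,F_2$ enter --- shows that if $e$ is incident to $x$ or to $y$, the degree constraints on the cubic (and, for $F_2$, the $4$-valent) vertices of $C$ together with the limited pendant choices cannot all be met; hence $d_T(x)=d_T(y)=2$ is forced, and exhibiting one valid $T$ settles existence.

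Conditions (2) and (3) are existence statements, dispatched by exhibiting explicit trees and forests: for (2), a spanning tree of $F_i-x$ (resp.\ $F_i-y$) in which $y$ (resp.\ $x$) is an interior vertex of the broken cycle, hence $2$-valent; for (3), for each $v\notin\{x,y\}$ either the HIST of alternative (a) or the two-component HISF of alternative (b). For $F_1$ this yields (a) for the cycle-vertices $v_1,v_3,v_4,v_6$ and (b) for the extra vertices $v_2,v_5$, and for $F_2$ the split is found the same way; the automorphisms of $F_i$ --- for $F_1$ the group generated by $(x\,y)(v_1\,v_3)(v_4\,v_6)$ and $(v_1\,v_4)(v_2\,v_5)(v_3\,v_6)$, for $F_2$ the reflection $(x\,y)(v_1\,v_6)(v_8\,v_9)(v_3\,v_4)(v_2\,v_5)(v_7\,v_{10})$ --- roughly halve the casework. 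Condition (4) is the remaining universal statement: a two-component $\{x,y\}$-excluded HISF has $|V(F_i)|-2$ edges, exactly one per extra vertex being a pendant, so its restriction to $C$ is $C$ minus exactly two edges, and those two edges must separate $x$ from $y$ on $C$; running through the few such pairs together with the forced pendant attachments shows every configuration leaves some cubic vertex of $C$ with degree $2$ (or violates the requirement of two components each on $\ge 2$ vertices), so no such HISF exists.

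The only genuine effort is in these finite case analyses for the universal conditions --- the ``moreover'' clause of (1) and condition (4) --- and the main nuisance is $F_2$, where the $4$-valent vertices $v_1$ and $v_6$ admit more pendant patterns than a cubic vertex, so the handshake/parity bookkeeping must be done carefully; I would cross-check every claimed impossibility and every exhibited tree against the implementation of Section~\ref{sec:algo}. Once Proposition~\ref{fragments} is established, Theorem~\ref{fragment-chain} applied to chains of copies of $F_1$ and $F_2$ yields HIST-critical graphs in a wide range of orders, and --- since $F_1$ and $F_2$ are planar --- is the source of the planar examples sought in this section.
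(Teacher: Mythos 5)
Your proposal is correct and follows essentially the same route as the paper: properties (2) and (3) are dispatched by explicit certificates (the paper uses figures), while the universal claims in (1) and (4) reduce to a finite case analysis driven by the observation that every $2$-valent vertex outside $\{x,y\}$ is forced to be a leaf. Your packaging of that case analysis via the Hamiltonian cycle $C$ --- so that an $\{x,y\}$-excluded HIST corresponds to $C$ minus one edge plus pendant attachments, and a two-component HISF to $C$ minus two $x,y$-separating edges --- is a cleaner organisation of the same enumeration the paper performs by locally propagating degree constraints from the edges $v_1v_2$, $v_2v_3$, etc., and the spot checks (e.g.\ that deleting an edge of $C$ incident to $x$ or $y$ forces two pendants onto a single $2$-valent vertex) go through as you claim.
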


\begin{proof}
For $F_1$ and $F_2$, properties (2) and (3) can be checked by Figures~\ref{fig:3-1} and~\ref{fig:3-2} in the Appendix, using symmetry. 
Now we confirm properties (1) and (4). 

For $F_1$, let $\Upsilon$ be either an $\{ x,y \}$-excluded HIST or an $\{ x,y \}$-excluded HISF with property (3b). In $\Upsilon$, precisely one of the two edges $v_1v_2$ or $v_2v_3$ is used. If $v_1v_2 \in E(\Upsilon)$, then $yv_1, v_1v_6 \in E(\Upsilon)$. Similarly, precisely one of $v_4v_5$ or $v_5v_6$ is present. On the one hand, if $v_5v_6 \in E(\Upsilon)$, then $xv_6, v_1v_6 \in E(\Upsilon)$ and we see that $\Upsilon$ is the $\{ x,y \}$-excluded HIST with $E(\Upsilon) = \{ xv_3, xv_6, yv_1,yv_4, v_1v_2, v_1v_6, v_5v_6 \}$. (See the top centre drawing of Figure~\ref{fig:3-1}.) On the other hand, if $v_4v_5 \in E(\Upsilon)$, then $yv_4, v_3v_4 \in E(\Upsilon)$ and, hence, $x$ cannot be in $\Upsilon$ which is a contradiction. 
If the edge $v_2v_3 \in E(\Upsilon)$, then the same argument implies that $\Upsilon$ is the $\{ x,y \}$-excluded HIST with $E(\Upsilon) = \{ xv_3, xv_6, yv_1,yv_4, v_2v_3, v_3v_4, v_4v_5 \}$. In both cases, $\Upsilon$ is an $\{ x,y \}$-excluded HIST with $d_{\Upsilon}(x) = d_{\Upsilon}(y) = 2$.

For $F_2$, let $\Upsilon$ be either an $\{ x,y \}$-excluded HIST or an $\{ x,y \}$-excluded HISF with the property (3b). In $\Upsilon$, precisely one of the two edges $v_1v_2$ or $v_2v_3$ is used and precisely one of the two edges $v_4v_5$ or $v_5v_6$ is used. We consider the three cases by symmetry: $v_1v_2, v_4v_5 \in E(\Upsilon)$, $v_1v_2, v_5v_6 \in E(\Upsilon)$, and $v_2v_3, v_4v_5 \in E(\Upsilon)$. 
If $v_1v_2, v_4v_5 \in E(\Upsilon)$, then $v_3v_4, v_4v_9 \in E(\Upsilon)$ and, hence, $yv_9, v_7v_9 \in E(\Upsilon)$. To include the two vertices $v_8$ and $v_{10}$, we see that $\Upsilon$ is the $\{ x,y \}$-excluded HIST with $E(\Upsilon) = \{ xv_1, xv_8, yv_6,yv_9, v_1v_2, v_1v_6, v_3v_4, v_4v_5, v_4v_9, v_6v_{10}, v_7v_9 \}$. 
If $v_1v_2, v_5v_6 \in E(\Upsilon)$, then $v_3v_8, v_4v_9 \in E(\Upsilon)$ to include the two vertices $v_3$ and $v_4$ and, hence, $xv_8, yv_9, v_7v_9, v_8v_{10} \in E(\Upsilon)$. Then we see that $\Upsilon$ is the $\{ x,y \}$-excluded HIST with $E(\Upsilon) = \{ xv_1, xv_8, yv_6,yv_9, v_1v_2, v_1v_6, v_3v_8, v_4v_9, v_5v_6, v_7v_9, v_8v_{10} \}$. (See the top centre drawing of Figure~\ref{fig:3-2}.) 
If $v_2v_3, v_4v_5 \in E(\Upsilon)$, then $v_3v_4, v_3v_8, v_4v_9 \in E(\Upsilon)$ and, hence, by symmetry we can assume that $xv_8, v_8v_{10} \in E(\Upsilon)$. If $yv_9 \not\in E(\Upsilon)$, then $yv_6 \in E(\Upsilon)$ and we see that $v_7$ cannot be in $\Upsilon$. So we have $yv_9 \in E(\Upsilon)$ and we see that $\Upsilon$ is the $\{ x,y \}$-excluded HIST with $E(\Upsilon) = \{ xv_1, xv_8, yv_6,yv_9, v_2v_3, v_3v_4, v_3v_8, v_4v_5, v_4v_9, v_7v_9, v_8v_{10} \}$. 
In all cases, $\Upsilon$ is an $\{ x,y \}$-excluded HIST with $d_{\Upsilon}(x) = d_{\Upsilon}(y) = 2$. 
%
\end{proof}

\subsection{Gluing \texorpdfstring{\boldmath$K_1$}{K1}-histonian graphs}

One might wonder whether gluing procedures---which are very successful in the context of hypohamiltonian graphs---can be formulated for HIST-critical graphs. Unfortunately, the next observation shows that a very natural gluing procedure applied to two $K_1$-histonian graphs (this includes all HIST-critical graphs) always yields a graph containing a HIST.

Let $G$ and $H$ be disjoint graphs. We consider non-adjacent vertices $x_G,y_G$ in $G$ and non-adjacent vertices $x_H,y_H$ in $H$. First, identify $x_G$ with $x_H$ and $y_G$ with $y_H$; the obtained vertices will be denoted by $x$ and $y$. Thereafter, add a new vertex $z$ and join it to $x$ and $y$. Finally, add the edge $xy$. The resulting graph shall be denoted by $(G,x_G,y_G):(H,x_H,y_H)$, and when the choice of $x_G,y_G$ and $x_H,y_H$ plays no role, we simply write $G:H$. Observe that this can be seen as identifying two non-adjacent vertices in two $K_1$-histonian graphs, and then identifying these two vertices with two vertices of a triangle, which is HIST-critical.

\begin{prop}\label{prop1}
	\emph{If $G$ and $H$ are $K_1$-histonian, then $G:H$ is $K_1$-histonian. Moreover, $G:H$ contains a HIST.}
\end{prop}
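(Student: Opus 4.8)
The plan is to handle the two assertions separately, since the second (existence of a HIST in $G:H$) is actually the more delicate one and the first serves partly as a warm-up. For the first assertion, I would take a vertex $w$ of $G:H$ and produce a HIST of $(G:H) - w$ by case analysis on where $w$ lies. If $w = z$, then $(G:H) - z$ is obtained from $G$ and $H$ by identifying $x_G$ with $x_H$, $y_G$ with $y_H$, and adding the edge $xy$; since $G$ is $K_1$-histonian we can take a HIST of $G - x_G$, which gives a spanning tree of $G$-part minus $x$ in which every non-$y$ vertex has degree $\ne 2$, glue it along $y$ to a suitable HIST of $H$, and then use the extra edge $xy$ to reattach $x$ as a leaf (so $d(x)=1$), being careful that $y$ does not become a $2$-vertex — this is where one may need to choose the HIST of $H$ so that $d(y) \ge 2$ there, or route $xy$ differently. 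If $w$ lies in the $G$-side (and is not $x$ or $y$), take a HIST of $G - w$; it covers $x$ and $y$; extend it across $H$ by attaching a spanning tree of $H$ rooted appropriately at $x$ and $y$ and using $z$ as an extra leaf hung off $x$ or $y$ to fix any degree-$2$ problem there; the symmetric argument handles $w$ on the $H$-side.

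For the ``moreover'' part — that $G:H$ itself contains a HIST — the idea is to exploit the fact that we have, simultaneously, three short connections between $x$ and $y$: the direct edge $xy$, the path $x z y$, and whatever connections run through $G$ and through $H$. The key step is: since $G$ is $K_1$-histonian, $G - y_G$ has a HIST, which is a spanning tree $T_G$ of $G$ in which every vertex except possibly $y_G$ has degree $\ne 2$ — and in fact, being a HIST of the vertex-deleted graph, it makes $x_G$ a $1$- or $\ge 3$-vertex, while $y_G$ appears only as a leaf if at all. Symmetrically take a HIST $T_H$ of $H - x_H$. Now form the union $T_G \cup T_H$ inside $G:H$: this is a spanning forest of $(G:H) - z$ in which the only vertices that could have degree $2$ are $x$ (coming from $T_H$-side behaviour) and $y$ (coming from $T_G$-side). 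Add the vertex $z$ joined to both $x$ and $y$, and possibly the edge $xy$, to both connect the two pieces into one spanning tree and simultaneously bump the degrees of $x$ and $y$: since $z$ contributes $+1$ to each of $d(x)$ and $d(y)$, and $xy$ another $+1$, we can push $d(x)$ and $d(y)$ away from $2$ while keeping $z$ as a $2$-vertex-free vertex — but $z$ has degree exactly $2$ unless we also use $xy$, so in fact the clean choice is to include \emph{both} edges $zx$, $zy$ \emph{and} $xy$, giving $z$ degree $2$, which is forbidden; hence the correct move is to use only two of $\{zx, zy, xy\}$ to connect and add the third if needed as a non-tree edge is impossible in a tree. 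The resolution is to instead not connect via $z$ at all when $x,y$ already lie in a common component: since $T_G \cup T_H$ already spans $(G:H)-z$ as two trees joined at $x$ and $y$ (they share those vertices!), it is in fact a single spanning tree of $(G:H)-z$; then attach $z$ as a leaf to whichever of $x$ or $y$ currently has the ``wrong'' parity, or to $x$, and if that creates the only remaining degree-$2$ vertex at $x$ or $y$, use the edge $xy$ to fix it — now $xy$ \emph{is} available as a tree edge because we still have a tree, we just reroute.

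The main obstacle, and the part requiring genuine care, is the bookkeeping of degrees at the three special vertices $x$, $y$, $z$: each of $T_G$ and $T_H$ constrains the degrees of $x$ and $y$ only up to ``$\ne 2$ \emph{or} is the deleted vertex'', so one must check that among the few configurations of $(d_{T_G}(x), d_{T_G}(y), d_{T_H}(x), d_{T_H}(y))$ that can arise, there is always a choice of how to incorporate $z$ and the edge $xy$ that leaves no vertex of degree exactly $2$ and keeps the result a spanning tree. I expect this reduces to a handful of cases (is $d_{T_G}(y) = 1$ or $\ge 3$? likewise the others) and in each the triangle-like gadget on $\{x,y,z\}$ supplies exactly enough flexibility — this is presumably the point of the authors' parenthetical remark that one is ``identifying these two vertices with two vertices of a triangle, which is HIST-critical.'' I would organise the final write-up around that case table rather than the somewhat circular reasoning sketched above.
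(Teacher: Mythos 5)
Your overall plan (case analysis on the deleted vertex, then a separate construction for a HIST of $G:H$ itself) matches the paper's, but the execution has a genuine error at its core: you delete \emph{different} special vertices from $G$ and from $H$. In the ``moreover'' part you take a HIST $T_G$ of $G-y_G$ and a HIST $T_H$ of $H-x_H$; these are vertex-disjoint inside $G:H$ ($T_G$ contains $x$ but not $y$, $T_H$ contains $y$ but not $x$), so your later claim that ``they share those vertices'' and that $T_G\cup T_H$ is already a single spanning tree of $(G:H)-z$ is false. You are then forced to spend two of the three edges $xy,zx,zy$ on connecting three components ($T_G$, $T_H$, and $z$), and the case analysis you defer does \emph{not} close: if $d_{T_G}(x)=d_{T_H}(y)=1$, which you cannot exclude, then $\{xy,zx\}$ leaves $y$ with degree $2$, $\{xy,zy\}$ leaves $x$ with degree $2$, $\{zx,zy\}$ leaves $z$ with degree $2$, and all three edges create a cycle. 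There are further local problems: a HIST of $G-y_G$ is a spanning tree of $G-y_G$, not of $G$; in the $w=z$ case you invoke ``a HIST of $H$'', which need not exist ($K_1$-histonian does not imply having a HIST --- $K_3$ is $K_1$-histonian and HIST-free); for $w$ in the interior of $G$, gluing a HIST of $G-w$ to a ``spanning tree of $H$'' creates a cycle through $x$ and $y$; and the cases $w=x$ and $w=y$ are never treated, although they are the foundational ones.

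The missing idea is to delete the \emph{same} special vertex from both sides. A HIST of $G-x$ and a HIST of $H-x$ intersect in exactly the vertex $y$, so their union is a single tree spanning $V(G:H)\setminus\{x,z\}$ in which every vertex other than $y$ retains its degree from one of the two HISTs, while $y$ has degree at least $1+1=2$. Now hang $z$ and/or $x$ off $y$ as leaves via $zy$ and $xy$: no cycle arises, no other degree changes, and $y$'s degree rises to at least $3$ (or $4$). This one construction yields a HIST of $(G:H)-x$ (use $zy$ only), of $(G:H)-z$ (use $xy$ only), of $G:H$ itself (use both), and --- replacing the HIST of $G-x$ by a HIST of $G-v$ --- of $(G:H)-v$ for every interior vertex $v$. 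That is exactly the paper's proof, and it eliminates all of the degree bookkeeping at $x$, $y$, $z$ that your version would require.
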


\begin{proof}
	Throughout the proof we see $G$ and $H$ as subgraphs of $\Gamma := G:H$. In $\Gamma - x$, we obtain a HIST $T$ by taking the union of the HIST present in $G - x$, the HIST present in $H - x$, and $(\{ y, z \}, yz)$, thus guaranteeing that the degree of $y$ in $T$ is at least 3. Analogously we obtain a HIST in $\Gamma - y$. In $\Gamma - z$, consider $T - yz + xy$. By considering $T + xy$, we see that $\Gamma$ itself must contain a HIST.
	
	Now let $v$ be a vertex in $G - x - y$. Consider a HIST $T_G^v$ in $G - v$ and a HIST $T_H^x$ in $H - x$. Then $T_G^v \cup T_H^x \cup (\{ y, z \}, yz)$ is the desired HIST in $\Gamma - v$. For a vertex in $H - x - y$ we can use the same argument, thus completing the proof.
\end{proof}

\subsection{Planar HIST-critical graphs}

Here we give an exhaustive list of the counts of all planar HIST-critical graphs up to order 14, and present an infinite family of planar HIST-critical graphs.

Using \texttt{plantri}~\cite{BM07} we generated all planar $2$-connected graphs up to order $14$ and used our algorithm from Section~\ref{sec:algo} to determine which are HIST-critical. The results can be found in Table~\ref{tab:counts_hist-crit_2conn_planar}.
All graphs from this table can be obtained from the \textit{House of Graphs}~\cite{CDG23} at \url{https://houseofgraphs.org/meta-directory/hist-critical} and also be inspected in the database of interesting graphs at the House of Graphs by searching for the keywords ``planar HIST-critical''.


\begin{table}[H]
\centering
\begin{tabular}{c | r r r r r r r r r r r r }
Order & 3 & 4 & 5 & 6 & 7 & 8 & 9 & 10 & 11 & 12 & 13 & 14 \\ \hline
HIST-critical & 1 & 0 & 0 & 0 & 2 & 0 & 0 & 0 & 12 & 0 & 12 & 0 \
\end{tabular}
\caption{Exact counts of planar HIST-critical graphs for each order.} 
\label{tab:counts_hist-crit_2conn_planar}
\end{table}

Motivated by corresponding problems for hypohamiltonian graphs (as described at the beginning of this section), we shall now present an infinite family of HIST-critical graphs, with the added property of planarity. The second part of the next theorem is inspired by a similar question of Chv\'atal on hypohamiltonian graphs, see~\cite{Ch73,Th74,Za15}. 

\begin{thm}\label{thm1}
	\emph{There are infinitely many planar HIST-critical graphs. Moreover, there exist infinitely many planar HIST-critical graphs $G$, each containing an edge $e$ such that $G - e$ is $3$-connected and HIST-critical.}
\end{thm}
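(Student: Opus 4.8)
The plan is to build the infinite family by applying Theorem~\ref{fragment-chain} with a clever choice of cycle length $k$ and HIST-critical fragments, then to carefully exhibit the edge $e$ inside this construction. For the first statement, I would take $k \ge 3$ and glue $k$ disjoint copies of the fragment $F_1$ (the Petersen graph minus two adjacent vertices) from Proposition~\ref{fragments} around a $k$-cycle $C = v_0 \ldots v_{k-1}v_0$, identifying $v_i$ with $x_i$, $v_{i+1}$ with $y_i$, and deleting the cycle edges $v_iv_{i+1}$ as prescribed. Theorem~\ref{fragment-chain} immediately gives that each resulting graph $G_k$ is HIST-critical, and letting $k \to \infty$ produces infinitely many pairwise non-isomorphic examples (they have strictly increasing order, $8k$ vertices). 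The first task is therefore just to verify planarity: since $F_1$ is planar and the two attachment vertices $x_1, y_1$ lie on a common face of a suitable plane embedding (this can be read off from Figure~\ref{fig:3-1}), one can embed the copies of $F_1$ consecutively in the faces bounded by the cycle $C$, so $G_k$ is plane. I would also note $G_k$ is $3$-connected: each $F_1$-copy is internally well-connected, and the cyclic arrangement prevents $2$-cuts.

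For the ``moreover'' part I need, inside one such $G_k$, an edge $e$ whose deletion keeps the graph $3$-connected and HIST-critical while creating no $2$-vertex. The natural candidate is an edge lying \emph{inside} one fragment copy, say $H_0$, chosen so that both its endpoints have degree $\ge 4$ in $G_k$ (so deleting it produces no $2$-vertex) and so that $H_0 - e$ is again a HIST-critical $\{x_0,y_0\}$-fragment. Then $G_k - e$ is exactly the chain obtained by replacing $H_0$ with $H_0 - e$, so Theorem~\ref{fragment-chain} re-applies verbatim and gives that $G_k - e$ is HIST-critical; planarity and $3$-connectivity are preserved for the same structural reasons as before. The arithmetic works because $F_1$ has vertices of degree $3$, so after identification the fused vertices $v_i$ acquire degree $4$ in $G_k$ (two half-edges from two fragments), and an edge joining two such fused-or-high-degree vertices in a single copy is safe; alternatively one can first pass to a slightly enlarged fragment $F_1'$ built from $F_1$ plus an extra gadget that raises the relevant degrees and that still satisfies properties (1)--(4).

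The main obstacle I anticipate is not the gluing bookkeeping but verifying that $F_1 - e$ (or the enlarged $F_1'$ minus the designated edge) still satisfies \emph{all four} fragment axioms, especially property (1) --- that every $\{x,y\}$-excluded HIST has $x$ and $y$ both of degree exactly $2$ --- and property (4). These are exactly the conditions that were checked by hand for $F_1$ and $F_2$ in the proof of Proposition~\ref{fragments} via a short case analysis on which of the two ``forced pairs'' of edges around a degree-$2$ internal vertex are used; the same style of argument should carry through, but it is delicate because deleting $e$ removes exactly one of the escape routes that made the original case analysis close. I would therefore select $e$ to be an edge whose removal leaves a fragment that is still small enough to analyse by the same ``precisely one of $v_1v_2$ or $v_2v_3$ is used'' dichotomy, and present the verification as a compact case check (or, given the paper's computational framework, cite a computer verification that the concrete enlarged fragment satisfies (1)--(4)). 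Finally I would double-check the degree condition at the endpoints of $e$ so that $G_k - e$ genuinely contains no $2$-vertex, and record that $G_k - e$ is $3$-connected by exhibiting three internally disjoint paths between any two vertices, using the cyclic redundancy of the chain together with the internal connectivity of the modified fragment.
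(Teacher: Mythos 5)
Your approach has two fatal structural problems, both independent of the delicate fragment-axiom verification you flag as the ``main obstacle''. First, the chain construction of Theorem~\ref{fragment-chain} can never produce a $3$-connected graph: after identifying $v_i$ with $x_i$ and $v_{i+1}$ with $y_i$ and deleting the cycle edges, consecutive fragments meet only in single vertices, so $\{x_i,y_i\}$ is a $2$-cut separating the interior of $H_i$ from the rest of the graph. Consequently neither $G_k$ nor $G_k-e$ is ever $3$-connected, and the ``moreover'' part of the theorem --- which explicitly requires $G-e$ to be $3$-connected --- is out of reach for any graph built this way, no matter how $e$ is chosen. Second, your planarity claim for the specific fragment $F_1$ is false. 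Suppressing the degree-$2$ vertices of $F_1$ yields $K_4$ on $\{v_1,v_3,v_4,v_6\}$, whose planar embedding is unique; the vertex $x$ sits on the subdivided edge $v_3v_6$ and $y$ on the subdivided edge $v_1v_4$, and these two non-adjacent edges of $K_4$ share no face. Hence no plane embedding of $F_1$ has $x$ and $y$ cofacial, and since any necklace of two or more fragments contains an $x_0$--$y_0$ path internally disjoint from $H_0$, the resulting graphs are non-planar. So even the first assertion of the theorem does not follow from your construction as stated. (A minor further point: the necklace on $k$ copies of $F_1$ has $7k$ vertices, not $8k$, since consecutive copies share a vertex.)

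For comparison, the paper proves Theorem~\ref{thm1} by an entirely different, explicit construction: $G_k$ consists of two ``fans'' of $k-1$ triangles sharing the apices $c_1,\dots,c_{k-1}$, closed off by two vertices $x$ and $y$; this graph is planar and $3$-connected by inspection. HIST-freeness of $H_k=G_k+a_1a_k$ is obtained from a parity argument (the odd order $3k+1$ forces any HIST to contain a $4$-vertex, every $4$-vertex lies on two adjacent triangles, and propagating the forced triangle edges shows the edges $b_1y$, $b_ky$, $xy$ are all excluded, stranding $y$), and the vertex-deleted HISTs are exhibited explicitly. The edge $e=a_1a_k$ then witnesses the ``moreover'' statement. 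The fragment-chain machinery is used in the paper only for Theorem~\ref{orders} (realising orders $7k_1+11k_2$), precisely because it cannot deliver planarity-plus-$3$-connectivity. If you want to salvage a chain-based proof of the first assertion only, you would need a planar fragment in which $x$ and $y$ lie on a common face --- $F_1$ and $F_2$ do not qualify --- and you would still need a separate argument for the second assertion.
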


\begin{proof}
	For each integer $k \ge 3$, let $G_k$ be a planar graph with vertex set and edge set defined as follows.
\begin{eqnarray*}
V(G_k) &=& \{a_1, \ldots, a_k, b_1, \ldots, b_k, c_1, \ldots, c_{k-1}, x, y\} \\
E(G_k) &=& \{a_ia_{i+1}, a_ic_i, a_{i+1}c_i, b_ib_{i+1}, b_ic_i, b_{i+1}c_i \mid 1 \le i \le k-1 \} \cup \{a_1x, a_kx, b_1y, b_ky, xy \}
\end{eqnarray*}
	Its plane embedding is depicted in Figure~\ref{subfig:a}, where $x$ and $y$ are adjacent. 
(It is not difficult to see that $G_k$ is 3-connected,
and this embedding is unique by a classic result of Whitney.)
	

	We show that, for every even integer $k \ge 4$, both the graph $G_k$ and the graph $H_k := G_k + a_1a_k$ are planar HIST-critical graphs. 
Thus, $H_k$ is the desired infinite family. 
First, we prove that $H_k$ is HIST-free. 
Suppose $H_k$ does have a HIST $T$. Since $|V(T)| = 3k+1$ is odd, $H$ has a vertex of even degree by the degree sum formula, that is, a $4$-vertex $v$. Note that every $4$-vertex in $H_k$ lies on two adjacent triangles $vpq$ and $vrs$. Thus, exactly four edges, namely $vp, vq, vr, vs$ in the two triangles lie in $T$. Since $T$ spans all vertices of $H_k$, at least one of $p, q, r, s$ should be of degree~$3$ in $T$, say, $p$. Then $p$ should have degree~$4$ in $H_k$ and $p$ lies on another triangle, say, $ptu$, and so exactly two edges $pt, pu$ in the triangle must be in $T$. By this argument, it is easy to see that $T$ contains none of the three edges $b_1y, b_ky, xy$ which do not lie on a triangle, and hence $T$ does not contain the vertex $y$, a contradiction. Hence, $G_k$ is HIST-free, too.
	
	Next, we show that for every $v \in V(G_k)$, $G_k - v$ has a HIST. By symmetry, we only need to consider the following five cases.
	\begin{itemize}
	\item For $v = x$, see Figure~\ref{subfig:b}.
	\item
     For $v =a_i$ where $i \in \{1, 3, \ldots, k-3\}$, see Figure~\ref{subfig:c}. 
One should add edges $$a_1c_1, a_2c_1, a_3c_3, a_4c_3, \ldots, a_{i-2}c_{i-2}, a_{i-1}c_{i-2}, a_{i+1}c_{i+1}, a_{i+2}c_{i+1}, \ldots, a_{k-4}c_{k-4}, a_{k-3}c_{k-4}.$$ 
	\item For $v = a_{k-1}$, see Figure~\ref{subfig:d}.
	\item For $v \in \{c_1, c_3, \ldots, c_{k-1}\}$, see Figure~\ref{subfig:e}.
	\item For $v \in \{c_2, c_4, \ldots, c_{k-2}\}$, see Figure~\ref{subfig:f}.
	\end{itemize}
	
	It follows that for every $v \in V(H_k)$, $H_k$ has a HIST, too.
	
\end{proof}

\begin{figure}[!htb]
    \centering
    \newcommand{\x}{1.1}
    \newcommand{\y}{1.3}
    \begin{subfigure}[b]{0.32\textwidth}
    \centering
    \begin{tikzpicture}[scale = 0.5]
        \node[vertex] (0) at (-1*\x, 3*\y) {};
        \node[vertex] (1) at (-1*\x, 2*\y) {};
        \node[vertex] (2) at (-1*\x, 1*\y) {};
        \node[vertex] (3) at (-1*\x, -1*\y) {};
        \node[vertex] (4) at (-1*\x, -2*\y) {};
        \node[vertex] (5) at (-1*\x, -3*\y) {};
        \node[vertex] (6) at (0*\x, 2.5*\y) {};
        \node[vertex] (7) at (0*\x, 1.5*\y) {};
        \node[vertex] (8) at (0*\x, -1.5*\y) {};
        \node[vertex] (9) at (0*\x, -2.5*\y) {};
        \node[vertex] (10) at (1*\x, 3*\y) {};
        \node[vertex] (11) at (1*\x, 2*\y) {};
        \node[vertex] (12) at (1*\x, 1*\y) {};
        \node[vertex] (13) at (1*\x, -1*\y) {};
        \node[vertex] (14) at (1*\x, -2*\y) {};
        \node[vertex] (15) at (1*\x, -3*\y) {};
        \node[vertex] (16) at (-2*\x, 0*\y) {};
        \node[vertex] (17) at (2*\x, 0*\y) {};

         \path[draw]
        (0) edge node {} (2) 
        (3) edge node {} (5) 
        (0) edge node {} (16) 
        (5) edge node {} (16) 
        (10) edge node {} (12) 
        (13) edge node {} (15) 
        (10) edge node {} (17) 
        (15) edge node {} (17) 
        (0) edge node {} (11) 
        (1) edge node {} (12) 
        (3) edge node {} (14) 
        (4) edge node {} (15) 
        (1) edge node {} (10) 
        (2) edge node {} (11) 
        (4) edge node {} (13) 
        (5) edge node {} (14) 
        ;
        \draw (16) -- ($(16) + (-1,0)$);
        \draw (17) -- ($(17) + (1,0)$);

        \draw[very thick,loosely dotted] (0, 0.5)--(0, -0.5);
        \coordinate [label=left: {$a_1$}] () at (0);
        \coordinate [label=above left: {$a_2$}] () at (1);
        \coordinate [label=left: {$a_k$}] () at (5);
        \coordinate [label=right: {$b_1$}] () at (10);
        \coordinate [label=above right: {$b_2$}] () at (11);
        \coordinate [label=right: {$b_k$}] () at (15);
        \coordinate [label=above: {$c_1$}] () at (6);
        \coordinate [label=above: {$c_2$}] () at (7);
        \coordinate [label={[label distance=1.2]below: {$c_{k-1}$}}] () at (9);
        \coordinate [label=above left: {$x$}] () at (16);
        \coordinate [label=above right: {$y$}] () at (17);

    \end{tikzpicture}
    \caption{$G_k$}\label{subfig:a}
    \end{subfigure}
    \begin{subfigure}[b]{0.32\textwidth}
    \centering
    \begin{tikzpicture}[scale = 0.5]
        \node[vertex] (0) at (-1*\x, 3*\y) {};
        \node[vertex] (1) at (-1*\x, 2*\y) {};
        \node[vertex] (2) at (-1*\x, 1*\y) {};
        \node[vertex] (3) at (-1*\x, -1*\y) {};
        \node[vertex] (4) at (-1*\x, -2*\y) {};
        \node[vertex] (5) at (-1*\x, -3*\y) {};
        \node[vertex] (6) at (0*\x, 2.5*\y) {};
        \node[vertex] (7) at (0*\x, 1.5*\y) {};
        \node[vertex] (8) at (0*\x, -1.5*\y) {};
        \node[vertex] (9) at (0*\x, -2.5*\y) {};
        \node[vertex] (10) at (1*\x, 3*\y) {};
        \node[vertex] (11) at (1*\x, 2*\y) {};
        \node[vertex] (12) at (1*\x, 1*\y) {};
        \node[vertex] (13) at (1*\x, -1*\y) {};
        \node[vertex] (14) at (1*\x, -2*\y) {};
        \node[vertex] (15) at (1*\x, -3*\y) {};
        \node[vertex] (16) at (-2*\x, 0*\y) {};
        \node[vertex] (17) at (2*\x, 0*\y) {};

         \path[draw]
        (10) edge node {} (12) 
        (13) edge node {} (15) 
        (10) edge node {} (17) 
        (0) edge node {} (6) 
        (4) edge node {} (9) 
        (1) edge node {} (10) 
        (7) edge node {} (11) 
        (8) edge node {} (13) 
        (5) edge node {} (14) 
        ;
        \draw[white] (16) -- ($(16) + (-1,0)$);
        \draw[white] (17) -- ($(17) + (1,0)$);

        \draw[very thick,loosely dotted] (0, 0.5)--(0, -0.5);
        \coordinate [label=left: {$a_1$}] () at (0);
        \coordinate [label=above left: {$a_2$}] () at (1);
        \coordinate [label=left: {$a_k$}] () at (5);
        \coordinate [label=right: {$b_1$}] () at (10);
        \coordinate [label=above right: {$b_2$}] () at (11);
        \coordinate [label=right: {$b_k$}] () at (15);
        \coordinate [label=above: {$c_1$}] () at (6);
        \coordinate [label=above: {$c_2$}] () at (7);
        \coordinate [label={[label distance=1.2]below: {$c_{k-1}$}}] () at (9);
        \coordinate [label=above left: {$x$}] () at (16);
        \coordinate [label=above right: {$y$}] () at (17);

    \end{tikzpicture}
    \caption{$v=x$}\label{subfig:b}
    \end{subfigure}
    \begin{subfigure}[b]{0.32\textwidth}
    \centering
    \begin{tikzpicture}[scale = 0.5]
        \node[vertex] (0) at (-1*\x, 3*\y) {};
        \node[vertex] (1) at (-1*\x, 2*\y) {};
        \node[vertex] (2) at (-1*\x, 1*\y) {};
        \node[vertex] (3) at (-1*\x, -1*\y) {};
        \node[vertex] (4) at (-1*\x, -2*\y) {};
        \node[vertex] (5) at (-1*\x, -3*\y) {};
        \node[vertex] (6) at (0*\x, 2.5*\y) {};
        \node[vertex] (7) at (0*\x, 1.5*\y) {};
        \node[vertex] (8) at (0*\x, -1.5*\y) {};
        \node[vertex] (9) at (0*\x, -2.5*\y) {};
        \node[vertex] (10) at (1*\x, 3*\y) {};
        \node[vertex] (11) at (1*\x, 2*\y) {};
        \node[vertex] (12) at (1*\x, 1*\y) {};
        \node[vertex] (13) at (1*\x, -1*\y) {};
        \node[vertex] (14) at (1*\x, -2*\y) {};
        \node[vertex] (15) at (1*\x, -3*\y) {};
        \node[vertex] (16) at (-2*\x, 0*\y) {};
        \node[vertex] (17) at (2*\x, 0*\y) {};

         \path[draw]
        (4) edge node {} (5) 
        (10) edge node {} (12) 
        (13) edge node {} (14) 
        (10) edge node {} (17) 
        (15) edge node {} (17) 
        (3) edge node {} (8) 
        (4) edge node {} (9) 
        (6) edge node {} (10) 
        (7) edge node {} (11) 
        (4) edge node {} (13) 
        ;
        \draw (16) -- ($(16) + (-1,0)$);
        \draw (17) -- ($(17) + (1,0)$);

        \draw[very thick,loosely dotted] (0, 0.5)--(0, -0.5);
        \coordinate [label=left: {$a_1$}] () at (0);
        \coordinate [label=above left: {$a_2$}] () at (1);
        \coordinate [label=left: {$a_k$}] () at (5);
        \coordinate [label=right: {$b_1$}] () at (10);
        \coordinate [label=above right: {$b_2$}] () at (11);
        \coordinate [label=right: {$b_k$}] () at (15);
        \coordinate [label=above: {$c_1$}] () at (6);
        \coordinate [label=above: {$c_2$}] () at (7);
        \coordinate [label={[label distance=1.2]below: {$c_{k-1}$}}] () at (9);
        \coordinate [label=above left: {$x$}] () at (16);
        \coordinate [label=above right: {$y$}] () at (17);

    \end{tikzpicture}
    \caption{$v\in\{a_1,a_3,\ldots, a_{k-3}\}$}\label{subfig:c}
    \end{subfigure}

    \bigskip

    \begin{subfigure}[b]{0.32\textwidth}
    \centering
    \begin{tikzpicture}[scale = 0.5]
        \node[vertex] (0) at (-1*\x, 3*\y) {};
        \node[vertex] (1) at (-1*\x, 2*\y) {};
        \node[vertex] (2) at (-1*\x, 0*\y) {};
        \node[vertex] (3) at (-1*\x, -1*\y) {};
        \node[vertex] (4) at (-1*\x, -2*\y) {};
        \node[vertex] (5) at (-1*\x, -3*\y) {};
        \node[vertex] (6) at (0*\x, 2.5*\y) {};
        \node[vertex] (7) at (0*\x, -0.5*\y) {};
        \node[vertex] (8) at (0*\x, -1.5*\y) {};
        \node[vertex] (9) at (0*\x, -2.5*\y) {};
        \node[vertex] (10) at (1*\x, 3*\y) {};
        \node[vertex] (11) at (1*\x, 1*\y) {};
        \node[vertex] (12) at (1*\x, 0*\y) {};
        \node[vertex] (13) at (1*\x, -1*\y) {};
        \node[vertex] (14) at (1*\x, -2*\y) {};
        \node[vertex] (15) at (1*\x, -3*\y) {};
        \node[vertex] (16) at (-2*\x, 0*\y) {};
        \node[vertex] (17) at (2*\x, 0*\y) {};
        \node[vertex] (18) at (0*\x, 0.5*\y) {};

         \path[draw]
        (0) edge node {} (1) 
        (0) edge node {} (16) 
        (5) edge node {} (16) 
        (11) edge node {} (15) 
        (10) edge node {} (17) 
        (15) edge node {} (17) 
        (0) edge node {} (6) 
        (18) edge node {} (12) 
        (2) edge node {} (13)
        (8) edge node {} (14) 
        (9) edge node {} (15) 
        (3) edge node {} (7) 
        ;
        \draw (16) -- ($(16) + (-1,0)$);
        \draw (17) -- ($(17) + (1,0)$);

        \draw[very thick,loosely dotted] (0, 2)--(0, 1);
        \coordinate [label=left: {$a_1$}] () at (0);
        \coordinate [label=above left: {$a_2$}] () at (1);
        \coordinate [label=left: {$a_k$}] () at (5);
        \coordinate [label=right: {$b_1$}] () at (10);
        \coordinate [label=right: {$b_2$}] () at (11);
        \coordinate [label=right: {$b_k$}] () at (15);
        \coordinate [label=above: {$c_1$}] () at (6);
        \coordinate [label=above: {$c_2$}] () at (7);
        \coordinate [label={[label distance=1.2]below: {$c_{k-1}$}}] () at (9);
        \coordinate [label=above left: {$x$}] () at (16);
        \coordinate [label=above right: {$y$}] () at (17);

    \end{tikzpicture}
    \caption{$v=a_{k-1}$}\label{subfig:d}
    \end{subfigure}
    \begin{subfigure}[b]{0.32\textwidth}
    \centering
    \begin{tikzpicture}[scale = 0.5]
        \node[vertex] (0) at (-1*\x, 3*\y) {};
        \node[vertex] (1) at (-1*\x, 2*\y) {};
        \node[vertex] (2) at (-1*\x, 1*\y) {};
        \node[vertex] (3) at (-1*\x, -1*\y) {};
        \node[vertex] (4) at (-1*\x, -2*\y) {};
        \node[vertex] (5) at (-1*\x, -3*\y) {};
        \node[vertex] (6) at (0*\x, 2.5*\y) {};
        \node[vertex] (7) at (0*\x, 1.5*\y) {};
        \node[vertex] (8) at (0*\x, -1.5*\y) {};
        \node[vertex] (9) at (0*\x, -2.5*\y) {};
        \node[vertex] (10) at (1*\x, 3*\y) {};
        \node[vertex] (11) at (1*\x, 2*\y) {};
        \node[vertex] (12) at (1*\x, 1*\y) {};
        \node[vertex] (13) at (1*\x, -1*\y) {};
        \node[vertex] (14) at (1*\x, -2*\y) {};
        \node[vertex] (15) at (1*\x, -3*\y) {};
        \node[vertex] (16) at (-2*\x, 0*\y) {};
        \node[vertex] (17) at (2*\x, 0*\y) {};

         \path[draw]
        (0) edge node {} (2) 
        (0) edge node {} (16) 
        (5) edge node {} (16) 
        (13) edge node {} (15) 
        (10) edge node {} (17) 
        (15) edge node {} (17) 
        (0) edge node {} (6) 
        (1) edge node {} (12) 
        (3) edge node {} (14) 
        (9) edge node {} (15) 
        (7) edge node {} (11) 
        (4) edge node {} (8) 
        ;
        \draw (16) -- ($(16) + (-1,0)$);
        \draw (17) -- ($(17) + (1,0)$);

        \draw[very thick,loosely dotted] (0, 0.5)--(0, -0.5);
        \coordinate [label=left: {$a_1$}] () at (0);
        \coordinate [label=above left: {$a_2$}] () at (1);
        \coordinate [label=left: {$a_k$}] () at (5);
        \coordinate [label=right: {$b_1$}] () at (10);
        \coordinate [label=above right: {$b_2$}] () at (11);
        \coordinate [label=right: {$b_k$}] () at (15);
        \coordinate [label=above: {$c_1$}] () at (6);
        \coordinate [label=above: {$c_2$}] () at (7);
        \coordinate [label={[label distance=1.2]below: {$c_{k-1}$}}] () at (9);
        \coordinate [label=above left: {$x$}] () at (16);
        \coordinate [label=above right: {$y$}] () at (17);

    \end{tikzpicture}
    \caption{$v\in\{c_1,c_3,\ldots, c_{k-1}\}$}\label{subfig:e}
    \end{subfigure}
    \begin{subfigure}[b]{0.32\textwidth}
    \centering
    \begin{tikzpicture}[scale = 0.5]
        \node[vertex] (0) at (-1*\x, 3*\y) {};
        \node[vertex] (1) at (-1*\x, 2*\y) {};
        \node[vertex] (2) at (-1*\x, 1*\y) {};
        \node[vertex] (3) at (-1*\x, -1*\y) {};
        \node[vertex] (4) at (-1*\x, -2*\y) {};
        \node[vertex] (5) at (-1*\x, -3*\y) {};
        \node[vertex] (6) at (0*\x, 2.5*\y) {};
        \node[vertex] (7) at (0*\x, 1.5*\y) {};
        \node[vertex] (8) at (0*\x, -1.5*\y) {};
        \node[vertex] (9) at (0*\x, -2.5*\y) {};
        \node[vertex] (10) at (1*\x, 3*\y) {};
        \node[vertex] (11) at (1*\x, 2*\y) {};
        \node[vertex] (12) at (1*\x, 1*\y) {};
        \node[vertex] (13) at (1*\x, -1*\y) {};
        \node[vertex] (14) at (1*\x, -2*\y) {};
        \node[vertex] (15) at (1*\x, -3*\y) {};
        \node[vertex] (16) at (-2*\x, 0*\y) {};
        \node[vertex] (17) at (2*\x, 0*\y) {};

         \path[draw]
        (0) edge node {} (2) 
        (3) edge node {} (5) 
        (0) edge node {} (16) 
        (5) edge node {} (16) 
        (0) edge node {} (11) 
        (1) edge node {} (7) 
        (9) edge node {} (15) 
        (6) edge node {} (10) 
        (4) edge node {} (8) 
        (5) edge node {} (14) 
        ;
        \draw (16) -- ($(16) + (-1,0)$);
        \draw (17) -- ($(17) + (1,0)$);

        \draw[very thick,loosely dotted] (0, 0.5)--(0, -0.5);
        \coordinate [label=left: {$a_1$}] () at (0);
        \coordinate [label=above left: {$a_2$}] () at (1);
        \coordinate [label=left: {$a_k$}] () at (5);
        \coordinate [label=right: {$b_1$}] () at (10);
        \coordinate [label=right: {$b_2$}] () at (11);
        \coordinate [label=right: {$b_k$}] () at (15);
        \coordinate [label=above: {$c_1$}] () at (6);
        \coordinate [label=above: {$c_2$}] () at (7);
        \coordinate [label={[label distance=1.2]below: {$c_{k-1}$}}] () at (9);
        \coordinate [label=above left: {$x$}] () at (16);
        \coordinate [label=above right: {$y$}] () at (17);

    \end{tikzpicture}
    \caption{$v\in\{c_2,c_4,\ldots, c_{k-2}\}$}\label{subfig:f}
    \end{subfigure}
    \caption{Graph $G_k$ and HISTs of vertex-deleted subgraphs of $G_k$.} 
\label{fig:1}
\end{figure}
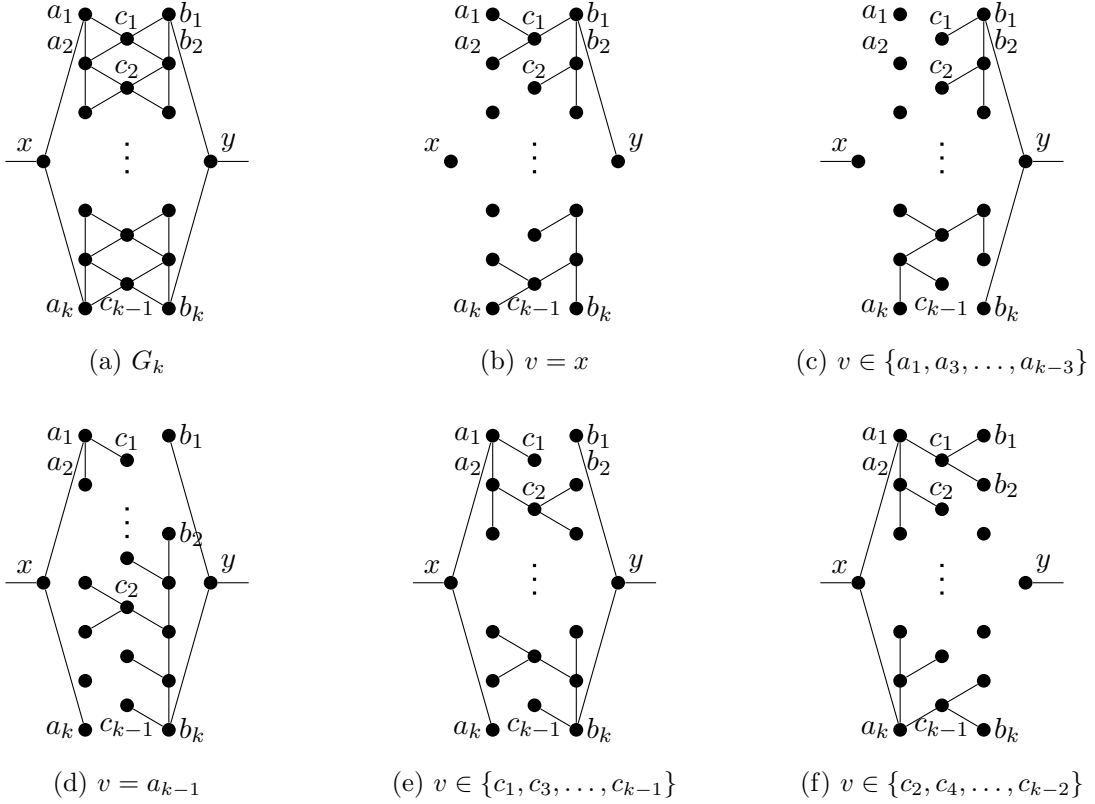

\subsection{A near characterisation of the orders for which HIST-critical graphs exist}
\label{sec:charcterisation}

We summarise our computations and theoretical arguments regarding the existence and non-existence of HIST-critical graphs in the following result.

\begin{thm}\label{orders} 
\textit{Let ${\cal N} := \{ 1, 2, 4, 5, 6, 8, 10, 12\}$ and ${\cal M} := \{26, 30, 34, 38, 45, 48, 52\}$. There exist HIST-critical graphs for every $n \in \mathbb{N} \setminus ({\cal N} \cup {\cal M})$, while there are no HIST-critical graphs of order $n \in {\cal N}$. There exist planar HIST-critical graphs of order $3$, $7$, $11$, $15$, $17$ and $3k + 1$ for every even integer $k \ge 4$, while there are no such graphs of order $n \in {\cal N} \cup \{ 14 \}$.} 
\end{thm}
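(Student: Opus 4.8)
The plan is to prove the four assertions separately, combining a single large-order construction with the computational data and the explicit examples already assembled. The construction is the chain of Theorem~\ref{fragment-chain} fed with the fragments $F_1$ and $F_2$ of Proposition~\ref{fragments}: a chain built from $a \ge 0$ copies of $F_1$ and $b \ge 0$ copies of $F_2$ with $a + b \ge 2$ is HIST-critical, and, since $F_1$ contributes $6$ interior vertices, $F_2$ contributes $10$, and one cycle vertex is contributed per fragment, it has exactly $7a + 11b$ vertices. As $\gcd(7,11) = 1$ and the Frobenius number of $\{7,11\}$ is $7 \cdot 11 - 7 - 11 = 59$, every integer $n \ge 60$ is of the form $7a + 11b$ with $a, b \ge 0$, and since $n > 11$ such a representation automatically has $a + b \ge 2$; hence a HIST-critical graph of order $n$ exists for every $n \ge 60$.

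It then remains to treat $n \le 59$. The orders $n \le 59$ that are \emph{not} of the form $7a + 11b$ with $a + b \ge 2$ (note that $7$ and $11$ are excluded, a chain requiring at least two fragments) form a finite set that can be listed explicitly; deleting from it the orders in $\mathcal N \cup \mathcal M$ leaves precisely
\[
\{3,\,7,\,9,\,11,\,13,\,15,\,16,\,17,\,19,\,20,\,23,\,24,\,27,\,31,\,37,\,41,\,59\}.
\]
For each of these I would exhibit a HIST-critical graph: $K_3$ for $n = 3$; the graphs counted in Table~\ref{tab:2-conn_HIST-critical} for $n \in \{7,9,11,13,15,16,17,19,20,23,24,27\}$; the planar graphs $H_{10}$ (of order $31$) and $H_{12}$ (of order $37$) from Theorem~\ref{thm1}; and the HIST-critical graphs of girth $8$ of order $41$ and of girth $9$ of order $59$ constructed (non-exhaustively) in Section~\ref{sec:hist-critical}. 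This settles existence for all $n \in \mathbb N \setminus (\mathcal N \cup \mathcal M)$. For non-existence, $n \in \{1,2\}$ is immediate because $K_3$ is the smallest HIST-critical graph, and $n \in \{4,5,6,8,10,12\}$ is exactly the content of the vanishing entries $h(n) = 0$ in Table~\ref{tab:2-conn_HIST-critical}, produced by the exhaustive generation of all $2$-connected graphs of those orders.

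For the planar part, existence is quick: $K_3$ gives $n = 3$; Table~\ref{tab:counts_hist-crit_2conn_planar} records planar HIST-critical graphs of orders $7$ and $11$; explicit planar HIST-critical graphs of orders $15$ and $17$ are given (see the appendix and the House of Graphs); and Theorem~\ref{thm1} supplies the planar graphs $G_k$ and $H_k$ of order $3k + 1$ for every even $k \ge 4$. For planar non-existence, $n \in \mathcal N$ is a fortiori (no HIST-critical graph of that order exists at all), and $n = 14$ follows from the exhaustive generation of all $2$-connected \emph{planar} graphs on $14$ vertices (entry $0$ in Table~\ref{tab:counts_hist-crit_2conn_planar}); note that HIST-critical graphs of order $14$ do exist---for instance the length-$2$ chain of two copies of $F_1$, or the girth-$4$ graph counted in Table~\ref{tab:2-conn_HIST-critical}---so the obstruction at $n = 14$ is genuinely one of planarity rather than a consequence of non-existence.

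The only real work is the bookkeeping: one must verify that the four ingredients---fragment chains, the family $\{G_k, H_k\}$, the exhaustive tables, and the sporadic girth-$8$ and girth-$9$ graphs---jointly cover exactly $\mathbb N \setminus (\mathcal N \cup \mathcal M)$, with no order slipping through. The delicate range is roughly $40 \le n \le 59$, where chains of bounded length do not yet realise every residue class, which is precisely why the two specially constructed graphs of orders $41$ and $59$ are indispensable; for the planar statement the analogous indispensable inputs are the explicit examples of orders $15$ and $17$, which lie beyond the reach of the exhaustive planar enumeration. Everything else is a direct appeal to the results stated above.
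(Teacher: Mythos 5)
Your proposal is correct and follows essentially the same route as the paper: chains of $F_1$/$F_2$ fragments giving orders $7a+11b$, the planar family of order $3k+1$, the exhaustive tables for small orders and for non-existence (including planar non-existence at $14$), and the sporadic girth-$8$ and girth-$9$ graphs of orders $41$ and $59$. The only difference is that you carry out explicitly (via the Frobenius number $59$ and the resulting list of residual orders) the bookkeeping the paper dismisses as ``elementary to verify,'' and your list and its coverage check out.
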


\begin{proof}

We first prove the statements regarding the general (i.e.\ not necessarily planar) case. The exhaustive computations for HIST-critical graphs whose results were tabulated in Table~\ref{tab:2-conn_HIST-critical} give the non-existence of HIST-critical graphs of order $n$ for every $n \in {\cal N}$. By Theorem~\ref{fragment-chain} and Proposition~\ref{fragments}, for any non-negative integers $k_1$ and $k_2$ with $k_1 + k_2 \ge 2$ we obtain HIST-critical graphs of order $8k_1 + 12k_2 - (k_1 + k_2) = 7k_1 + 11k_2$. From the proof of Theorem~\ref{thm1} we obtain HIST-critical graphs of order $3k+1$ for every even integer $k \ge 4$. It is elementary to verify that we thus obtain the theorem's first statement, using Table~\ref{tab:2-conn_HIST-critical} and the subsequent remark on HIST-critical graphs of girth $8$ and $9$. 

In a very similar way, the statement regarding planar HIST-critical graphs follows from Theorem~\ref{thm1}, Table~\ref{tab:counts_hist-crit_2conn_planar} and by verifying for Table~\ref{tab:2-conn_HIST-critical} which graphs are also planar. This yields extra examples of order $15$ and $17$. 

\end{proof}

It remains an open question whether there exists a HIST-critical graph of order $n$ for $n \in \{26, 30, 34, 38, 45, 48, 52\}$.

\section{On a conjecture of Malkevitch}
\label{sec:malkevitch}

Malkevitch conjectured in 1979~\cite{Ma79} that every planar $4$-connected graph has a HIST. We computationally determined the following.
\begin{prop}
    \textit{Every planar $4$-connected graph up to and including order $22$ has a HIST.}
\end{prop}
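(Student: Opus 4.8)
The plan is to verify the statement purely by computation, so the "proof" is really a description of a reliable exhaustive search. First I would use \texttt{plantri}~\cite{BM07} to generate, for each order $n$ with $11 \le n \le 22$, a complete list of all planar $4$-connected graphs on $n$ vertices. (Recall that planar $4$-connected graphs have minimum degree at least $4$, hence at least $11$ vertices, so smaller orders are vacuous; \texttt{plantri} can be restricted to generate precisely the $4$-connected planar triangulations and, more generally, the $4$-connected planar graphs via its minimum-degree and connectivity options, together with the relevant plantri plug-ins.) For each generated graph $G$ I would then run the branch-and-bound algorithm of Section~\ref{sec:algo} in its HIST-detection mode, stopping as soon as a single HIST is found.

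The key steps, in order, are: (i) enumerate all planar $4$-connected graphs of each order $n \le 22$ with \texttt{plantri}; (ii) feed each such graph to the algorithm of Section~\ref{sec:algo} and record whether it is HIST-free; (iii) confirm that in every case a HIST is found, i.e.\ that no HIST-free graph appears in the lists. To guard against both generation and detection errors, I would additionally (iv) re-run a sample (or the entirety, where feasible) of the search with the two independent implementations described in Section~\ref{sec:algo} and Appendix~\ref{app:correctness}, and cross-check the \texttt{plantri} output counts against the known sequence of $4$-connected planar graph counts in the literature / OEIS for the orders where such data exists.

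The main obstacle I expect is sheer combinatorial volume: the number of planar $4$-connected graphs grows quickly, so orders $20$--$22$ may require substantial CPU time, and one must be careful that \texttt{plantri} is invoked with exactly the right connectivity and minimum-degree flags so that the generated family is neither incomplete (which would invalidate the proof) nor bloated with isomorphic duplicates or graphs of the wrong connectivity (which would only waste time). A secondary subtlety is that the algorithm of Section~\ref{sec:algo} must be trusted to correctly certify HIST-freeness rather than merely failing to find a HIST within some limit; since it is a complete branch-and-bound search this is not a real limitation, but it is the point where independent verification matters most. Once these engineering issues are handled, the statement follows immediately: every graph in every list is found to contain a HIST, so no planar $4$-connected graph of order at most $22$ is HIST-free.
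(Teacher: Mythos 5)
Your approach is exactly the paper's: exhaustive generation with \texttt{plantri} followed by the HIST-detection algorithm of Section~\ref{sec:algo}, with cross-checks via independent implementations and the known counts of planar $4$-connected graphs. One correction: planar $4$-connected graphs exist from order $6$ onward (the octahedron is the smallest; see Table~\ref{tab:4-conn_planar}), not order $11$, so the generation must cover all orders $6 \le n \le 22$ --- restricting to $n \ge 11$ as you propose would leave orders $6$--$10$ unverified.
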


Using \texttt{plantri}~\cite{BM07} we generated all planar $4$-connected graphs up to order $22$ and determined none of these were HIST-free using our algorithm in Section~\ref{sec:algo}. The number of planar $4$-connected graphs for each order can be found in Table~\ref{tab:4-conn_planar} in Appendix~\ref{app:4-conn_planar}.
These counts extend the corresponding entry in the Online Encyclopedia of Integer Sequences which were previously only known up to 17 vertices (see: \url{https://oeis.org/A007027}).


It is natural to ask, if all of these graphs contain a HIST, how many HISTs such a graph should necessarily have. Denote by $p(n)$ the minimum number of HISTs in a planar $4$-connected graph of order $n$. We summarise these counts in Table~\ref{tab:min_HISTs_planar_4-conn}. For every entry there is always precisely one graph attaining the given number of HISTs.

For the even orders up to order $18$ this minimum is attained by the antiprism (recall that antiprisms only exist for even orders) which motivates the following section, i.e.\ Section~\ref{sect:antiprisms}, where we establish the number of HISTs in an antiprism. 
A drawing of the graphs on odd orders attaining the minimum number of HISTs can be found in Figure~\ref{fig:4-conn_planar_attaining_min_HISTs} in the Appendix.

\begin{table}[!htb]
\centering
    \begin{tabular}{c| c c c c c c c c c c c c c}
        $n$ & $6$ & $7$ & $8$ & $9$ & $10$ & $11$ & $12$ & $13$ & $14$ & $15$ & $16$ & $17$ & $18$\\\hline
        $p(n)$ & $24$ & $30$ & $48$ & $62$ & $80$ & $64$ & $120$ & $156$ & $168$ & $120$ & $224$ & $398$ & $288$
    
    \end{tabular}
    \caption{Minimum number of HISTs in a planar $4$-connected graph of order $n$.}
    \label{tab:min_HISTs_planar_4-conn}
\end{table}


\subsection{Counting HISTs in antiprisms}
\label{sect:antiprisms}
An \textit{antiprism} is a planar 4-connected even-order graph $(V_k,E_k)$ with
\begin{eqnarray*}
	V_k &=& \{v_0, \ldots, v_{k-1}, w_0,\ldots, w_{k-1}\} \\
	E_k &=& \{v_0v_1, \ldots, v_{k-1}v_0, w_0w_1, \ldots, w_{k-1}w_0, v_0w_0, v_0w_1, v_1w_1, v_1w_2, \ldots, v_{k-1}w_{k-1}, v_{k-1}w_0\}.
\end{eqnarray*}
For instance, the antiprism of order 6 is the octahedron. 

\begin{prop}
    \textit{The antiprism of order $2k$ with $k\geq 3$ has exactly $2k(2k-2)$ HISTs.}
\end{prop}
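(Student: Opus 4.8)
The plan is to describe all HISTs of the antiprism $A=A_{k}$ of order $2k$ explicitly and then count them. Think of $V(A)$ as $k$ \emph{levels}, the $i$-th level being $\{v_i,w_i\}$, with the two $k$-cycles running through the $v$'s resp.\ the $w$'s and the diagonal edges joining consecutive levels; $A$ carries a dihedral symmetry group of order $2k$, which I will use freely, and every edge of $A$ joins two vertices lying in the same level or in consecutive levels.

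First I would pin down the coarse shape of a HIST $T$. Since $A$ is $4$-regular on $2k$ vertices, $T$ has $2k-1$ edges, and, having no $2$-vertex, its numbers $n_3,n_4$ of $3$- and $4$-vertices satisfy $2n_3+3n_4=2k-2$ by a handshake count; hence the set $S$ of non-leaves of $T$ has $|S|=n_3+n_4=k-1-\tfrac{n_4}{2}\le k-1$ and $T$ has at least $k+1$ leaves. Pruning the leaves of $T$ leaves a subtree on $S$, so $A[S]$ is connected, and every leaf of $T$ is adjacent in $A$ to a vertex of $S$. The key structural fact is then that the set $J\subseteq\mathbb Z_k$ of levels meeting $S$ is an \emph{arc} of length $k-1$ or $k-2$: connectivity of $A[S]$ forces $J$ to be connected in the cycle $\mathbb Z_k$, and $|J|\le|S|<k$ makes it a proper arc, while, since every vertex of $A$ lies in $S$ or is hung on $S$, every level lies in $J$ or is adjacent to $J$, so $J$ together with its two neighbouring levels is all of $\mathbb Z_k$, whence $|J|\ge k-2$. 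Consequently a HIST has exactly one empty level (and then $|S|=k-1$, so $n_4=0$), or exactly two consecutive empty levels.

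By rotational symmetry it suffices to count, in the first case, the HISTs whose unique empty level is a fixed level, and in the second case those whose empty levels are a fixed consecutive pair, and to multiply each count by $k$. In the first case $S$ contains exactly one vertex from each of the $k-1$ non-empty levels; checking which diagonal and cycle edges survive shows that $A[S]$ is connected only when, reading from one end of the arc to the other, the chosen vertices first run along the outer $k$-cycle and then along the inner $k$-cycle (an empty prefix or empty suffix being allowed), so $A[S]$ is itself a path and $T[S]$ must equal it. Then the requirement "degree exactly $3$" propagates from both ends of this spine and forces the attachment of the $k+1$ leaves up to a single binary choice in the middle; I would verify that this gives exactly $2$ completions for each of the $k$ possible cutpoints, hence $2k$ HISTs for the fixed empty level and $2k^{2}$ in total for the first case. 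In the second case one argues similarly, now splitting according to whether $|S|=k-1$ (so $n_4=0$ and one level of the arc carries two spine vertices) or $|S|=k-2$ (so $n_4=2$ and each level of the arc carries exactly one spine vertex); determining the possible spines $T[S]$ and their leaf completions in the same fashion is expected to yield $2k-4$ HISTs for the fixed pair of empty levels, i.e.\ $2k^{2}-4k$ in total. Adding the two cases gives $2k^{2}+(2k^{2}-4k)=2k(2k-2)$.

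The routine part is the structural reduction; the real work—and the step where an error is easiest to make—is the leaf-attachment bookkeeping, above all in the second case with its two sub-cases. There one must check that the quotas "degree exactly $3$ (or $4$)" together with the adjacencies of the antiprism rigidify almost everything, treat the two ends of the arc and the degree-$4$ vertices separately, and keep track of small $k$ (in particular $k=3$, where in the second case only the sub-case $|S|=k-1$ survives) so that the final tally comes out to exactly $2k(2k-2)$.
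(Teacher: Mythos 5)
Your structural reduction is sound and, once unwound, it recovers exactly the same description of HISTs as the paper's proof: every HIST is an induced path of $k-1$ vertices of degree $3$ (the ``spine'') with $k+1$ pendant leaves, and each admissible spine extends in exactly two ways. Your bookkeeping by the set of empty levels (one empty level: $k$ spines, the straight ones plus the diagonal-switch ones; two consecutive empty levels: $k-2$ spines with a doubled level) is a legitimate reorganisation of the paper's count by designated path-endpoints, and the per-case totals $2k^2$ and $2k^2-4k$ do tally with the paper's $2k\bigl((k-2)+1\bigr)\cdot 2$.

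However, as written the argument has a genuine gap in your second case. Your framework admits the sub-case $|S|=k-2$, $n_4=2$ (a spine with two $4$-vertices), and you never rule it out or count it --- you simply assert that the second case ``is expected to yield $2k-4$'' HISTs. If that sub-case contributed anything, the total would exceed $2k(2k-2)$, so excluding it is not optional. The paper disposes of $4$-vertices at the very start with a short local/parity argument (a $4$-vertex of $T$ forces its two $w$-neighbours to be leaves, one concludes $T$ has exactly one $4$-vertex, contradicting the parity of $2n_3+3n_4=2k-2$); you could import that, or close the gap inside your own setup: with empty levels $k-2$ and $k-1$, all four vertices $v_{k-2},w_{k-2},v_{k-1},w_{k-1}$ are leaves, and the only possible spine-parent of $v_{k-2}$ is $v_{k-3}$ while the only possible spine-parent of $w_{k-1}$ is $w_0$; but a connected one-vertex-per-level selection on levels $0,\dots,k-3$ must be ``$v$'s then $w$'s'', so it cannot contain both $w_0$ and $v_{k-3}$. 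A secondary, smaller issue is that the ``exactly $2$ completions per spine'' claim is also only asserted; it does hold (the leaf-attachment constraints form a single even cycle in the slot--leaf incidence structure, which has exactly two perfect matchings), but some such verification must appear for the count to be a proof.
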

\begin{proof}
    We denote the vertex $V_k$ and edge set $E_k$ of the antiprism $G$ of order $2k$ as above. Henceforth, we will assume that all indices are taken mod.~$k$. We first show that a HIST in an antiprism cannot have a $4$-vertex. Let $T$ be a HIST of $G$ containing a $4$-vertex $v$. 
Then $v$ must have one or two degree $3$ neighbours. Since $T$ is connected, a $3$-vertex cannot have a degree $4$ neighbour other than $v$ in $T$ and there is exactly one $4$-vertex in $T$. Counting the degree sums, we see that the sum is a multiple of four if there is a $4$-vertex, which implies that there is an even number of edges. However, since $T$ is spanning its number of edges should be $2k-1$ which is a contradiction.

    Similarly, using the degree sum, we see that a HIST $T$ must have $k+1$ $1$-vertices and $k-1$ $3$-vertices. It is easy to see that the $3$-vertices induce a subtree $S$ of $T$, otherwise, $T$ would not be connected. Moreover, $S$ is an induced subgraph of $G$. Indeed, let $u$ and $v$ be two non-adjacent $3$-vertices of $T$ which share an edge in the antiprism. Let $u = v_i$ and $v = v_{i+1}$, then all incident edges of $u$ and $v$ except for $uv$ are in $T$. Then since $w_{i+1}$ has at least two incident edges, it is of degree $3$ and either $w_{i}w_{i+1}$ or $w_{i+1}w_{i+2}$ should be in $T$, but either of these lead to a cycle in $T$. It is straightforward to check that the other cases also lead to a cycle in $T$. It follows that $S$ is an induced path of order $k-1$. 

    Let $S$ be an induced path with endpoint $w_i$ and either $w_iv_i$ or $w_iw_{i+1}$ 
    lie in $S$. In order for $v_{i-2}$ to be in $T$, it needs to be a $1$-vertex, since such a path $S$ can only include $v_{i-2}$ if its order is $k$. Hence, either $v_{i-3}$ or $w_{i-2}$ lie in $S$. For the former case, we have $k-2$ possibilities. These paths are of the form $w_iw_{i+1}\cdots w_jv_jv_{j+1}\cdots v_{i-3}$, where $j = {i, i+1, \ldots, i-3}$. For the latter case, there is only one option, the path $w_iw_{i+1}\cdots w_{i-2}$.

    Assume $S$ to be of the form $w_iw_{i+1}\cdots w_jv_jv_{j+1}\cdots v_{i-3}$. In order for $T$ to be a HIST, $w_j$ must either have a neighbour $w_{j+1}$ or $v_{j-1}$ in $T$. Both of these options completely fix $T$, hence there are precisely two HISTs for every such path $S$.

    Let $S$ be of the form $w_iw_{i+1}\cdots w_{i-2}$. In order for $T$ to be a HIST, $w_i$ must either have neighbours $w_{i-1}$ and $v_{i-1}$ in $T$ or $v_{i-1}$ and $v_{i}$. Both of these options completely fix $T$, hence we also have two HISTs in this case.

    Letting the chosen endpoint be any of the $2k$ vertices of our antiprism, we get 
    \[2k(2(k-2) + 2) = 2k(2k-2)\]
    HISTs. Finally, note that the paths $S$ with endpoint $w_i$ and edges $w_iw_{i-1}$ or $w_iv_{i-1}$ are also paths of the form above but with a different endpoint. Hence, we can conclude that we counted all HISTs of the antiprism. \end{proof}

\subsection{4-regular graphs with or without HISTs}
\label{sec:4-reg-HIST-free}

The fact that the triangle is HIST-free leads to the question whether other $2r$-regular HIST-free graphs exist; this problem was first formulated by Albertson, Berman, Hutchinson, and Thomassen~\cite{ABHT90}. They give an infinite family of 4-regular HIST-free graphs. Such a family of planar graphs had been given earlier by Joffe~\cite{Jo82} but the relevant part of Joffe's thesis cannot be accessed, so hereunder we give a proof of this result. 
We remark that the aforementioned question remains open for $r > 2$. In~\cite{ABHT90}, they were particularly interested in the answer to the question whether connected 6-regular HIST-free graphs exist. They provide infinite families of $(2r+1)$-regular HIST-free graphs for every natural number $r$.

Concerning HIST-critical graphs, we know that exactly one 2-regular such graph exists, and that no such graphs exist that are 3-regular. Whether $k$-regular HIST-critical graphs for $k > 3$ exist is unknown. Theorem~\ref{thm1} shows that there are infinitely many planar HIST-critical graphs with all but four vertices quartic. Unfortunately, we could not find a 4-regular HIST-critical graph, planar or not. 

On the one hand, Malkevitch conjectured that every planar 4-connected graph has a HIST~\cite{Ma79}. On the other hand, he remarked in \cite[Remark 3]{Ma79}, without giving a proof, that there exist planar 3-connected 4-regular graphs that are HIST-free. We now describe such graphs; in particular, we consider the line graph of cubic graphs. 


\begin{prop}
\label{3-conn_counterexample}
\textit{There exist infinitely many $3$-connected $4$-regular HIST-free planar graphs that are the line graph of cubic graphs.}
\end{prop}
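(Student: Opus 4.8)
The plan is to exhibit a concrete cubic graph $H$ whose line graph $L(H)$ is planar, $3$-connected, $4$-regular, and HIST-free, and to argue HIST-freeness via the structure of line graphs of cubic graphs. First I would recall the standard facts: if $H$ is cubic then $L(H)$ is $4$-regular; if moreover $H$ is planar and $3$-connected then $L(H)$ is planar (Whitney/Sedláček-type results on when a line graph is planar apply, since the only obstructions are vertices of degree $\ge 5$ or certain $K_4$-subdivisions, neither of which occurs for cubic $H$), and a short connectivity argument gives that $L(H)$ is $3$-connected when $H$ is $3$-connected and cubic. So the only substantive point is HIST-freeness.

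The key observation I would use is that a spanning tree $T$ of $L(H)$ corresponds to a set of $|E(H)| - 1$ edges of $L(H)$; since each vertex of $L(H)$ is the edge $e = uv$ of $H$, its four neighbours in $L(H)$ split into two pairs — the two other edges of $H$ at $u$, and the two other edges of $H$ at $v$ — forming two triangles $\Delta_u$ and $\Delta_v$ meeting only in $e$. This is exactly the ``each $4$-vertex lies on two adjacent triangles'' structure exploited in the proof of Theorem~\ref{thm1} and in the antiprism argument. The plan is to run precisely that argument: in any putative HIST $T$ of $L(H)$, count degrees to force the existence of a $4$-vertex $e$; then $T$ uses all four edges of $\Delta_u \cup \Delta_v$ at $e$; propagating, each neighbour that must become a $3$-vertex forces another pair of triangle edges, and one chases this forced structure around the graph. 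For a suitably chosen $H$ — I would look for a small cubic planar $3$-connected graph, and the complement-of-triangle behaviour suggests trying $K_4$ or the triangular prism, or more robustly a cubic graph built so that this triangle-propagation is forced to either produce a cycle or fail to reach some vertex — one obtains a contradiction exactly as in Theorem~\ref{thm1}, where the three special non-triangle edges blocked $y$.

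The main obstacle I expect is pinning down a single cubic $H$ for which the triangle-propagation argument genuinely closes off — i.e.\ guaranteeing that \emph{every} way of forcing the structure around a chosen $4$-vertex leads either to a cycle in $T$ or to an uncovered vertex, rather than merely \emph{some} way. The cleanest route is probably to pick $H$ with a symmetric, highly constrained edge-triangle incidence pattern (so that $L(H)$ looks locally like a chain of triangles glued at single vertices, much like the $G_k$ of Theorem~\ref{thm1}) and then invoke essentially verbatim the parity-plus-triangle-chasing argument already established there; alternatively, for the very smallest case one can simply cite a direct (or computational) check that $L(H)$ is HIST-free and that it has the required planarity, $3$-connectivity and $4$-regularity. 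Either way the theoretical content beyond the known lemmas is light: the work is in the choice of $H$ and the bookkeeping of the forced edges.
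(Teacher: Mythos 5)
Your general framework is the right one -- the paper's Lemma~\ref{correspondence} is exactly the formalisation of your observation that each vertex $e=uv$ of $L(H)$ lies on two triangles $\Delta_u,\Delta_v$ meeting only in $e$, and it uses the same parity count to force a $4$-vertex in any putative HIST. But your proposal has a genuine gap at the decisive step: you never actually produce a cubic graph $H$ for which the argument closes, and the two concrete candidates you float both fail. $L(K_4)$ is the octahedron, which has $24$ HISTs (it is the $k=3$ antiprism, covered by the paper's own count $2k(2k-2)$), and the line graph of the triangular prism also has a HIST: the prism contains an induced path on $4$ vertices (e.g.\ $b,a,a',c'$ in the obvious labelling) such that every remaining edge has exactly one end on it, which by the correspondence yields HISTs of the line graph. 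So "triangle-chasing from a forced $4$-vertex" does not, by itself, terminate in a contradiction for small symmetric examples; the obstruction has to come from a global property of $H$.

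The idea you are missing is the paper's reformulation plus pigeonhole. Lemma~\ref{correspondence} shows that for cubic $G$ of order $4k+2$, a HIST of $L(G)$ exists if and only if $G$ has an induced tree $T$ of order $3k+1$ such that every edge of $E(G)\setminus E(T)$ has exactly one end on $T$. The paper then takes $G$ to be the truncation of the triangular prism (replace each vertex by a triangle), a $3$-connected cubic planar graph of order $18=4\cdot 4+2$ whose vertex set is partitioned into six disjoint triangles. Any $13=3\cdot 4+1$ vertices must contain all three vertices of some triangle (else at most $18-6=12$ vertices), so no induced tree of order $13$ exists and $L(G)$ is HIST-free. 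This is a global independence-number-style obstruction, not a local propagation one, and it is the step your proposal would need to supply. Your fallback of a computational check on a specific $L(H)$ would of course also close the gap, but only once a correct $H$ is identified.
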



\begin{lem}
\label{correspondence}
\textit{Let $G$ be a cubic graph of order $4k+2$. Then there is a $1$-to-$3k$ correspondence between an induced tree $T$ in $G$ such that, for every edge in $E(G) \setminus E(T)$, precisely one of its ends lies on $T$, and the HISTs of the line graph $L(G)$. In particular, the set $E(T)$ coincides with the set of $3$- or $4$-vertices in a HIST of $L(G)$.}
\end{lem}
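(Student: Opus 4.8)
The plan is to set up an explicit bijection between HISTs of $L(G)$ and pairs $(T, \text{extra data})$, where $T$ is an induced tree of $G$ with the stated edge-separation property, the extra data accounting for the factor $3k$. First I would recall the basic correspondence between spanning subgraphs of $L(G)$ and certain edge-subsets of $G$: a vertex of $L(G)$ is an edge $e = uv$ of $G$, and two vertices of $L(G)$ are adjacent iff the corresponding edges of $G$ share an endpoint. Since $G$ is cubic, each vertex of $L(G)$ has degree $4$, and for a set $H \subseteq E(L(G))$ the degree of the vertex ``$e$'' in $(V(L(G)), H)$ is the number of selected $L(G)$-edges incident with $e$, i.e.\ the number of $G$-edges adjacent to $e$ and chosen together with $e$ in $H$. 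The key translation is: a spanning tree $\Theta$ of $L(G)$ corresponds to a connected spanning subgraph of $L(G)$ that is a tree; I would describe this as a choice, at each vertex $u$ of $G$, of which of the (at most three) pairs among the three edges at $u$ are ``linked''.

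The main structural step is to analyse what ``no $2$-vertex'' means. A vertex $e=uv$ of $L(G)$ being a $2$-vertex in $\Theta$ means exactly two of its four neighbouring $L(G)$-vertices are joined to it in $\Theta$. I would argue that the HIST condition forces, for each vertex $u$ of $G$, that the three edges at $u$ are handled in one of a few controlled patterns, and that globally these patterns assemble into the induced tree $T$: the claim of the lemma is precisely that $E(T)$ is the set of edges of $G$ that become $3$- or $4$-vertices of the HIST. So I would prove both directions. Forward: given a HIST $\Theta$ of $L(G)$, let $E(T)$ be the set of $\ge 3$-valent vertices of $\Theta$; show $T := (V(G), E(T))$ is a tree (connectedness from connectedness of $\Theta$ restricted to the ``thick'' part, acyclicity from $\Theta$ being a tree plus a counting/parity argument using $|V(G)| = 4k+2$ and the handshake-type identity for HISTs), show $T$ is induced (if two vertices of $T$ were joined in $G$ by an edge $f \notin E(T)$, then $f$ is a leaf-or-less vertex of $\Theta$ with both $T$-endpoints thick, and one derives a cycle or disconnection in $\Theta$), and show the edge-separation property (every $G$-edge off $T$ has at least one, hence exactly one, endpoint on $T$, because a $G$-edge with no endpoint on $T$ would be an isolated-ish vertex of $\Theta$ contradicting spanning-connectedness, and two endpoints on $T$ is excluded by inducedness). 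Conversely, given such a $T$, reconstruct $\Theta$: edges of $T$ give $3$- or $4$-valent vertices, edges off $T$ give $1$-valent vertices, and the tree structure of $T$ together with the one-endpoint-on-$T$ condition guarantees the resulting subgraph of $L(G)$ is connected and acyclic.

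The factor $3k$ is where I expect to spend real effort, and it is the main obstacle. The point is that $T$ alone does not determine $\Theta$: once we know which $L(G)$-vertices are ``thick'' (those in $E(T)$) and which are leaves (edges off $T$), we still must decide how each leaf attaches — a leaf $f = uv$ with, say, $u \in V(T)$ must be joined in $\Theta$ to one of the (up to two) other edges at $u$ that lie in $E(T)$; and at each vertex $u$ of $G$ on $T$ the local wiring among the $T$-edges must be chosen so that the global object is a tree. I would carry out a careful local case analysis at each vertex of $G$ according to its degree in $T$ ($0$, $1$, $2$, or $3$), count the number of consistent local completions, and then show these local counts multiply (the choices being independent across vertices of $G$ once $T$ is fixed, because $T$ is a tree so no cycle constraint couples them) to give exactly $3k$; here $|E(T)| = |V(G)| - 1 = 4k+1$ and $|E(G)| = 3(4k+2)/2 = 6k+3$, so $|E(G)\setminus E(T)| = 2k+2$, and I would track how the leaf-attachment choices and the internal-wiring choices combine — I anticipate the bookkeeping reduces to something like a product over the $2k+\ldots$ relevant vertices that collapses to $3k$ by an Euler-characteristic identity for $T$. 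The honest summary is: the bijective skeleton is routine, but pinning the multiplicity to exactly $3k$ requires the delicate local enumeration, and that is the step I would write out most carefully.
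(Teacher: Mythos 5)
Your bijective skeleton (vertices of $L(G)$ are edges of $G$; the $3$- and $4$-valent vertices of a HIST of $L(G)$ form $E(T)$; the $1$-valent vertices are the edges off $T$) matches the paper's. But there are two concrete problems. First, an arithmetic slip that matters: you take $T$ to be spanning, writing $|E(T)| = |V(G)| - 1 = 4k+1$ and $|E(G)\setminus E(T)| = 2k+2$. The tree $T$ of the lemma is \emph{not} spanning --- the separation property says each edge off $T$ has exactly one end on $T$, so $V(G)\setminus V(T)$ is a nonempty independent set. The correct counts (obtained from $|E(G)| = (|V(T)|-1) + 3(4k+2-|V(T)|) = 6k+3$) are $|V(T)| = 3k+1$, $|E(T)| = 3k$, $|E(G)\setminus E(T)| = 3k+3$. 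Note that $|E(T)| = 3k$ is exactly the multiplicity in the statement, which is the hint your arithmetic hides from you.

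Second, and more seriously, your proposed mechanism for the factor $3k$ would fail. You plan to count local completions at each vertex of $G$ and argue they are ``independent across vertices of $G$ once $T$ is fixed, because $T$ is a tree so no cycle constraint couples them,'' so that the counts multiply. They are not independent, and a product of local counts cannot equal $3k$ anyway. Concretely: a degree count shows every vertex of $V(T)$ carries exactly two edges of the HIST inside the triangle of $L(G)$ surrounding it, i.e.\ a path on the three $G$-edges at that vertex; at a vertex of $T$-degree $2$ there are two admissible local paths and at a vertex of $T$-degree $3$ there are three, but the requirement that every vertex of $E(T)$ have degree $3$ or $4$ in the HIST (no $2$-vertices) couples all these choices globally. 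The resolution --- and the paper's actual argument --- is that the HIST has exactly one $4$-valent vertex (parity: $|V(L(G))| = 6k+3$ is odd), and once you decide which edge $e \in E(T)$ plays that role, the whole HIST is forced (grow $K_{1,4}$ at $e$ and extend uniquely, since at each step only one of the two candidate extensions avoids closing a triangle). Hence the multiplicity is the number of choices of $e$, namely $|E(T)| = 3k$: a single global choice, not a product over vertices. Without identifying this distinguished $4$-vertex as the extra datum, the counting step of your proof does not go through.
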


\begin{proof}
The line graph $L(G)$ has $6k+3$ vertices and is $4$-regular where every edge is in precisely one triangle that surrounds a vertex in $G$. Let $T'$ be a HIST of $L(G)$. By the same argument as in the HIST-freeness proof of Theorem~\ref{thm1}, $T'$ should have a 4-vertex and the other vertices are of degree~1 or 3. Let $S$ be the edges of $G$ corresponding to the 3- or 4-vertices in $T'$. 
The size of $S$ can be calculated as follows: by the degree sum formula $4\times 1 + 3\times (|S|-1) +1\times (6k+3-|S|) = 2|E(T')| = 2(6k+2)$, and hence $|S| = 3k$. 
Since $T'$ is connected, $S$ induces a connected graph $G[S]$. On the one hand, $G[S]$
is adjacent to at most $3k+3$ edges in $E(G) \setminus S$ since $G$ is cubic.
On the other hand, every edge in $E(G) \setminus S$, since it corresponds to a 1-vertex of $T'$, should be adjacent to an edge of $S$ since any 1-vertex is adjacent to a 3- or 4- vertex in $T'$. Hence, 
$|E(G) \setminus S| = 6k+3-3k=3k+3$ implies that, for every $e \in E(G) \setminus S$, precisely one end of $e$ lies on $G[S]$, and therefore $S$ induces a tree in $G$ with the desired property.

Next, let $T$ be an induced tree of $G$ such that for every edge in $E(G) \setminus E(T)$ precisely one of its ends lies on $T$. 
Since any edge in $E(G) \setminus E(T)$ is incident to precisely one vertex in $V(G) \setminus V(T)$, we have $|E(G)| = (|V(T)|-1) + 3(4k+2-|V(T)|) = 6k+3$ and, hence, $|V(T)|=3k+1$. For any edge $e$ in $T$, one can take a tree $T'$ as a subgraph of $L(G)$ recursively: first $T'$ is $K_{1,4}$ with the 4-vertex corresponding to $e$, and next add two pendant edges to a 1-vertex $v$ in $T'$ if $v$ corresponds to an edge in $T$ (note that the choice of two edges is unique since the adding of the other edge makes a triangle in $T'$). This recursive construction finally makes $T'$ a HIST of $L(G)$. Note that $T'$ does not depend on the order of $1$-vertices $v$. Since $|E(T)|=3k$, there are $3k$ choices of $e$ and there is a $1$-to-$3k$ correspondence between $T$ and HISTs of $L(G)$.
\end{proof}

\begin{figure}[th]
	\centering
	\includegraphics[width=12cm]{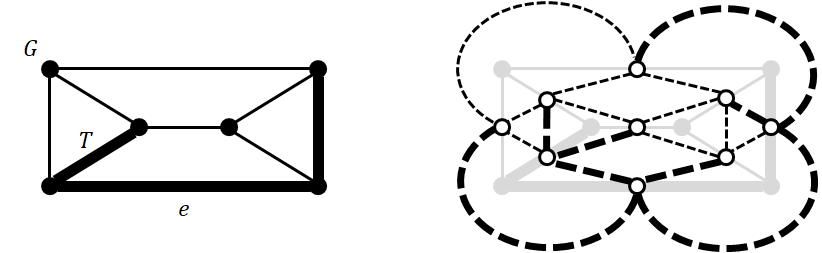}
	\caption{Left: induced tree $T$ (bold edges) in a cubic graph $G$ on which, for every edge in $E(G) \setminus E(T)$, precisely one of its ends lies.  Right: HIST (dotted bold edges) in the line graph $L(G)$ with a 4-vertex corresponding to $e$.}
	\label{fig:3}
\end{figure}	

For an illustration of Lemma~\ref{correspondence}, see Figure~\ref{fig:3}. Note that $T$ gives a partition $V(T) \cup (V(G) \setminus V(T))$ of $V(G)$ such that $V(T)$ induces a tree of order $3k+1$ and $V(G) \setminus V(T)$ is an independent set of order $k+1$.

\begin{proof}[Proof of Proposition~\ref{3-conn_counterexample}]
Let $G$ be a 3-edge connected cubic planar graph of order $4\ell+2~(\ell \ge 1)$ and $H_G$ be its truncated graph, which is obtained from $G$ by replacing each vertex with a triangle. Then $H_G$ is a 3-edge connected cubic planar graph of order $12\ell +6 = 4(3\ell +1)+2$. Now $H_G$ does not have an induced tree of order $8\ell +5$ since every set of $8\ell +5 = 2(4\ell +2)+1$ vertices should have three vertices of a triangle. By Lemma~\ref{correspondence}, we see that the line graph $L(H_G)$ does not have a HIST since there does not exist an induced tree of $H_G$ on $3(3\ell +1)+1 = 9\ell +4 \ge 8\ell +5$ vertices. Thus, $L(H_G)$ is a 3-connected 4-regular HIST-free planar graph. 
\end{proof}

For the smallest example in the proof of Proposition~\ref{3-conn_counterexample}, let $G$ be the graph depicted on the left-hand side of Figure~\ref{fig:3}. Then, $\ell=1$, and we get a 3-connected 4-regular HIST-free planar graph $L(H_G)$ of order $|E(H_G)|=27$. 

\bigskip
In Lemma~\ref{correspondence}, if $|V(G)| = 4k$, then the following analogue holds. Here, a \emph{unicyclic graph} $H$ is a graph with precisely one cycle (say, $C_H$). It can be proved by an argument similar to the proof of Lemma \ref{correspondence}. Therefore, we leave the proof to the reader.

\begin{prop}
\label{analogue}
\textit{Let $G$ be a cubic graph of order $4k$. Then there is a $1$-to-$2|E(C_U)|$ correspondence between a connected induced unicyclic subgraph $U$ of $G$ such that, for every edge in $E(G) \setminus E(U)$, precisely one of its ends lies on $U$, and the HISTs of $L(G)$. 
In particular, for every edge $e \in E(C_U)$, there are precisely two HISTs of $L(G)$ such that the set $E(C_U) \setminus \{ e \}$ coincides with the set of vertices of degree $3$ in the HISTs.
%
}
\end{prop}

Note that $U$ gives a partition $V(U) \cup (V(G) \setminus V(U))$ of $V(G)$ such that $V(U)$ induces a unicyclic graph of order $3k$ and $V(G) \setminus V(U)$ is an independent set of order $k$.

\bigskip

For the 4-connected case, the above method cannot be used to find a 4-regular HIST-free planar graph (i.e., a counterexample to Malkevitch's conjecture), due to the following argument and theorems. In other words, we give a partial affirmative answer to Malkevitch's conjecture for the family of the line graphs of cyclically 4-edge connected cubic graphs. Note that the argument below holds regardless of planarity.

\begin{thm}[Jaeger, \cite{Ja74}]\label{Jaeger}
\textit{Let $G$ be a connected cubic graph of order $n$ and $s(G)$ the maximum number of vertices in a vertex-induced forest of $G$. Then
	$$s(G) \le \left\lfloor \frac{3n-2}{4} \right\rfloor. \qquad (\dagger)$$ 
}
\end{thm}

\begin{thm}[Payan and Sakarovitch, \cite{PS75}]\label{PandS}
\textit{If $G$ is a cyclically $4$-edge-connected cubic graph, then equality holds in $(\dagger)$ in Theorem~\ref{Jaeger}.}
\end{thm}

For a cubic graph $G$, it is well-known and easy to see that the line graph $L(G)$ is 4-connected if and only if $G$ is cyclically 4-edge connected.
Let $G$ be a cyclically 4-edge connected cubic graph of order $n=4k+2$ (resp. $4k$). 
Then by Theorems \ref{Jaeger} and \ref{PandS}, $G$ has an induced forest $F$ with $\left\lfloor \frac{3n-2}{4} \right\rfloor = 3k+1$ (resp.~$3k-1$) vertices. 
When $n=4k+2$, we have $|E(F)| \le 3k$ and $|E(G) \setminus E(F)| \le 3(k+1)$, both of which should attain the equality since $|E(G)| = 6k+3$. 
This implies that $F$ is a tree such that, for every edge in $E(G) \setminus E(F)$, precisely one of its ends lies on $F$. 
When $n=4k$, we have $|E(F)| \le 3k-2$ and $|E(G) \setminus E(F)| \le 3(k+1)$, which implies that one of the following cases occurs: 

(a) $F$ is a tree and $V(G) \setminus V(F)$ induces precisely one edge, say $uv$, or
(b) $F$ is a forest with precisely two components and $V(G) \setminus V(F)$ is an independent set. 
For (a), let $F'$ be the graph induced by $V(F) \cup \{u\}$. 
For (b), let $F'$ be a graph induced by $V(F) \cup \{w\}$, where $w$ has neighbours in each of two components of $F$. 
(Such $w$ exists since $G$ is connected.) 
In both cases, $F'$ is an induced unicyclic subgraph such that, for every edge in $E(G) \setminus E(F)$, precisely one of its ends lies on $F$. 
Thus, in both cases, it follows from Lemma~\ref{correspondence} and Proposition~\ref{analogue} that the following proposition holds.

\begin{prop}
\textit{For every cyclically $4$-edge connected cubic graph, its ($4$-connected $4$-regular) line graph has a HIST.} 
\end{prop}


\subsection{4-connected HIST-free graphs of small genus}
We could not find a counterexample to Malkevitch's conjecture, so we tried to describe a $4$-connected HIST-free graph of small genus. 
Here the (orientable) genus of a graph $G$ is the smallest integer $g \ge 0$ such that $G$ can be embedded on the orientable surface $\mathbb{S}_g$ of genus $g$. Let $G$ be a 4-regular graph. 
The \emph{$K_4$-inflation} of $G$ is to replace each vertex $v$ of $G$ with $K_4$, 
and to join suitable two vertices of two $K_4$'s so that the new edges are in 1-to-1 correspondence with the edges in $G$, see Figure~\ref{fig:replacing}. 

\setcounter{figure}{2}
\begin{figure}[!htb]
\centering
    \begin{subfigure}[t]{\linewidth}
    \centering
  \begin{tikzpicture}
        \node[vertex] (0) at (0, 0) {};
        \node[vertex] (1) at (6.5, 0) {};
        \node[vertex] (2) at (7.5, 0) {};
        \node[vertex] (3) at (7, 0.5) {};
        \node[vertex] (4) at (7, -0.5) {};

        \path[draw]
        (-1, 0) edge node {} (1, 0) 
        (0, -1) edge node {} (0, 1) 
        (6, 0) edge node {} (8, 0) 
        (7, -1) edge node {} (7, 1) 
        (1) edge node {} (3) 
        (3) edge node {} (2) 
        (2) edge node {} (4) 
        (4) edge node {} (1) 
        ;

\draw[very thick,->] (3,0)--(4,0);

\coordinate [label=above left: {$v$}] () at (0);
  \end{tikzpicture}
  \end{subfigure}
    \caption{Replacing a vertex $v$ with $K_4$.}
\label{fig:replacing}
\end{figure}

\begin{figure}[ht]
\centering
\begin{tikzpicture}[scale=0.7]
\def\x{2*sqrt(3)}
\def\y{-3}
\foreach \k in{1,...,3}
 {
    \coordinate (A_\k) at ({-sin(\k*120)},{cos(\k*120)});
    \coordinate (B_\k) at ({-2*sin(\k*120)},{2*cos(\k*120)});
    \coordinate (C_\k) at ({-3*sin(\k*120)},{3*cos(\k*120)});
    \coordinate (D_\k) at ({\x*cos(\k*120)-\y*sin(\k*120)},{\x*sin(\k*120)+\y*cos(\k*120)});
    \coordinate (E_\k) at ({-\y*sin(\k*120)},{\y*cos(\k*120)});
    \coordinate (F_\k) at ({-\x*cos(\k*120)-\y*sin(\k*120)},{-\x*sin(\k*120)+\y*cos(\k*120)});
   \draw[fill=black] (A_\k) circle (2pt);
   \draw[fill=black] (B_\k) circle (2pt);
   \draw[fill=black] (C_\k) circle (2pt);
   \draw[fill=black] (D_\k) circle (2pt);
   \draw[fill=black] (E_\k) circle (2pt);
   \draw[fill=black] (F_\k) circle (2pt);
 };

\draw[thick] (A_1)--(A_2)--(A_3)--(A_1);
\draw[thick] (C_1)--(D_2)--(F_3)--(C_1);
\draw[thick] (C_2)--(D_3)--(F_1)--(C_2);
\draw[thick] (C_3)--(D_1)--(F_2)--(C_3);
\draw[thick] (A_1) to [bend left] (B_1) to [bend left] (A_1);
\draw[thick] (A_2) to [bend left] (B_2) to [bend left] (A_2);
\draw[thick] (A_3) to [bend left] (B_3) to [bend left] (A_3);
\draw[thick] (B_1) to [bend left] (C_1) to [bend left] (B_1);
\draw[thick] (B_2) to [bend left] (C_2) to [bend left] (B_2);
\draw[thick] (B_3) to [bend left] (C_3) to [bend left] (B_3);
\draw[thick] (D_1) to [bend left] (E_1) to [bend left] (D_1);
\draw[thick] (D_2) to [bend left] (E_2) to [bend left] (D_2);
\draw[thick] (D_3) to [bend left] (E_3) to [bend left] (D_3);
\draw[thick] (E_1) to [bend left] (F_1) to [bend left] (E_1);
\draw[thick] (E_2) to [bend left] (F_2) to [bend left] (E_2);
\draw[thick] (E_3) to [bend left] (F_3) to [bend left] (E_3);
\end{tikzpicture}

\caption{A 4-edge-connected 4-regular planar multigraph $G$ without a hamiltonian path.}
\label{fig:nonhamiltonian}
\end{figure}

\begin{figure}[ht]
\centering
\begin{minipage}[h]{0.49\columnwidth}
\centering
\begin{tikzpicture}[scale=0.45]
\def\p{0.5}
\def\q{1}
\def\s{-6}
\foreach \k in{0,...,2}
  \foreach \l in{0,...,5}
 {
    \coordinate (A_\k\l) at ({\p*cos(\k*120)-(\q+\l)*sin(\k*120)},{\p*sin(\k*120)+(\q+\l)*cos(\k*120)});
    \coordinate (B_\k\l) at ({-\p*cos(\k*120)-(\q+\l)*sin(\k*120)},{-\p*sin(\k*120)+(\q+\l)*cos(\k*120)});
    \draw[fill=black] (A_\k\l) circle (2pt);
    \draw[fill=black] (B_\k\l) circle (2pt);
    \draw[thick] (A_\k\l)--(B_\k\l);
 };

\foreach \k in{0,...,2}
  \foreach \r in{-5,-4,-3,-2,4,5}
 {
    \coordinate (C_\k\r) at ({\r*cos(\k*120)-\s*sin(\k*120)},{\r*sin(\k*120)+\s*cos(\k*120)});
    \coordinate (D_\k\r) at ({\r*cos(\k*120)-(\s+1)*sin(\k*120)},{\r*sin(\k*120)+(\s+1)*cos(\k*120)});
    \draw[fill=black] (C_\k\r) circle (2pt);
    \draw[fill=black] (D_\k\r) circle (2pt);
    \draw[thick] (C_\k\r)--(D_\k\r);
 };

\draw[thick] (A_00)--(B_20);
\draw[thick] (A_10)--(B_00);
\draw[thick] (A_20)--(B_10);
\draw[thick] (A_05)--(D_15);
\draw[thick] (A_15)--(D_25);
\draw[thick] (A_25)--(D_05);
\draw[thick] (B_05)--(D_2-5);
\draw[thick] (B_15)--(D_0-5);
\draw[thick] (B_25)--(D_1-5);
\draw[thick] (C_0-5)--(C_25);
\draw[thick] (C_1-5)--(C_05);
\draw[thick] (C_2-5)--(C_15);

\foreach \k in{0,...,2}
 {
  \draw[thick] (A_\k0)--(A_\k5);
  \draw[thick] (B_\k0)--(B_\k5);
  \draw[thick] (A_\k0)--(B_\k1);
  \draw[thick] (A_\k2)--(B_\k3);
  \draw[thick] (A_\k4)--(B_\k5);
  \draw[thick] (B_\k0)--(A_\k1);
  \draw[thick] (B_\k2)--(A_\k3);
  \draw[thick] (B_\k4)--(A_\k5);
  \draw[thick] (C_\k-5)--(C_\k5);
  \draw[thick] (D_\k-5)--(D_\k5);
  \draw[thick] (C_\k-5)--(D_\k-4);
  \draw[thick] (C_\k-3)--(D_\k-2);
  \draw[thick] (C_\k4)--(D_\k5);
  \draw[thick] (D_\k-5)--(C_\k-4);
  \draw[thick] (D_\k-3)--(C_\k-2);
  \draw[thick] (D_\k4)--(C_\k5);
 };
\end{tikzpicture}
\end{minipage}
\begin{minipage}[h]{0.49\columnwidth}
%
\centering
\begin{tikzpicture}[scale=0.45]
\def\p{0.5}
\def\q{1}
\def\s{-6}
\foreach \k in{0,...,2}
  \foreach \l in{0,...,5}
 {
    \coordinate (A_\k\l) at ({\p*cos(\k*120)-(\q+\l)*sin(\k*120)},{\p*sin(\k*120)+(\q+\l)*cos(\k*120)});
    \coordinate (B_\k\l) at ({-\p*cos(\k*120)-(\q+\l)*sin(\k*120)},{-\p*sin(\k*120)+(\q+\l)*cos(\k*120)});
    \draw[fill=black] (A_\k\l) circle (2pt);
    \draw[fill=black] (B_\k\l) circle (2pt);
    \draw[thick] (A_\k\l)--(B_\k\l);
 };

\foreach \k in{0,...,2}
  \foreach \r in{-5,-4,-3,-2,4,5}
 {
    \coordinate (C_\k\r) at ({\r*cos(\k*120)-\s*sin(\k*120)},{\r*sin(\k*120)+\s*cos(\k*120)});
    \coordinate (D_\k\r) at ({\r*cos(\k*120)-(\s+1)*sin(\k*120)},{\r*sin(\k*120)+(\s+1)*cos(\k*120)});
    \draw[fill=black] (C_\k\r) circle (2pt);
    \draw[fill=black] (D_\k\r) circle (2pt);
 };

\draw[thick] (A_00)--(B_20);
\draw[thick] (A_10)--(B_00);
\draw[thick] (A_20)--(B_10);
\draw[thick] (A_05)--(D_15);
\draw[thick] (A_15)--(D_25);
\draw[thick] (A_25)--(D_05);
\draw[thick] (B_05)--(D_2-5);
\draw[thick] (B_15)--(D_0-5);
\draw[thick] (B_25)--(D_1-5);
\draw[thick] (C_0-5)--(C_25);
\draw[thick] (C_1-5)--(C_05);
\draw[thick] (C_2-5)--(C_15);

\foreach \k in{0,...,2}
 {
  \draw[thick] (A_\k0)--(A_\k5);
  \draw[thick] (B_\k0)--(B_\k5);
  \draw[thick] (A_\k0)--(B_\k1);
  \draw[thick] (A_\k2)--(B_\k3);
  \draw[thick] (A_\k4)--(B_\k5);
  \draw[thick] (C_\k-5)--(D_\k-4);
  \draw[thick] (C_\k-3)--(D_\k-2);
  \draw[thick] (C_\k4)--(D_\k5);
 };

\def\t{1.5} 
\def\u{-6.5} 
\def\v{3}
\foreach \k in{0,...,2}
 {
  \coordinate (a_\k) at ({-\t*cos(\k*120)-\u*sin(\k*120)},{-\t*sin(\k*120)+\u*cos(\k*120)});
  \coordinate (b_\k) at ({\t*cos(\k*120)-\u*sin(\k*120)},{\t*sin(\k*120)+\u*cos(\k*120)});
  \coordinate (c_\k) at ({\t*cos(\k*120)-(\u+2)*sin(\k*120)},{\t*sin(\k*120)+(\u+2)*cos(\k*120)});
  \coordinate (d_\k) at ({-\t*cos(\k*120)-(\u+2)*sin(\k*120)},{-\t*sin(\k*120)+(\u+2)*cos(\k*120)});
  \coordinate (a'_\k) at ({-\t*cos(\k*120)-(\s*sin(\k*120)},{-\t*sin(\k*120)+\s*cos(\k*120)});
  \coordinate (b'_\k) at ({\t*cos(\k*120)-\s*sin(\k*120)},{\t*sin(\k*120)+\s*cos(\k*120)});
  \coordinate (c'_\k) at ({\t*cos(\k*120)-(\s+1)*sin(\k*120)},{\t*sin(\k*120)+(\s+1)*cos(\k*120)});
  \coordinate (d'_\k) at ({-\t*cos(\k*120)-(\s+1)*sin(\k*120)},{-\t*sin(\k*120)+(\s+1)*cos(\k*120)});
  \coordinate (a^_\k) at ({-\u*sin(\k*120)},{\u*cos(\k*120)});
  \coordinate (b^_\k) at ({\t*cos(\k*120)-(\u+1)*sin(\k*120)},{\t*sin(\k*120)+(\u+1)*cos(\k*120)});
  \coordinate (c^_\k) at ({-(\u+2)*sin(\k*120)},{(\u+2)*cos(\k*120)});
  \coordinate (d^_\k) at ({-\t*cos(\k*120)-(\u+1)*sin(\k*120)},{-\t*sin(\k*120)+(\u+1)*cos(\k*120)});
  \coordinate (e'_\k) at ({\v*cos(\k*120)-8*sin(\k*120)},{\v*sin(\k*120)+8*cos(\k*120)});
  \coordinate (f'_\k) at ({\v*cos(\k*120)-8.2*sin(\k*120)},{\v*sin(\k*120)+8.2*cos(\k*120)});
  \coordinate (g'_\k) at ({\v*cos(\k*120)-8.4*sin(\k*120)},{\v*sin(\k*120)+8.4*cos(\k*120)});
  \coordinate (e^_\k) at ({-\v*cos(\k*120)-8*sin(\k*120)},{-\v*sin(\k*120)+8*cos(\k*120)});
  \coordinate (f^_\k) at ({-\v*cos(\k*120)-8.2*sin(\k*120)},{-\v*sin(\k*120)+8.2*cos(\k*120)});
  \coordinate (g^_\k) at ({-\v*cos(\k*120)-8.4*sin(\k*120)},{-\v*sin(\k*120)+8.4*cos(\k*120)});
  \draw[red,very thick] (a_\k)--(b_\k)--(c_\k)--(d_\k)--(a_\k);
  \draw[red,->] (a_\k)--(a^_\k);
  \draw[red,->] (d_\k)--(c^_\k);
  \draw[red,->>] (a_\k)--(d^_\k);
  \draw[red,->>] (b_\k)--(b^_\k);
  \draw[thick] (C_\k-5)--(a'_\k);
  \draw[thick] (D_\k-5)--(d'_\k);
  \draw[thick] (C_\k5)--(b'_\k);
  \draw[thick] (D_\k5)--(c'_\k);
  \draw[thick] (D_\k-3) to [bend left] ({-\t*5/5*cos(\k*120)-(\u+2)*sin(\k*120)},{-\t*5/5*sin(\k*120)+(\u+2)*cos(\k*120)});
  \draw[thick] (D_\k-5) to [bend left] ({-\t*4/5*cos(\k*120)-(\u+2)*sin(\k*120)},{-\t*4/5*sin(\k*120)+(\u+2)*cos(\k*120)});
  \draw[thick] (B_\k4) to [bend left] ({-\t*3/5*cos((\k+2)*120)-(\u+2)*sin((\k+2)*120)},{-\t*3/5*sin((\k+2)*120)+(\u+2)*cos((\k+2)*120)});
  \draw[thick] (B_\k2) to [bend left] ({-\t*2/5*cos((\k+2)*120)-(\u+2)*sin((\k+2)*120)},{-\t*2/5*sin((\k+2)*120)+(\u+2)*cos((\k+2)*120)});
  \draw[thick] (B_\k0) to [bend left] ({-\t*1/5*cos((\k+2)*120)-(\u+2)*sin((\k+2)*120)},{-\t*1/5*sin((\k+2)*120)+(\u+2)*cos((\k+2)*120)});
  \draw[thick] (A_\k1) to [bend right] ({\t*1/5*cos((\k+1)*120)-(\u+2)*sin((\k+1)*120)},{\t*1/5*sin((\k+1)*120)+(\u+2)*cos((\k+1)*120)});
  \draw[thick] (A_\k3) to [bend right] ({\t*2/5*cos((\k+1)*120)-(\u+2)*sin((\k+1)*120)},{\t*2/5*sin((\k+1)*120)+(\u+2)*cos((\k+1)*120)});
  \draw[thick] (A_\k5) to [bend right] ({\t*3/5*cos((\k+1)*120)-(\u+2)*sin((\k+1)*120)},{\t*3/5*sin((\k+1)*120)+(\u+2)*cos((\k+1)*120)});
  \draw[thick] (D_\k4) to [bend right] ({\t*4/5*cos(\k*120)-(\u+2)*sin(\k*120)},{\t*4/5*sin(\k*120)+(\u+2)*cos(\k*120)});
  \draw[thick] (C_\k-2) to [bend right] ({-\t*5/5*cos(\k*120)-\u*sin(\k*120)},{-\t*5/5*sin(\k*120)+\u*cos(\k*120)});
  \draw[thick] (C_\k-4) to [bend right] ({-\t*4/5*cos(\k*120)-\u*sin(\k*120)},{-\t*4/5*sin(\k*120)+\u*cos(\k*120)});
  \draw[thick] (C_\k5) to [bend left] ({\t*4/5*cos(\k*120)-\u*sin(\k*120)},{\t*4/5*sin(\k*120)+\u*cos(\k*120)});
  \draw[thick] ({\t*3/5*cos((\k+1)*120)-\u*sin((\k+1)*120)},{\t*3/5*sin((\k+1)*120)+\u*cos((\k+1)*120)}) to [bend right] (e'_\k);
  \draw[thick] (e'_\k) to [bend right] (e^_\k);
  \draw[thick] (e^_\k) to [bend right] ({-\t*3/5*cos((\k+2)*120)-\u*sin((\k+2)*120)},{-\t*3/5*sin((\k+2)*120)+\u*cos((\k+2)*120)});
  \draw[thick] ({\t*2/5*cos((\k+1)*120)-\u*sin((\k+1)*120)},{\t*2/5*sin((\k+1)*120)+\u*cos((\k+1)*120)}) to [bend right] (f'_\k);
  \draw[thick] (f'_\k) to [bend right] (f^_\k);
  \draw[thick] (f^_\k) to [bend right] ({-\t*2/5*cos((\k+2)*120)-\u*sin((\k+2)*120)},{-\t*2/5*sin((\k+2)*120)+\u*cos((\k+2)*120)});
  \draw[thick] ({\t*1/5*cos((\k+1)*120)-\u*sin((\k+1)*120)},{\t*1/5*sin((\k+1)*120)+\u*cos((\k+1)*120)}) to [bend right] (g'_\k);
  \draw[thick] (g'_\k) to [bend right] (g^_\k);
  \draw[thick] (g^_\k) to [bend right] ({-\t*1/5*cos((\k+2)*120)-\u*sin((\k+2)*120)},{-\t*1/5*sin((\k+2)*120)+\u*cos((\k+2)*120)});
 };

\foreach \k in{0,...,2}
  \foreach \r in{-5,-4,-3,-2,4,5}
    \draw[thick] (C_\k\r)--(D_\k\r);
\end{tikzpicture}
\end{minipage}
\caption{Left: 4-connected HIST-free graph $H_G$ of genus 3. Right: graph $H_G$ embedded on the orientable surface $\mathbb{S}_3$. The three (red) bold rectangles represent the three handles; in each rectangle, identify two pairs of sides along the arrows, respectively.} 
\label{fig:K_4-inflation}
\end{figure}

\begin{prop}\label{lem:1}
\textit{Let $G$ be a $4$-regular planar multigraph without a hamiltonian path. Then the $K_4$-inflation $H_G$ of $G$ is HIST-free.}
\end{prop}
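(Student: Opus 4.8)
## Proof Proposal

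The plan is to argue by contradiction: suppose the $K_4$-inflation $H_G$ has a HIST $T$, and then extract from $T$ a hamiltonian path of $G$, contradicting the hypothesis. The key structural observation is that each copy of $K_4$ replacing a vertex $v$ of $G$ contributes four vertices to $H_G$, three of which are internal (they send one edge out to another copy) — so I first want to understand how $T$ can look when restricted to a single $K_4$-block. Since $T$ is a spanning tree with no $2$-valent vertices, and since each vertex of a $K_4$-block has degree $4$ in $H_G$ (three edges inside the block plus one edge leaving it, except we must be careful: in $K_4$ every vertex has degree $3$ inside, and exactly one of the four vertices of the block is the one whose incident $G$-edges... no — each of the four $K_4$-vertices receives exactly one external edge, since $v$ has degree $4$ in $G$). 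So every vertex of $H_G$ has degree $4$: three neighbours in its own $K_4$ and one neighbour in an adjacent $K_4$.

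The main step is a local analysis of $T \cap K_4^{(v)}$ for each $v$. Because $T$ is connected and spanning, $T$ must enter and leave each block; I would count, for each block $B = K_4^{(v)}$, the number $k_v$ of external ($G$-corresponding) edges of $T$ incident to $B$. Summing over all blocks, $\sum_v k_v = 2 e_T^{\mathrm{ext}}$ where $e_T^{\mathrm{ext}}$ is the number of external edges in $T$; since $T$ has $4|V(G)|-1$ edges and there are $3|V(G)|$ internal edges available, $e_T^{\mathrm{ext}} \ge |V(G)|-1$, and since $T$ is acyclic and contracting each block yields a connected spanning subgraph of $G$, in fact the contracted graph is a spanning tree of $G$, so $e_T^{\mathrm{ext}} = |V(G)| - 1$ exactly and the external edges of $T$ form a spanning tree of $G$. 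Now the degree-$\ne 2$ condition forces each block to behave rigidly: a $K_4$ with some internal edges of $T$ together with the external stubs must have no vertex of degree exactly $2$ in $T$; checking the few cases of how a subforest of $K_4$ can combine with $k_v \in \{1,2,3,4\}$ external stubs without creating a $2$-valent vertex and without a cycle, I expect to find that the contracted tree of $G$ has maximum degree $2$, i.e.\ it is a hamiltonian path of $G$ — the desired contradiction.

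The hard part will be the case analysis inside a single $K_4$-block: I need to show that whenever the external degree $k_v$ of a block is at least $3$, some vertex of that block is forced to have $T$-degree $2$, or a cycle is forced. Concretely, if three or four external stubs land on a block, then those three or four vertices already have $T$-degree $\ge 1$ from outside, and connecting the block internally without making anyone $2$-valent and without a cycle inside $K_4$ (which has only $4$ vertices and $6$ edges) is tightly constrained; I would enumerate the isomorphism types of forests on $K_4$ and cross them with the stub patterns. I expect this finite check to yield exactly: every block has external degree $\le 2$, hence the contracted spanning tree of $G$ has all degrees $\le 2$, hence is a hamiltonian path — contradiction. One subtlety to handle carefully is blocks of external degree $1$ (a "pendant" block): there the single stub vertex needs degree $\ge 3$ inside $K_4$, forcing the other structure, and I should confirm this is consistent with the spanning/acyclic constraints and corresponds to an endpoint of the hamiltonian path.
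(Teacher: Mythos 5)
Your overall strategy (contract each $K_4$-block and read off a hamiltonian path of $G$) is essentially the paper's, but the step you defer to a ``finite check'' does not come out the way you predict, and this is a genuine gap. Consider the configuration in which \emph{all four} external edges at a block lie in $T$ while the block contains \emph{no} internal edge of $T$, so that all four vertices of that $K_4$ are leaves of $T$, each hanging into a neighbouring block. This has no $2$-valent vertex and creates no cycle, so it passes every local test you propose, yet it contracts to a vertex of degree $4$. Hence the local case analysis alone cannot establish ``external degree $\le 2$ at every block''; only external degree exactly $3$ (and, trivially, $0$) is locally excluded. A second, smaller, unjustified claim is that the contraction is a spanning \emph{tree}: $T$ restricted to a block need not be connected (e.g.\ an isolated stub leaf plus a cherry centred at the other stub vertex, with two external edges), so the contracted graph can contain a cycle. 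That part is harmless --- a connected spanning subgraph of maximum degree $2$ is a hamiltonian path or cycle, and either contradicts the hypothesis --- but it should be stated.

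To close the real gap you need a global count on top of the local classification. Writing $c_v$ for the number of components of $T$ restricted to the block of $v$ and $k_v$ for the number of external $T$-edges at that block, one has $4|V(G)|-1 = |E(T)| = \sum_v(4-c_v) + \tfrac12\sum_v k_v$, i.e.\ $\sum_v (k_v - 2c_v) = -2$. The completed local analysis shows the only admissible block types contribute $k_v-2c_v$ equal to $0$ (a path through the block with both stub vertices internal), $-1$ (a star centred at the unique stub vertex), $-2$ (isolated stub leaf plus cherry), or $-4$ (the four-leaves configuration). Since every contribution is nonpositive and the total is $-2$, the four-leaves type cannot occur, and the two surviving combinations (two pendant blocks and the rest through-blocks, or one leaf-plus-cherry block and the rest through-blocks) contract to a hamiltonian path, respectively a hamiltonian cycle, of $G$ --- both yielding a hamiltonian path and hence a contradiction. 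With this counting step added your argument is correct; without it, the case $k_v=4$ is unrefuted.
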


\begin{proof}
Let $v$ be a vertex of $G$ and $e_1, e_2, e_3, e_4$ be the four edges incident to $v$. 
For a subtree $T$ of $H_G$ without 2-vertices, it is impossible that at least three edges of $\{e_1, e_2, e_3, e_4\}$ are in $T$ if they are connected by the edges of $K_4$ corresponding to $v$. 
So it is not difficult to see that $T$ corresponds to a subpath $P$ of $G$. 
Since $G$ has no hamiltonian path, $P$ cannot span the vertices of $G$ and $T$ cannot span the vertices of $H_G$.
\end{proof}

\begin{thm}
\label{genus3}
\textit{There exist infinitely many $4$-connected HIST-free graphs of genus $3$}.
\end{thm}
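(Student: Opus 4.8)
The plan is to invoke Proposition~\ref{lem:1}: it suffices to exhibit a $4$-regular graph $G$ without a Hamiltonian path whose $K_4$-inflation $H_G$ is $4$-connected, and then to bound the genus of $H_G$ above by $39$. Indeed $H_G$ is automatically $4$-regular, and Proposition~\ref{lem:1} makes it HIST-free the moment $G$ is non-traceable, so only the connectivity and the genus of $H_G$ remain to be arranged.

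For the connectivity: inflation replaces each vertex $v$ by a $4$-clique $Q_v$ with one ``port'' per edge of $G$ at $v$. If $G$ is $4$-connected (less is actually needed, but this is clean), then $H_G$ is $4$-connected, which I would check by taking a putative $3$-cut $\{u_1,u_2,u_3\}$ of $H_G$ and splitting into cases according to how $u_1,u_2,u_3$ distribute among the cliques: if all lie in one $Q_v$ the remaining vertex of $Q_v$ still carries its external edge; in the mixed cases each partially destroyed clique keeps at least two external edges and stays attached; and in the all-distinct case $G$ minus three vertices is still connected. So the graph I need is $4$-regular, $4$-connected and non-traceable. The catch is that the toughness-type witness for non-traceability (a vertex set leaving too many components) cannot be used here, since it would immediately hand $H_G$ a cut of size $<4$; hence $G$ must be non-traceable for a subtler reason, and one should take a suitable known or purpose-built example (e.g.\ one obtained by a Meredith-type substitution into a small, well-chosen graph).

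For the genus: fix an embedding of $G$ on $\mathbb{S}_{g_0}$ with $g_0$ smallest possible; note $g_0\ge 1$, since a $4$-connected planar graph is Hamiltonian by Tutte's theorem. Drawing each $K_4$ inside a small disk with its four ports on the boundary forces exactly one crossing per vertex, because $K_4$ is not outerplanar; arranging, for a fixed (near-)perfect matching $M$ of $G$, that the crossing at $v$ sits on $v$'s $M$-edge, each $M$-edge then carries two such crossings, and — as in Figure~\ref{fig:replacing} — a single handle laid alongside an edge absorbs the two crossings on it. Hence $H_G$ embeds on a surface of genus at most $g_0+\lceil |V(G)|/2\rceil$. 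Taking $G$ of small genus and of order at most roughly $70$ then yields genus at most $39$.

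The main obstacle is the graph $G$ itself: one must simultaneously secure $4$-regularity, non-traceability, $4$-connectivity, small order and small genus. Reconciling non-traceability with $4$-connectivity is the crux, since the easy route to the former is precisely the obstruction that would spoil the latter, so a genuinely subtle non-traceable graph is needed; pinning down such a concrete small example, then discharging the routine-but-fiddly case analysis that its inflation has no $3$-cut and the crossing-to-handle bookkeeping, is where the real work sits.
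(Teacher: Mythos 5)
Your overall strategy is the paper's: invoke Proposition~\ref{lem:1}, check that the $K_4$-inflation is $4$-connected, and trade the one unavoidable crossing per inflated vertex for handles along a perfect matching. But there is a genuine gap, and it sits exactly where you locate ``the real work'': you never produce the graph $G$. Worse, your reasoning talks you into requiring far more of $G$ than is needed. Your claim that a toughness-type witness for non-traceability ``would immediately hand $H_G$ a cut of size $<4$'' is false: a vertex cut $S$ of $G$ does not translate into a small vertex cut of $H_G$, because each clique $Q_v$ with $v\in S$ is a $K_4$ in which any two port vertices are adjacent, so separating the external attachments of $Q_v$ forces the removal of at least three vertices of $Q_v$. (This connectivity boost is precisely the point of Meredith-type inflations.) Consequently $G$ need not be $4$-connected at all --- $4$-regularity together with $3$-connectedness (which for $4$-regular graphs forces $4$-edge-connectedness by parity of edge cuts) already suffices for $H_G$ to be $4$-connected, and your own case analysis only ever uses that $G$ minus three \emph{edges} stays connected. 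By insisting on a $4$-connected, non-traceable, $4$-regular $G$ you have set yourself a much harder existence problem, and the proof cannot close without exhibiting a concrete such graph; a gesture towards ``a suitable known or purpose-built example'' does not discharge this.

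The paper resolves this by taking $G$ to be the $3$-connected $4$-regular \emph{planar} non-traceable graph on $78$ vertices from \cite[Fig.~11]{VZ18}: planarity gives a drawing of $H_G$ with exactly one crossing per inflated vertex, a perfect matching of size $39$ then absorbs all $78$ crossings via $39$ handles as in Figure~\ref{fig:replacing}, and $3$-connectedness plus $4$-regularity of $G$ yields $4$-connectedness of $H_G$. Your handle bookkeeping and your appeal to Proposition~\ref{lem:1} are fine; what is missing is the concrete input graph, and the route you propose for finding one is blocked by the erroneous connectivity constraint described above.
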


\begin{proof}
Let $G$ be the $4$-edge-connected $4$-regular planar multigraph on $18$ vertices, depicted in Figure~\ref{fig:nonhamiltonian}. It is straightforward to see that $G$ has no hamiltonian path. 
By Proposition~\ref{lem:1}, the $K_4$-inflation $H_G$ of $G$ (see the left-hand side of Figure~\ref{fig:K_4-inflation}) is HIST-free. 
Since $G$ is $4$-edge connected, $H_G$ is $4$-connected.
The graph $H_G$ can be embedded on the orientable surface $\mathbb{S}_{3}$ (triple torus) as depicted on the right-hand side of Figure~\ref{fig:K_4-inflation}

%

In~\cite{Br22} Brinkmann developed a practical algorithm for computing the genus of a graph. Using Brinkmann's algorithm we determined that $H_G$ cannot be embedded on an orientable surface of genus at most~2, so the genus of $H_G$ is precisely $3$.

Finally, the graph $H_G$ can easily be extended to an infinite family; in $G$, as depicted in Figure~\ref{fig:nonhamiltonian}, replace two consecutive pairs of parallel edges with any number of pairs of parallel edges.
\end{proof}

\section*{Acknowledgements}
We would like to thank Andreas Awouters for providing us with an independent implementation of the algorithm from Section~\ref{sec:algo} for counting the number of HISTs in a graph.
Jan Goedgebeur and Jarne Renders are supported by Internal Funds of KU Leuven. 
Kenta Noguchi is partially supported by JSPS KAKENHI Grant Number JP21K13831.
Several of the computations for this work were carried out using the supercomputer infrastructure provided by the VSC (Flemish Supercomputer Center), funded by the Research Foundation Flanders (FWO) and the Flemish Government.


\clearpage

\appendix
\section{Appendix}

\subsection{Correctness tests}\label{app:correctness}
We performed various tests for verifying the correctness of the implementation of our algorithm described in Section~\ref{sec:algo} for counting the number of HISTs and testing HIST-criticality. Our implementation is available on GitHub~\cite{GNRZ23}. We will call this Implementation~A. An independent implementation of the algorithm which varies only slightly in the way edges are chosen to be added to the subtree was written by Andreas Awouters. We will call this Implementation~B. 

We also implemented another backtracking algorithm which counts the number of HISTs in a graph based on an algorithm for the generation of spanning trees by Kapoor and Ramesh~\cite{KR95}. We call this Implementation~C. It starts by creating an initial spanning tree $T$ of the graph $G$ using depth first search. This does not need to be a HIST, but to have the correct counts, we check whether it is one. We apply the recursive algorithm to such a tree $T$, where some of the edges of $G$ are marked with ``in'' or with ``out''. ``In'' meaning it will remain in the trees generated by this branch of the search tree. ``Out'' meaning it will never belong to the trees generated by this branch of the search tree.

For a call of the recursive algorithm, we choose a non-edge $e$ of $T$ which has not been marked as ``out'' and compute its \emph{fundamental cycle} in $T$, i.e. the unique cycle containing $e$ in $T+e$. Denote its edges by $e, f_1, \ldots, f_k$. Let $f_i$ be the first edge not marked as ``in''. We mark $e$ as ``in'' and $f_i$ as ``out'' and recursively apply the algorithm to $T+e-f_i$. After this, we mark $f_i$ as ``in'' and mark the next edge $f_j$ not marked ``in'' as ``out'' and apply the algorithm recursively to $T+e-f_j$. Then we also mark $f_j$ as ``in'', etc. until we have done this for all edges not marked ``in'' of the fundamental cycle. Finally, we unmark the edges we just marked as ``in'' and mark $e$ as ``out'' and apply the recursive algorithm to $T$.

Because the edges marked ``in'' and ``out'' lay restrictions on the edges present in the spanning trees we generate in branches of the search tree, we can apply similar pruning rules as described in the algorithm of Section~\ref{sec:algo}. For example when a vertex $v$ has two incident edges marked ``in'' and all other incident edges marked ``out'', then we can already backtrack, since all trees which will be generated in this branch of the search space will have $v$ as a degree $2$ vertex. 

First of all, since all algorithms are backtracking algorithms based on algorithms for generating spanning trees, we can remove the pruning criteria specifically for HISTs and see if they correctly count the number of spanning trees of graphs. We verified this for all algorithms for a large sample of graphs, whose number of spanning trees can easily be computed using Kirchhoff's Matrix Tree Theorem.

We used Implementations~B and C to verify the counts of Table~\ref{tab:2-conn_HIST-critical} obtained by Implementation~A. Since Implementations~B and C are a bit slower than Implementation~A, we were not able to double-check all counts, but we verified the following.

In the general case both B and C verified the counts up to and including order $11$, for girth at least $4$ both implementations verified the counts up to order $14$, for girth at least $5$ both implementations verified the counts up to and including order $17$, for girth at least $6$ both implementations verified the counts up to and including order $19$ and for girth at least $7$ both implementations verified the counts up to and including order $21$.

We also verified the counts of Table~\ref{tab:counts_hist-crit_2conn_planar} obtained by Implementation~A. Both Implementations~B and C verified the counts up to and including order $13$.

Implementations~B and C verified the counts of Table~\ref{tab:min_HISTs_planar_4-conn} up to and including order $14$, hence also verifying the results of Proposition~$4$ up to this order.

\clearpage

\subsection{Smallest HIST-critical graphs}

\begin{figure}[!htb]
    \centering
    \newcommand{\y}{1.5}
    \begin{tikzpicture}
        \node[regular polygon, regular polygon sides=3, minimum size=4cm, draw, black] at (0,0) (triangle) {}; 
        \foreach \i in {1,2,...,3}
            \node[vertex] at (triangle.corner \i) {};
    \end{tikzpicture}\qquad
    \begin{tikzpicture}[scale = 0.5, rotate=90]
        \node[vertex] (0) at (5.142857142857142, -0.14285714285714235) {};
        \node[vertex] (1) at (-0.4593643080279666, -1.9089917715871927) {};
        \node[vertex] (2) at (-0.4593643080279666, 1.6267472985030238) {};
        \node[vertex] (3) at (0.1445926513611342, -5.142857142857142) {};
        \node[vertex] (4) at (0.1445926513611342, 4.857142857142858) {};
        \node[vertex] (5) at (1.841919968264987, -0.14285714285714235) {};
        \node[vertex] (6) at (-4.857142857142858, -0.14285714285714235) {};

         \path[draw]
        (0) edge node {} (3) 
        (0) edge node {} (4) 
        (0) edge node {} (5) 
        (1) edge node {} (3) 
        (1) edge node {} (5) 
        (1) edge node {} (6) 
        (2) edge node {} (4) 
        (2) edge node {} (5) 
        (2) edge node {} (6) 
        (3) edge node {} (6) 
        (4) edge node {} (6) 
        ;

    \end{tikzpicture}\qquad
    \begin{tikzpicture}[scale=0.5]

        \node[vertex] (0) at (5.142857142857142, -0.14115124043476168) {};
        \node[vertex] (1) at (0.14285714285714235, 2.1276989813325535) {};
        \node[vertex] (2) at (-4.857142857142858, -0.14115124043476168) {};
        \node[vertex] (3) at (1.89251607873709, -0.6768046010625337) {};
        \node[vertex] (4) at (0.14285714285714235, -5.142857142857142) {};
        \node[vertex] (5) at (0.14285714285714235, 4.857142857142858) {};
        \node[vertex] (6) at (-1.610212434223348, -0.6768046010625337) {};

         \path[draw]
        (0) edge node {} (3) 
        (0) edge node {} (4) 
        (0) edge node {} (5) 
        (1) edge node {} (3) 
        (1) edge node {} (5) 
        (1) edge node {} (6) 
        (2) edge node {} (4) 
        (2) edge node {} (5) 
        (2) edge node {} (6) 
        (3) edge node {} (4) 
        (3) edge node {} (6) 
        (4) edge node {} (6) 
        ;

    \end{tikzpicture}\qquad
    \begin{tikzpicture}[scale=0.5]

        \node[vertex] (0) at (5.142857142857142, -0.14115124043476168) {};
        \node[vertex] (1) at (0.14285714285714235, 2.1276989813325535) {};
        \node[vertex] (2) at (-4.857142857142858, -0.14115124043476168) {};
        \node[vertex] (3) at (1.89251607873709, -0.6768046010625337) {};
        \node[vertex] (4) at (0.14285714285714235, -5.142857142857142) {};
        \node[vertex] (5) at (0.14285714285714235, 4.857142857142858) {};
        \node[vertex] (6) at (-1.610212434223348, -0.6768046010625337) {};
        \node[vertex] (7) at ($(2)!0.5!(5)$) {};
        \node[vertex] (8) at ($(3)!0.5!(7)$) {};

        \begin{scope}[on background layer]
         \path[draw]
        (0) edge node {} (3) 
        (0) edge node {} (4) 
        (0) edge node {} (5) 
        (1) edge node {} (3) 
        (1) edge node {} (5) 
        (1) edge node {} (6) 
        (2) edge node {} (4) 
        (2) edge node {} (5) 
        (2) edge node {} (6) 
        (3) edge node {} (4) 
        (3) edge node {} (6) 
        (4) edge node {} (6) 
        (3) edge node {} (7)
        ;
        \end{scope}

    \end{tikzpicture}\qquad
    \begin{tikzpicture}[scale=0.5]

        \node[vertex] (0) at (5.142857142857142, -0.14115124043476168) {};
        \node[vertex] (1) at (0.14285714285714235, 2.1276989813325535) {};
        \node[vertex] (2) at (-4.857142857142858, -0.14115124043476168) {};
        \node[vertex] (3) at (1.89251607873709, -0.6768046010625337) {};
        \node[vertex] (4) at (0.14285714285714235, -5.142857142857142) {};
        \node[vertex] (5) at (0.14285714285714235, 4.857142857142858) {};
        \node[vertex] (6) at (-1.610212434223348, -0.6768046010625337) {};
        \node[vertex] (7) at ($(2)!0.5!(5)$) {};
        \node[vertex] (8) at ($(3)!0.5!(7)$) {};

        \begin{scope}[on background layer]
         \path[draw]
        (0) edge node {} (3) 
        (0) edge node {} (4) 
        (0) edge node {} (5) 
        (1) edge node {} (3) 
        (1) edge node {} (5) 
        (1) edge node {} (6) 
        (2) edge node {} (4) 
        (2) edge node {} (5) 
        (2) edge node {} (6) 
        (3) edge node {} (4) 
        (3) edge node {} (6) 
        (4) edge node {} (6) 
        (3) edge node {} (7)
        (0) edge node {} (1)
        ;
        \end{scope}

    \end{tikzpicture}
    \caption{The five HIST-critical graphs with smallest order.}\label{fig:5_smallest_HIST-critical}
\end{figure}
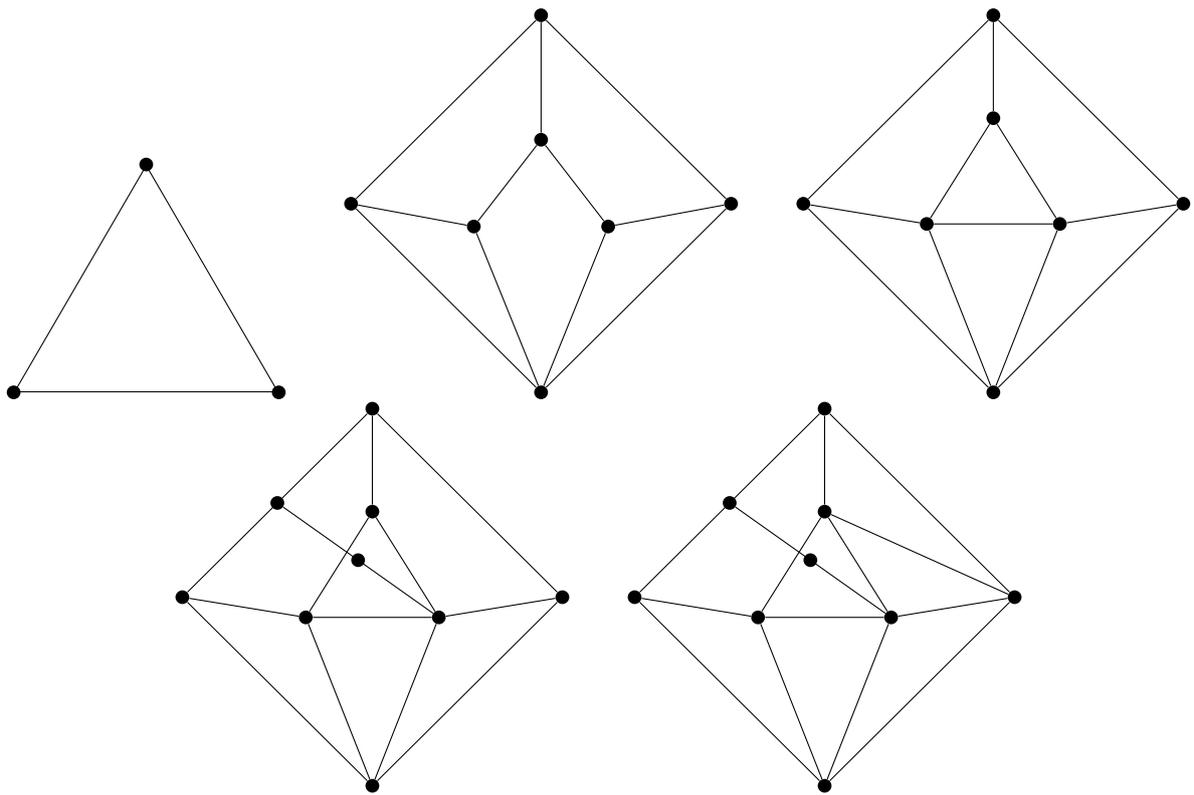

\clearpage
\subsection{HIST-critical graphs with a specific girth}

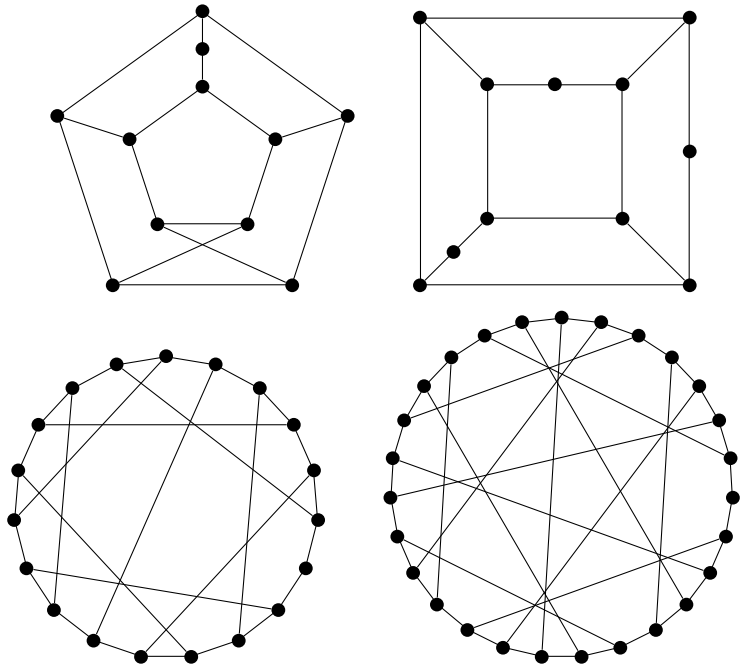
\begin{figure}[!htb]
	\centering
	\begin{tikzpicture}
        \node[regular polygon, regular polygon sides=5, minimum size=2cm, draw, black] at (0,0) (triangle) {}; 
        \foreach \i in {1,2,...,5}
            \node[vertex] (i\i) at (triangle.corner \i) {};
        \node[regular polygon, regular polygon sides=5, minimum size=4cm, draw, black] at (0,0) (t2) {}; 
        \foreach \i in {1,2,...,5}
            \node[vertex] (o\i) at (t2.corner \i) {};

        \draw (i1) to (o1);
        \draw (i5) to (o5);
        \draw (i2) to (o2);
        \draw (i3) to (o4);
        \draw (i4) to (o3);

        \node[vertex] at ($(i1)!0.5!(o1)$) {}; 

	\end{tikzpicture}\qquad
	\begin{tikzpicture}
		\newcommand{\scale}{1.25}	
        \node[regular polygon, regular polygon sides=4, minimum size=\scale*2cm, draw, black] at (0,0) (triangle) {}; 
        \foreach \i in {1,2,...,4}
            \node[vertex] (i\i) at (triangle.corner \i) {};
        \node[regular polygon, regular polygon sides=4, minimum size=\scale*4cm, draw, black] at (0,0) (t2) {}; 
        \foreach \i in {1,2,...,4}
            \node[vertex] (o\i) at (t2.corner \i) {};

        \draw (i1) to (o1);
        \draw (i2) to (o2);
        \draw (i3) to (o3);
        \draw (i4) to (o4);

        \node[vertex] at ($(i1)!0.5!(i2)$) {}; 
        \node[vertex] at ($(i3)!0.5!(o3)$) {}; 
        \node[vertex] at ($(o4)!0.5!(o1)$) {}; 

	\end{tikzpicture}

	\medskip

	\begin{tikzpicture}
		\newcommand{\scale}{4}	
		\node[regular polygon, regular polygon sides=19, minimum size=\scale*1cm, draw, black] at (0,0) (c) {}; 
        \foreach \i in {1,2,...,19}
            \node[vertex] (\i) at (c.corner \i) {};
        \draw (1) to (6);
        \draw (2) to (15);
        \draw (3) to (8);
        \draw (4) to (17);
        \draw (5) to (11);
        \draw (7) to (13);
        \draw (9) to (19);
        \draw (10) to (16);
        \draw (12) to (18);
	\end{tikzpicture}\qquad
	\begin{tikzpicture}[]
		\newcommand{\scale}{4.5}	
		\node[regular polygon, regular polygon sides=27, minimum size=\scale*1cm, draw, black] at (0,0) (c) {}; 
        \foreach \i in {1,2,...,27}
            \node[vertex] (\i) at (c.corner \i) {};
        \draw (1) to (14);
        \draw (2) to (18);
        \draw (3) to (22);
	    \draw (4) to (11);
	    \draw (5) to (15);
	    \draw (6) to (26);
	    \draw (7) to (19);
	    \draw (8) to (23);
	    \draw (9) to (16);
	    \draw (10) to (27);
	    \draw (12) to (20);
	    \draw (13) to (24);
	    \draw (17) to (25);
	\end{tikzpicture}

	\caption{Smallest HIST-critical graphs of girth $4, 5, 6$ and $7$, respectively.
	}\label{fig:HIST-critical_girth_restrictions}
\end{figure}

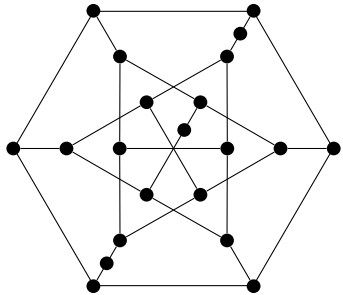
\begin{figure}[!htb]
	\centering
	\begin{tikzpicture}[]
		\newcommand{\scale}{1.4}	
		\node[regular polygon, regular polygon sides=6, minimum size=\scale*1cm] at (0,0) (ci) {}; 
        \foreach \i in {1,2,...,6}
            \node[vertex] (i\i) at (ci.corner \i) {};
        \node[regular polygon, regular polygon sides=6, minimum size=\scale*2cm] at (0,0) (cm) {}; 
        \foreach \i in {1,2,...,6}
            \node[vertex] (m\i) at (cm.corner \i) {};
        \node[regular polygon, regular polygon sides=6, minimum size=\scale*3cm, draw, black] at (0,0) (co) {}; 
        \foreach \i in {1,2,...,6}
            \node[vertex] (o\i) at (co.corner \i) {};

        \foreach \i in {1,2,...,6}
        	\draw (o\i) to (m\i);
    	\draw (m1) to (i2) to (m3) to (i4) to (m5) to (i6) to (m1);
    	\draw (i1) to (m2) to (i3) to (m4) to (i5) to (m6) to (i1);
    	\draw (i1) to (i4);
    	\draw (i2) to (i5);
    	\draw (i3) to (i6);

        \node[vertex] at ($(o1)!0.5!(m1)$) {}; 
        \node[vertex] at ($(i1)!0.3!(i4)$) {}; 
        \node[vertex] at ($(o4)!0.5!(m4)$) {}; 
	\end{tikzpicture}
	\caption{A HIST-critical graph of girth $6$. It is the Pappus graph with three edges subdivided.}\label{fig:pappus}
\end{figure}

\clearpage
\subsection{Certificates for the proof of Proposition~\ref{fragments}.}

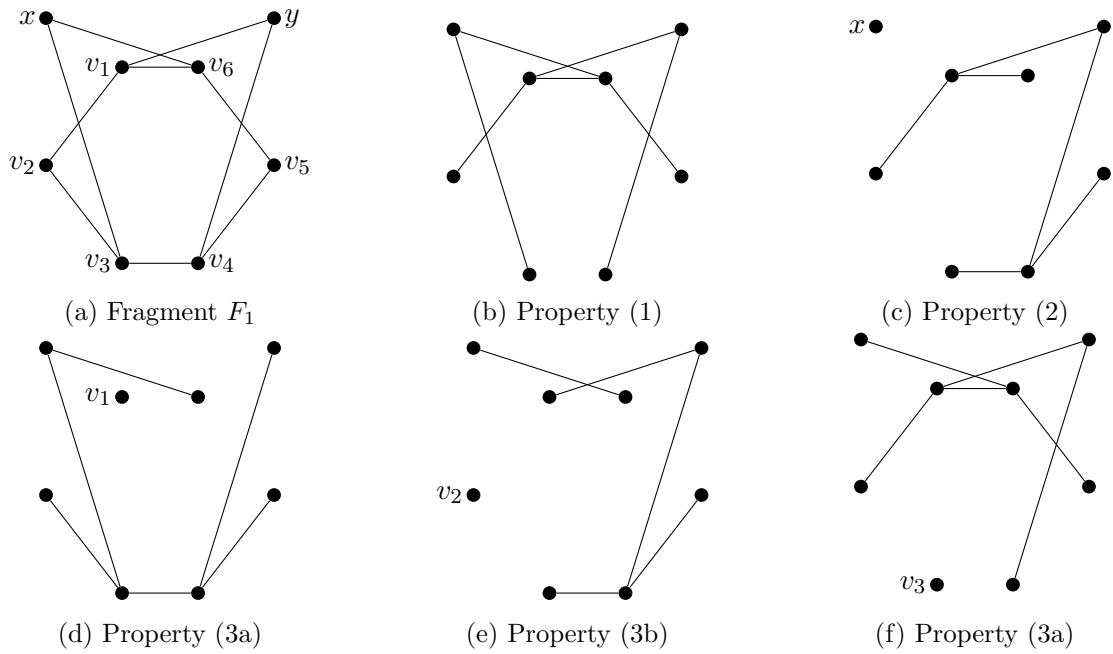
\begin{figure}[!htb]
    \centering
    \newcommand{\y}{1.3}
    \begin{subfigure}[t]{0.32\linewidth}
    \centering
    \begin{tikzpicture}[scale = 0.5]
        \node[vertex] (x) at (-3, 5*\y) {};
        \node[vertex] (y) at (3, 5*\y) {};
        \node[vertex] (1) at (-1, 4*\y) {};
        \node[vertex] (2) at (-3, 2*\y) {};
        \node[vertex] (3) at (-1, 0*\y) {};
        \node[vertex] (4) at (1, 0*\y) {};
        \node[vertex] (5) at (3, 2*\y) {};
        \node[vertex] (6) at (1, 4*\y) {};

         \path[draw]
        (x) edge node {} (3) 
        (x) edge node {} (6) 
        (y) edge node {} (1) 
        (y) edge node {} (4) 
        (1) edge node {} (2) 
        (2) edge node {} (3) 
        (3) edge node {} (4) 
        (4) edge node {} (5) 
        (5) edge node {} (6) 
        (6) edge node {} (1) 
        ;

\coordinate [label=left: {$x$}] () at (x);
\coordinate [label=right: {$y$}] () at (y);
\coordinate [label=left: {$v_1$}] () at (1);
\coordinate [label=left: {$v_2$}] () at (2);
\coordinate [label=left: {$v_3$}] () at (3);
\coordinate [label=right: {$v_4$}] () at (4);
\coordinate [label=right: {$v_5$}] () at (5);
\coordinate [label=right: {$v_6$}] () at (6);
    \end{tikzpicture}
    \caption{Fragment $F_1$}
    \end{subfigure}
    \begin{subfigure}[t]{0.32\linewidth}
    \centering
    \begin{tikzpicture}[scale = 0.5]
        \node[vertex] (x) at (-3, 5*\y) {};
        \node[vertex] (y) at (3, 5*\y) {};
        \node[vertex] (1) at (-1, 4*\y) {};
        \node[vertex] (2) at (-3, 2*\y) {};
        \node[vertex] (3) at (-1, 0*\y) {};
        \node[vertex] (4) at (1, 0*\y) {};
        \node[vertex] (5) at (3, 2*\y) {};
        \node[vertex] (6) at (1, 4*\y) {};

         \path[draw]
        (x) edge node {} (3) 
        (x) edge node {} (6) 
        (y) edge node {} (1) 
        (y) edge node {} (4) 
        (1) edge node {} (2) 
        (5) edge node {} (6) 
        (6) edge node {} (1) 
        ;

    \end{tikzpicture}
    \caption{Property (1)}
    \end{subfigure}
    \begin{subfigure}[t]{0.32\linewidth}
    \centering
    \begin{tikzpicture}[scale = 0.5]
        \node[vertex] (x) at (-3, 5*\y) {};
        \node[vertex] (y) at (3, 5*\y) {};
        \node[vertex] (1) at (-1, 4*\y) {};
        \node[vertex] (2) at (-3, 2*\y) {};
        \node[vertex] (3) at (-1, 0*\y) {};
        \node[vertex] (4) at (1, 0*\y) {};
        \node[vertex] (5) at (3, 2*\y) {};
        \node[vertex] (6) at (1, 4*\y) {};

         \path[draw]
        (y) edge node {} (1) 
        (y) edge node {} (4) 
        (1) edge node {} (2) 
        (3) edge node {} (4) 
        (4) edge node {} (5) 
        (6) edge node {} (1) 
        ;

\coordinate [label=left: {$x$}] () at (x);
    \end{tikzpicture}
    \caption{Property (2)}
    \end{subfigure}
    \begin{subfigure}[t]{0.32\linewidth}
    \centering

    \begin{tikzpicture}[scale = 0.5]
        \node[vertex] (x) at (-3, 5*\y) {};
        \node[vertex] (y) at (3, 5*\y) {};
        \node[vertex] (1) at (-1, 4*\y) {};
        \node[vertex] (2) at (-3, 2*\y) {};
        \node[vertex] (3) at (-1, 0*\y) {};
        \node[vertex] (4) at (1, 0*\y) {};
        \node[vertex] (5) at (3, 2*\y) {};
        \node[vertex] (6) at (1, 4*\y) {};

         \path[draw]
        (x) edge node {} (3) 
        (x) edge node {} (6) 
        (y) edge node {} (4) 
        (2) edge node {} (3) 
        (3) edge node {} (4) 
        (4) edge node {} (5) 
        ;

\coordinate [label=left: {$v_1$}] () at (1);
    \end{tikzpicture}
    \caption{Property (3a)}
    \end{subfigure}
    \begin{subfigure}[t]{0.32\linewidth}
    \centering
    \begin{tikzpicture}[scale = 0.5]
        \node[vertex] (x) at (-3, 5*\y) {};
        \node[vertex] (y) at (3, 5*\y) {};
        \node[vertex] (1) at (-1, 4*\y) {};
        \node[vertex] (2) at (-3, 2*\y) {};
        \node[vertex] (3) at (-1, 0*\y) {};
        \node[vertex] (4) at (1, 0*\y) {};
        \node[vertex] (5) at (3, 2*\y) {};
        \node[vertex] (6) at (1, 4*\y) {};
         \path[draw]
        (x) edge node {} (6) 
        (y) edge node {} (1) 
        (y) edge node {} (4) 
        (3) edge node {} (4) 
        (4) edge node {} (5) 
        ;

\coordinate [label=left: {$v_2$}] () at (2);
    \end{tikzpicture}
    \caption{Property (3b)}
    \end{subfigure}
    \begin{subfigure}[t]{0.32\linewidth}
    \centering
    \begin{tikzpicture}[scale = 0.5]
        \node[vertex] (x) at (-3, 5*\y) {};
        \node[vertex] (y) at (3, 5*\y) {};
        \node[vertex] (1) at (-1, 4*\y) {};
        \node[vertex] (2) at (-3, 2*\y) {};
        \node[vertex] (3) at (-1, 0*\y) {};
        \node[vertex] (4) at (1, 0*\y) {};
        \node[vertex] (5) at (3, 2*\y) {};
        \node[vertex] (6) at (1, 4*\y) {};

         \path[draw]
        (x) edge node {} (6) 
        (y) edge node {} (1) 
        (y) edge node {} (4) 
        (1) edge node {} (2) 
        (5) edge node {} (6) 
        (6) edge node {} (1) 
        ;

\coordinate [label=left: {$v_3$}] () at (3);
    \end{tikzpicture}
    \caption{Property (3a)}
    \end{subfigure}

    \caption{Fragment $F_1$ with properties (2) and (3) defined in Section 3.1.}\label{fig:3-1}
\end{figure}

\begin{figure}[!htb]
    \centering
    \begin{subfigure}[t]{0.32\linewidth}
    \centering
    \begin{tikzpicture}[scale = 0.5]
        \node[vertex] (x) at (-3, 7) {};
        \node[vertex] (y) at (3, 7) {};
        \node[vertex] (1) at (-1, 6) {};
        \node[vertex] (2) at (-3, 3) {};
        \node[vertex] (3) at (-1, 0) {};
        \node[vertex] (4) at (1, 0) {};
        \node[vertex] (5) at (3, 3) {};
        \node[vertex] (6) at (1, 6) {};
        \node[vertex] (7) at (-1, 4) {};
        \node[vertex] (8) at (-1, 2) {};
        \node[vertex] (9) at (1, 2) {};
        \node[vertex] (10) at (1, 4) {};

         \path[draw]
        (x) edge node {} (1) 
        (x) edge node {} (8) 
        (y) edge node {} (6) 
        (y) edge node {} (9) 
        (1) edge node {} (2) 
        (2) edge node {} (3) 
        (3) edge node {} (4) 
        (4) edge node {} (5) 
        (5) edge node {} (6) 
        (6) edge node {} (1) 
        (1) edge node {} (7) 
        (3) edge node {} (8) 
        (4) edge node {} (9) 
        (6) edge node {} (10) 
        (7) edge node {} (9) 
        (8) edge node {} (10) 
        ;

\coordinate [label=left: {$x$}] () at (x);
\coordinate [label=right: {$y$}] () at (y);
\coordinate [label=left: {$v_1$}] () at (1);
\coordinate [label=left: {$v_2$}] () at (2);
\coordinate [label=left: {$v_3$}] () at (3);
\coordinate [label=right: {$v_4$}] () at (4);
\coordinate [label=right: {$v_5$}] () at (5);
\coordinate [label=right: {$v_6$}] () at (6);
\coordinate [label=left: {$v_7$}] () at (7);
\coordinate [label=left: {$v_8$}] () at (8);
\coordinate [label=right: {$v_9$}] () at (9);
\coordinate [label=left: {$v_{10}$}] () at (10);
    \end{tikzpicture}
    \caption{Fragment $F_2$}
    \end{subfigure}
    \begin{subfigure}[t]{0.32\linewidth}
    \centering
    \begin{tikzpicture}[scale = 0.5]
        \node[vertex] (x) at (-3, 7) {};
        \node[vertex] (y) at (3, 7) {};
        \node[vertex] (1) at (-1, 6) {};
        \node[vertex] (2) at (-3, 3) {};
        \node[vertex] (3) at (-1, 0) {};
        \node[vertex] (4) at (1, 0) {};
        \node[vertex] (5) at (3, 3) {};
        \node[vertex] (6) at (1, 6) {};
        \node[vertex] (7) at (-1, 4) {};
        \node[vertex] (8) at (-1, 2) {};
        \node[vertex] (9) at (1, 2) {};
        \node[vertex] (10) at (1, 4) {};

         \path[draw]
        (x) edge node {} (1) 
        (x) edge node {} (8) 
        (y) edge node {} (6) 
        (y) edge node {} (9) 
        (1) edge node {} (2) 
        (5) edge node {} (6) 
        (6) edge node {} (1) 
        (3) edge node {} (8) 
        (4) edge node {} (9) 
        (7) edge node {} (9) 
        (8) edge node {} (10) 
        ;

    \end{tikzpicture}
    \caption{Property (1)}
    \end{subfigure}
    \begin{subfigure}[t]{0.32\linewidth}
    \centering
    \begin{tikzpicture}[scale = 0.5]
        \node[vertex] (x) at (-3, 7) {};
        \node[vertex] (y) at (3, 7) {};
        \node[vertex] (1) at (-1, 6) {};
        \node[vertex] (2) at (-3, 3) {};
        \node[vertex] (3) at (-1, 0) {};
        \node[vertex] (4) at (1, 0) {};
        \node[vertex] (5) at (3, 3) {};
        \node[vertex] (6) at (1, 6) {};
        \node[vertex] (7) at (-1, 4) {};
        \node[vertex] (8) at (-1, 2) {};
        \node[vertex] (9) at (1, 2) {};
        \node[vertex] (10) at (1, 4) {};

         \path[draw]
        (y) edge node {} (6) 
        (y) edge node {} (9) 
        (2) edge node {} (3) 
        (3) edge node {} (4) 
        (4) edge node {} (5) 
        (6) edge node {} (1) 
        (3) edge node {} (8) 
        (4) edge node {} (9) 
        (6) edge node {} (10) 
        (7) edge node {} (9) 
        ;

\coordinate [label=left: {$x$}] () at (x);
    \end{tikzpicture}
    \caption{Property (2)}
    \end{subfigure}
    \begin{subfigure}[t]{0.32\linewidth}
    \centering

    \begin{tikzpicture}[scale = 0.5]
        \node[vertex] (x) at (-3, 7) {};
        \node[vertex] (y) at (3, 7) {};
        \node[vertex] (1) at (-1, 6) {};
        \node[vertex] (2) at (-3, 3) {};
        \node[vertex] (3) at (-1, 0) {};
        \node[vertex] (4) at (1, 0) {};
        \node[vertex] (5) at (3, 3) {};
        \node[vertex] (6) at (1, 6) {};
        \node[vertex] (7) at (-1, 4) {};
        \node[vertex] (8) at (-1, 2) {};
        \node[vertex] (9) at (1, 2) {};
        \node[vertex] (10) at (1, 4) {};

         \path[draw]
        (x) edge node {} (8) 
        (y) edge node {} (6) 
        (y) edge node {} (9) 
        (2) edge node {} (3) 
        (3) edge node {} (4) 
        (4) edge node {} (5) 
        (3) edge node {} (8) 
        (4) edge node {} (9) 
        (7) edge node {} (9) 
        (8) edge node {} (10) 
        ;

\coordinate [label=left: {$v_1$}] () at (1);
    \end{tikzpicture}\qquad
    \caption{Property (3a)}
    \end{subfigure}
    \begin{subfigure}[t]{0.32\linewidth}
    \centering
    \begin{tikzpicture}[scale = 0.5]
        \node[vertex] (x) at (-3, 7) {};
        \node[vertex] (y) at (3, 7) {};
        \node[vertex] (1) at (-1, 6) {};
        \node[vertex] (2) at (-3, 3) {};
        \node[vertex] (3) at (-1, 0) {};
        \node[vertex] (4) at (1, 0) {};
        \node[vertex] (5) at (3, 3) {};
        \node[vertex] (6) at (1, 6) {};
        \node[vertex] (7) at (-1, 4) {};
        \node[vertex] (8) at (-1, 2) {};
        \node[vertex] (9) at (1, 2) {};
        \node[vertex] (10) at (1, 4) {};

         \path[draw]
        (x) edge node {} (8) 
        (y) edge node {} (6) 
        (y) edge node {} (9) 
        (3) edge node {} (4) 
        (4) edge node {} (5) 
        (6) edge node {} (1) 
        (4) edge node {} (9) 
        (6) edge node {} (10) 
        (7) edge node {} (9) 
        ;

\coordinate [label=left: {$v_2$}] () at (2);
    \end{tikzpicture}\qquad
    \caption{Property (3b)}
    \end{subfigure}
    \begin{subfigure}[t]{0.32\linewidth}
    \centering
    \begin{tikzpicture}[scale = 0.5]
        \node[vertex] (x) at (-3, 7) {};
        \node[vertex] (y) at (3, 7) {};
        \node[vertex] (1) at (-1, 6) {};
        \node[vertex] (2) at (-3, 3) {};
        \node[vertex] (3) at (-1, 0) {};
        \node[vertex] (4) at (1, 0) {};
        \node[vertex] (5) at (3, 3) {};
        \node[vertex] (6) at (1, 6) {};
        \node[vertex] (7) at (-1, 4) {};
        \node[vertex] (8) at (-1, 2) {};
        \node[vertex] (9) at (1, 2) {};
        \node[vertex] (10) at (1, 4) {};

         \path[draw]
        (x) edge node {} (1) 
        (x) edge node {} (8) 
        (y) edge node {} (9) 
        (1) edge node {} (2) 
        (5) edge node {} (6) 
        (6) edge node {} (1) 
        (4) edge node {} (9) 
        (6) edge node {} (10) 
        (7) edge node {} (9) 
        ;

\coordinate [label=left: {$v_3$}] () at (3);
    \end{tikzpicture}
    \caption{Property (3b)}
    \end{subfigure}
    \begin{subfigure}[t]{0.32\linewidth}
    \centering
    \begin{tikzpicture}[scale = 0.5]
        \node[vertex] (x) at (-3, 7) {};
        \node[vertex] (y) at (3, 7) {};
        \node[vertex] (1) at (-1, 6) {};
        \node[vertex] (2) at (-3, 3) {};
        \node[vertex] (3) at (-1, 0) {};
        \node[vertex] (4) at (1, 0) {};
        \node[vertex] (5) at (3, 3) {};
        \node[vertex] (6) at (1, 6) {};
        \node[vertex] (7) at (-1, 4) {};
        \node[vertex] (8) at (-1, 2) {};
        \node[vertex] (9) at (1, 2) {};
        \node[vertex] (10) at (1, 4) {};

         \path[draw]
        (x) edge node {} (8) 
        (y) edge node {} (6) 
        (y) edge node {} (9) 
        (2) edge node {} (3) 
        (3) edge node {} (4) 
        (5) edge node {} (6) 
        (6) edge node {} (1) 
        (3) edge node {} (8) 
        (8) edge node {} (10) 
        ;

\coordinate [label=left: {$v_7$}] () at (7);
    \end{tikzpicture}\qquad
    \caption{Property (3b)}
    \end{subfigure}
    \begin{subfigure}[t]{0.32\linewidth}
    \centering
    \begin{tikzpicture}[scale = 0.5]
        \node[vertex] (x) at (-3, 7) {};
        \node[vertex] (y) at (3, 7) {};
        \node[vertex] (1) at (-1, 6) {};
        \node[vertex] (2) at (-3, 3) {};
        \node[vertex] (3) at (-1, 0) {};
        \node[vertex] (4) at (1, 0) {};
        \node[vertex] (5) at (3, 3) {};
        \node[vertex] (6) at (1, 6) {};
        \node[vertex] (7) at (-1, 4) {};
        \node[vertex] (8) at (-1, 2) {};
        \node[vertex] (9) at (1, 2) {};
        \node[vertex] (10) at (1, 4) {};

         \path[draw]
        (x) edge node {} (1) 
        (y) edge node {} (6) 
        (y) edge node {} (9) 
        (1) edge node {} (2) 
        (3) edge node {} (4) 
        (4) edge node {} (5) 
        (6) edge node {} (1) 
        (4) edge node {} (9) 
        (6) edge node {} (10) 
        (7) edge node {} (9) 
        ;

\coordinate [label=left: {$v_8$}] () at (8);
    \end{tikzpicture}
    \caption{Property (3a)}
    \end{subfigure}

    \caption{Fragment $F_2$ with properties (2) and (3) defined in Section 3.1.}\label{fig:3-2}
\end{figure}

\clearpage

\subsection{Number of planar \texorpdfstring{$\bm4$}{4}-connected graphs}\label{app:4-conn_planar}

\begin{table}[!htb]
\centering
\begin{tabular}{c | r}
    Order & \# $4$-conn. planar\\\hline
    6 & 1\\
    7 & 1\\
    8 & 4\\
    9 & 10\\
    10 & 53\\
    11 & 292\\
    12 & 2\,224\\
    13 & 18\,493\\
    14 & 167\,504\\
    15 & 1\,571\,020\\
    16 & 15\,151\,289\\
    17 & 148\,864\,939\\
    18 & 1\,485\,904\,672\\
    19 & 15\,028\,654\,628\\
    20 & 153\,781\,899\,708\\
    21 & 1\,589\,921\,572\,902\\
    22 & 16\,591\,187\,039\,082\\
\end{tabular}
\caption{The number of planar $4$-connected graphs for each given order.}\label{tab:4-conn_planar} 
\end{table}

\clearpage
\subsection{Planar 4-connected graphs with fewest number of HISTs}

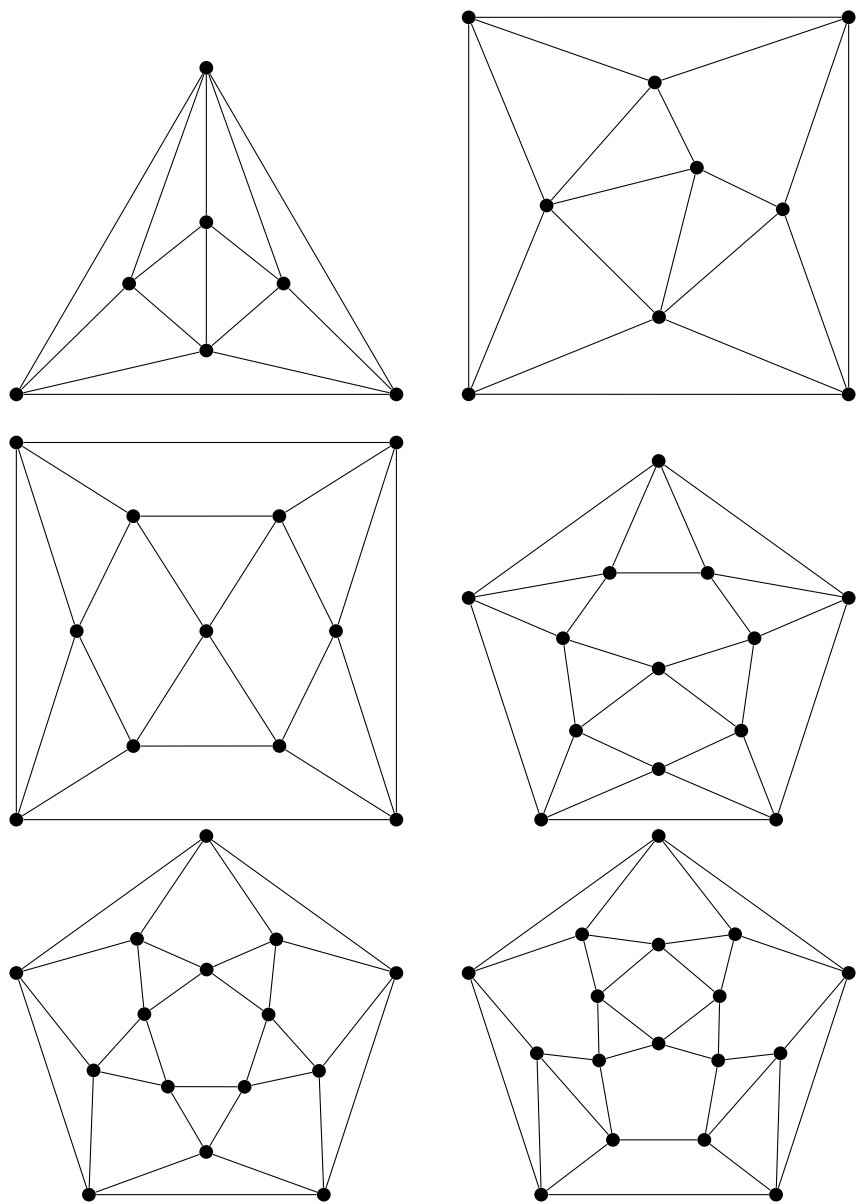
\begin{figure}[!htb]
    \centering

    \begin{tikzpicture}[scale=0.05]
        \definecolor{marked}{rgb}{0.25,0.5,0.25}
        \node [circle,fill,scale=0.5] (7) at (49.981223,18.343974) {};
        \node [circle,fill,scale=0.5] (6) at (99.999999,6.702966) {};
        \node [circle,fill,scale=0.5] (5) at (70.296657,36.105896) {};
        \node [circle,fill,scale=0.5] (4) at (49.981223,52.365753) {};
        \node [circle,fill,scale=0.5] (3) at (29.665789,36.068345) {};
        \node [circle,fill,scale=0.5] (2) at (0.000000,6.702966) {};
        \node [circle,fill,scale=0.5] (1) at (49.981223,93.297033) {};
        \draw [black] (7) to (2);
        \draw [black] (7) to (3);
        \draw [black] (7) to (4);
        \draw [black] (7) to (5);
        \draw [black] (7) to (6);
        \draw [black] (6) to (1);
        \draw [black] (6) to (2);
        \draw [black] (6) to (5);
        \draw [black] (5) to (1);
        \draw [black] (5) to (4);
        \draw [black] (4) to (1);
        \draw [black] (4) to (3);
        \draw [black] (3) to (1);
        \draw [black] (3) to (2);
        \draw [black] (2) to (1);
    \end{tikzpicture}\qquad
    \begin{tikzpicture}[scale=0.05]
        \definecolor{marked}{rgb}{0.25,0.5,0.25}
        \node [circle,fill,scale=0.5] (9) at (48.973395,82.706766) {};
        \node [circle,fill,scale=0.5] (8) at (60.049162,60.121457) {};
        \node [circle,fill,scale=0.5] (7) at (20.517641,50.086754) {};
        \node [circle,fill,scale=0.5] (6) at (82.663388,49.074609) {};
        \node [circle,fill,scale=0.5] (5) at (50.101216,20.503180) {};
        \node [circle,fill,scale=0.5] (4) at (99.985541,99.999999) {};
        \node [circle,fill,scale=0.5] (3) at (0.014458,99.971082) {};
        \node [circle,fill,scale=0.5] (2) at (0.014459,0.028917) {};
        \node [circle,fill,scale=0.5] (1) at (99.985540,0.000000) {};
        \draw [black] (9) to (3);
        \draw [black] (9) to (4);
        \draw [black] (9) to (8);
        \draw [black] (9) to (7);
        \draw [black] (8) to (6);
        \draw [black] (8) to (5);
        \draw [black] (8) to (7);
        \draw [black] (7) to (2);
        \draw [black] (7) to (3);
        \draw [black] (7) to (5);
        \draw [black] (6) to (1);
        \draw [black] (6) to (5);
        \draw [black] (6) to (4);
        \draw [black] (5) to (1);
        \draw [black] (5) to (2);
        \draw [black] (4) to (1);
        \draw [black] (4) to (3);
        \draw [black] (3) to (2);
        \draw [black] (2) to (1);
    \end{tikzpicture}\bigskip

    \begin{tikzpicture}[scale=0.05]
        \definecolor{marked}{rgb}{0.25,0.5,0.25}
        \node [circle,fill,scale=0.5] (11) at (69.228805,19.535239) {};
        \node [circle,fill,scale=0.5] (10) at (69.203268,80.490296) {};
        \node [circle,fill,scale=0.5] (9) at (84.116445,50.000000) {};
        \node [circle,fill,scale=0.5] (8) at (50.000000,50.000000) {};
        \node [circle,fill,scale=0.5] (7) at (30.771194,80.464760) {};
        \node [circle,fill,scale=0.5] (6) at (30.796731,19.509703) {};
        \node [circle,fill,scale=0.5] (5) at (15.883554,50.000000) {};
        \node [circle,fill,scale=0.5] (4) at (99.999999,0.000000) {};
        \node [circle,fill,scale=0.5] (3) at (99.999999,99.999999) {};
        \node [circle,fill,scale=0.5] (2) at (0.000000,99.999999) {};
        \node [circle,fill,scale=0.5] (1) at (0.000000,0.000000) {};
        \draw [black] (11) to (6);
        \draw [black] (11) to (8);
        \draw [black] (11) to (9);
        \draw [black] (11) to (4);
        \draw [black] (10) to (3);
        \draw [black] (10) to (9);
        \draw [black] (10) to (8);
        \draw [black] (10) to (7);
        \draw [black] (9) to (3);
        \draw [black] (9) to (4);
        \draw [black] (8) to (6);
        \draw [black] (8) to (7);
        \draw [black] (7) to (2);
        \draw [black] (7) to (5);
        \draw [black] (6) to (1);
        \draw [black] (6) to (5);
        \draw [black] (5) to (1);
        \draw [black] (5) to (2);
        \draw [black] (4) to (1);
        \draw [black] (4) to (3);
        \draw [black] (3) to (2);
        \draw [black] (2) to (1);
    \end{tikzpicture}\qquad
    \begin{tikzpicture}[scale=0.05]
        \definecolor{marked}{rgb}{0.25,0.5,0.25}
        \node [circle,fill,scale=0.5] (13) at (50.000000,15.859953) {};
        \node [circle,fill,scale=0.5] (12) at (28.255529,26.031944) {};
        \node [circle,fill,scale=0.5] (11) at (71.744470,26.056513) {};
        \node [circle,fill,scale=0.5] (10) at (50.000000,42.518429) {};
        \node [circle,fill,scale=0.5] (9) at (24.815726,50.552827) {};
        \node [circle,fill,scale=0.5] (8) at (75.208844,50.552827) {};
        \node [circle,fill,scale=0.5] (7) at (62.874691,67.874693) {};
        \node [circle,fill,scale=0.5] (6) at (37.125308,67.874693) {};
        \node [circle,fill,scale=0.5] (5) at (0.000000,61.240786) {};
        \node [circle,fill,scale=0.5] (4) at (19.090910,2.444720) {};
        \node [circle,fill,scale=0.5] (3) at (80.909089,2.444720) {};
        \node [circle,fill,scale=0.5] (2) at (99.999999,61.240786) {};
        \node [circle,fill,scale=0.5] (1) at (50.000000,97.555279) {};
        \draw [black] (13) to (3);
        \draw [black] (13) to (4);
        \draw [black] (13) to (12);
        \draw [black] (13) to (11);
        \draw [black] (12) to (4);
        \draw [black] (12) to (9);
        \draw [black] (12) to (10);
        \draw [black] (11) to (3);
        \draw [black] (11) to (10);
        \draw [black] (11) to (8);
        \draw [black] (10) to (8);
        \draw [black] (10) to (9);
        \draw [black] (9) to (5);
        \draw [black] (9) to (6);
        \draw [black] (8) to (2);
        \draw [black] (8) to (7);
        \draw [black] (7) to (1);
        \draw [black] (7) to (2);
        \draw [black] (7) to (6);
        \draw [black] (6) to (1);
        \draw [black] (6) to (5);
        \draw [black] (5) to (1);
        \draw [black] (5) to (4);
        \draw [black] (4) to (3);
        \draw [black] (3) to (2);
        \draw [black] (2) to (1);
    \end{tikzpicture}
    \begin{tikzpicture}[scale=0.05]
        \definecolor{marked}{rgb}{0.25,0.5,0.25}
        \node [circle,fill,scale=0.5] (15) at (20.316025,35.393553) {};
        \node [circle,fill,scale=0.5] (14) at (33.691496,50.322439) {};
        \node [circle,fill,scale=0.5] (13) at (39.868906,31.093184) {};
        \node [circle,fill,scale=0.5] (12) at (49.965178,13.772220) {};
        \node [circle,fill,scale=0.5] (11) at (19.096909,2.449041) {};
        \node [circle,fill,scale=0.5] (10) at (0.000000,61.223283) {};
        \node [circle,fill,scale=0.5] (9) at (31.749272,70.269295) {};
        \node [circle,fill,scale=0.5] (8) at (50.077707,62.137506) {};
        \node [circle,fill,scale=0.5] (7) at (66.368600,50.215836) {};
        \node [circle,fill,scale=0.5] (6) at (60.062367,31.045984) {};
        \node [circle,fill,scale=0.5] (5) at (80.901999,2.446682) {};
        \node [circle,fill,scale=0.5] (4) at (49.999980,97.553317) {};
        \node [circle,fill,scale=0.5] (3) at (68.408378,70.139536) {};
        \node [circle,fill,scale=0.5] (2) at (79.644587,35.268528) {};
        \node [circle,fill,scale=0.5] (1) at (99.999999,61.213892) {};
        \draw [black] (15) to (11);
        \draw [black] (15) to (10);
        \draw [black] (15) to (14);
        \draw [black] (15) to (13);
        \draw [black] (14) to (9);
        \draw [black] (14) to (8);
        \draw [black] (14) to (13);
        \draw [black] (13) to (6);
        \draw [black] (13) to (12);
        \draw [black] (12) to (5);
        \draw [black] (12) to (11);
        \draw [black] (12) to (6);
        \draw [black] (11) to (10);
        \draw [black] (11) to (5);
        \draw [black] (10) to (4);
        \draw [black] (10) to (9);
        \draw [black] (9) to (4);
        \draw [black] (9) to (8);
        \draw [black] (8) to (3);
        \draw [black] (8) to (7);
        \draw [black] (7) to (2);
        \draw [black] (7) to (6);
        \draw [black] (7) to (3);
        \draw [black] (6) to (2);
        \draw [black] (5) to (1);
        \draw [black] (5) to (2);
        \draw [black] (4) to (1);
        \draw [black] (4) to (3);
        \draw [black] (3) to (1);
        \draw [black] (2) to (1);
    \end{tikzpicture}\qquad
    \begin{tikzpicture}[scale=0.05]
        \definecolor{marked}{rgb}{0.25,0.5,0.25}
        \node [circle,fill,scale=0.5] (17) at (33.921715,55.084041) {};
        \node [circle,fill,scale=0.5] (16) at (34.323418,38.032596) {};
        \node [circle,fill,scale=0.5] (15) at (49.989518,42.524167) {};
        \node [circle,fill,scale=0.5] (14) at (66.060042,55.084153) {};
        \node [circle,fill,scale=0.5] (13) at (49.988132,68.759127) {};
        \node [circle,fill,scale=0.5] (12) at (29.861993,71.479931) {};
        \node [circle,fill,scale=0.5] (11) at (0.000000,61.224779) {};
        \node [circle,fill,scale=0.5] (10) at (17.955129,39.947373) {};
        \node [circle,fill,scale=0.5] (9) at (37.975701,16.972358) {};
        \node [circle,fill,scale=0.5] (8) at (65.679167,38.036175) {};
        \node [circle,fill,scale=0.5] (7) at (70.121524,71.504002) {};
        \node [circle,fill,scale=0.5] (6) at (50.000343,97.552155) {};
        \node [circle,fill,scale=0.5] (5) at (19.094069,2.447988) {};
        \node [circle,fill,scale=0.5] (4) at (62.026863,16.973241) {};
        \node [circle,fill,scale=0.5] (3) at (82.050829,39.946135) {};
        \node [circle,fill,scale=0.5] (2) at (99.999999,61.229587) {};
        \node [circle,fill,scale=0.5] (1) at (80.900805,2.447844) {};
        \draw [black] (17) to (15);
        \draw [black] (17) to (16);
        \draw [black] (17) to (12);
        \draw [black] (17) to (13);
        \draw [black] (16) to (9);
        \draw [black] (16) to (10);
        \draw [black] (16) to (15);
        \draw [black] (15) to (14);
        \draw [black] (15) to (8);
        \draw [black] (14) to (8);
        \draw [black] (14) to (13);
        \draw [black] (14) to (7);
        \draw [black] (13) to (12);
        \draw [black] (13) to (7);
        \draw [black] (12) to (11);
        \draw [black] (12) to (6);
        \draw [black] (11) to (6);
        \draw [black] (11) to (10);
        \draw [black] (11) to (5);
        \draw [black] (10) to (9);
        \draw [black] (10) to (5);
        \draw [black] (9) to (5);
        \draw [black] (9) to (4);
        \draw [black] (8) to (3);
        \draw [black] (8) to (4);
        \draw [black] (7) to (2);
        \draw [black] (7) to (6);
        \draw [black] (6) to (2);
        \draw [black] (5) to (1);
        \draw [black] (4) to (1);
        \draw [black] (4) to (3);
        \draw [black] (3) to (1);
        \draw [black] (3) to (2);
        \draw [black] (2) to (1);
    \end{tikzpicture}
 
    \caption{The planar $4$-connected graphs of odd order $n$ attaining the minimum number of HISTs for each order up to order $17$.}\label{fig:4-conn_planar_attaining_min_HISTs}
\end{figure}


\begin{thebibliography}{99}

\bibitem{ABHT90}
M.O. Albertson, D.M. Berman, J.P. Hutchinson, and C. Thomassen.
Graphs with Homeomorphically Irreducible Spanning Trees.
\emph{J. Graph Theory} \textbf{14} (1990) 247--258.

\bibitem{AMW97}
R.E.L. Aldred, B.D. McKay, and N.C. Wormald. Small Hypohamiltonian Graphs.
\emph{J. Combin. Math. Combin. Comput.} \textbf{23} (1997) 143--152.

\bibitem{Br22}
G. Brinkmann.
A practical algorithm for the computation of the genus.
\emph{Ars Math. Contemp.} \textbf{22}(4) (2022) \#P4.06.

\bibitem{BM07}
G. Brinkmann and B.D. McKay.
Fast generation of planar graphs.
\emph{MATCH Commun. Math. Comput. Chem.} \textbf{58} (2007) 323--357.

\bibitem{CRS12}
G. Chen, H. Ren, and S. Shan.
Homeomorphically irreducible spanning trees in locally connected graphs.
\emph{Combin. Prob. Comput.} \textbf{21} (2012) 107--111.

\bibitem{CS13}
G. Chen and S. Shan.
Homeomorphically irreducible spanning trees.
\emph{J. Combin. Theory Ser. B} \textbf{103} (2013) 409--414.

\bibitem{Ch73}
V. Chv\'atal.
Flip-Flops in Hypohamiltonian Graphs.
\emph{Can. Math. Bull.} \textbf{16}(1) (1973) 33--41.

\bibitem{CDG23}
K. Coolsaet, S. D'hondt, and J. Goedgebeur.
House of Graphs 2.0: A database of interesting graphs and more.
\emph{Discrete Appl. Math.} \textbf{325} (2023) 97--107.
Available at \url{https://houseofgraphs.org/}.

\bibitem{GNRZ23}
J. Goedgebeur, K. Noguchi, J. Renders, and C.T. Zamfirescu.
HistChecker (Version 1) [Computer software].
Available at \url{https://github.com/JarneRenders/HistChecker}.

\bibitem{Gr74}
B. Gr\"unbaum.
Vertices missed by longest paths or circuits.
\emph{J. Combin. Theory Ser. A} \textbf{17} (1974) 31--38.

\bibitem{HNO18}
A. Hoffmann-Ostenhof, K. Noguchi, and K. Ozeki.
On homeomorphically irreducible spanning trees in cubic graphs.
\emph{J. Graph Theory} \textbf{89} (2018) 93--100.

\bibitem{Ja74}
F. Jaeger.
On vertex-induced forests in cubic graphs.
In: \emph{Proc. 5th Southeastern Conf., Congr. Numer.} (1974) 501--512.

\bibitem{Jo82}
P. Joffe.
Some properties of 3-polytopal graphs.
Ph.D.~dissertation, City University of New York, New York, NY, USA (1982).

\bibitem{KR95}
S. Kapoor and H. Ramesh.
Algorithms for enumerating all spanning trees of undirected and weighted graphs.
\emph{SIAM J. Comput.} \textbf{24}(2) (1995) 247--265.

\bibitem{Ma79}
J. Malkevitch.
Spanning trees in polytopal graphs.
\emph{Ann. New York Acad. Sci.} \textbf{319} (1979) 362--367.

\bibitem{MP14}
B.D. McKay and A. Piperno.
Practical graph isomorphism, II.
\emph{J. Symb. Comput.} \textbf{60} (2014) 94--112.
 
\bibitem{NT18}
R. Nomura and S. Tsuchiya.
Plane graphs without homeomorphically irreducible spanning trees.
\emph{Ars Combin.} \textbf{141} (2018) 157--165.


\bibitem{PS75}
C. Payan and M. Sakarovitch.
Ensembles cycliquement stables et graphes cubiques. (French)
\emph{Cah. Cent. \'{E}tud. Rech. Op\'{e}r.} \textbf{17} (1975) 319--343.

\bibitem{Th74}
C. Thomassen.
On hypohamiltonian graphs.
\emph{Discrete Math.} \textbf{10} (1974) 383--390.

\bibitem{Th76}
C. Thomassen.
Planar and infinite hypohamiltonian and hypotraceable graphs.
\textit{Discrete Math.} \textbf{14} (1976) 377--389.

\bibitem{Th78}
C. Thomassen. 
Hypohamiltonian graphs and digraphs. In: \textit{Proc. Internat. Conf. Theory and Appl. of Graphs, Kalamazoo}, 1976, LNCS 642, Springer, Berlin (1978) 557--571.

\bibitem{Tu56}
W.T. Tutte.
A theorem on planar graphs.
\emph{Trans. Amer. Math. Soc.} \textbf{82} (1956) 99--116.

%
\bibitem{Za15}
C.T. Zamfirescu.
On Hypohamiltonian and Almost Hypohamiltonian Graphs.
\textit{J. Graph Theory} \textbf{79} (2015) 63--81.

\end{thebibliography}
\end{document}